\documentclass[11pt]{amsart}

\author{Aitor Azemar}
\address{School of Mathematics and Statistics, University of Glasgow, University Place, Glasgow G12 8QQ}
\email{Aitor.Azemar@glasgow.ac.uk}

\usepackage[top=25mm, bottom=25mm, left=30mm, right=30mm]{geometry} 
\usepackage{amssymb}

\newcommand{\E}{\mathbb{E}}
\renewcommand{\P}{\mathbb{P}}
\newcommand{\R}{\mathbb{R}}
\newcommand{\Z}{\mathbb{Z}}
\newcommand{\N}{\mathbb{N}}
\renewcommand{\S}{\mathbb{S}}

\newcommand{\inte}{\operatorname{int}}
\renewcommand{\d}{\operatorname{d}}
\newcommand{\cl}{\operatorname{cl}}
\newcommand{\nsp}{M_\infty}
\newcommand{\fsp}{M_F}
\newcommand{\BM}{D_M}


\newtheorem{theorem}{Theorem}[section]
\newtheorem{proposition}[theorem]{Proposition}
\newtheorem{lemma}[theorem]{Lemma}
\newtheorem{corollary}[theorem]{Corollary}

\theoremstyle{definition}
\newtheorem{definition}[theorem]{Definition}

\theoremstyle{remark}

\numberwithin{equation}{section}

\begin{document}

\title{Random walks on convergence groups}


\begin{abstract}
		We extend some properties of random walks on hyperbolic groups to random walks on convergence groups. In particular we prove that if a convergence group $G$ acts on a compact metrizable space $M$ with the convergence property then we can provide $G\cup M$ with a compact topology such that random walks on $G$ converge almost surely to points in $M$. Furthermore we prove that if $G$ is finitely generated and the random walk has finite entropy and finite logarithmic moment with respect to the word metric, then $M$, with the corresponding hitting measure, can be seen as a model for the Poisson boundary of $G$.
\end{abstract}

\maketitle

\section{Introduction}

Consider a countable group $G$ equipped with a probability measure $\mu$. We can define a random walk on $G$ by fixing a starting point and successively multiplying it by independent elements of $G$ according to the probability $\mu$. That is, by fixing a starting point $g_0$ and successively multiplying it with elements $g_i$ chosen independently according to the distribution $\mu$, thereby arriving at some element 
$$w_n:=g_0g_1\ldots g_n$$
after $n$ steps. In this paper we will study the asymptotic behaviour of such processes for a class of hyperbolic-like groups.

In the case where $G$ is a $\delta$-hyperbolic group, we can embed of $G$ into the compact space $G\cup \partial G$, where $\partial G$ is the Gromov boundary of $G$. Kaimonovich showed in \cite{Kaimonovich2} that, under mild assumptions on the measure $\mu$, the sample paths $(w_n)$ converge almost surely to points in the Gromov boundary. Furthermore, he showed that $\partial G$, together with the corresponding hitting measure $\nu$, form a model for the \emph{Poisson boundary} of $(G,\mu)$ (See \cite{Kaimonovich3} for a formal definition of this concept). That is, ($\partial G$, $\nu$) seen as a measure space encodes all the asymptotically relevant information regarding the sample paths. Similar results have been proved for many hyperbolic-like groups (see, for example, \cite{Tiozzo1} and \cite{Kaimonovich}).

We extend these results to convergence groups. Roughly speaking, these are groups that act on a space in the same way that hyperbolic groups act on their Gromov boundary. More formally, a \emph{convergence group} is a countable group $G$ acting on a compact metrizable space $M$ in such a way that for every infinite sequence $(g_n)\subset G$ of distinct elements there exists a subsequence $(g_{n_k})$ and points $a,b\in M$ such that $g_{n_k}|_{M\setminus a}$ converges to $b$ locally uniformly. That is, for every compact set $K\subset M\setminus a$ and every open neighbourhood $U$ of $b$ there is and integer $N$ such that $g_{n_k}(K)\subset U$ whenever $n_k>N$. We write that $G$ is a convergence group \emph{on} $M$ if $G$ acts on $M$ as a convergence group. Random walks on convergence groups have been previously studied by Gekhtman, Gerasimov, Potyagailo and Yang in \cite[Section 9]{ilya}.

It is fairly easy to see that hyperbolic groups act as convergence groups on their Gromov boundaries (see \cite{Bowditch} for example) so in this definition $M$ plays the role of the Gromov boundary of the group. Hence, it is reasonable to hope that the aforementioned results about random walks on Gromov hyperbolic groups extend to convergence groups if we replace $\partial G$ by $M$. Indeed, we prove the following.
\begin{theorem}\label{theo:compactification}
	Let $G$ be a discrete convergence group acting on a compact metrizable space $M$. If the action of $G$ is non-elementary and minimal, then there exists a compact topology on $G\cup M$ such that the inclusions $G\hookrightarrow G\cup M$, $M\hookrightarrow G\cup M$ are topological embeddings. For any generating measure $\mu$ on $G$, almost every sample path of the associated random walk converges to a point in $M$.
\end{theorem}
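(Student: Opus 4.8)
The plan is to treat the two assertions separately: first build the compact space $\widehat{G}:=G\sqcup M$, then run a martingale argument for the walk.

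\emph{Constructing the topology.} Since the action is non-elementary and minimal, $M$ coincides with the limit set and is perfect, so it carries non-atomic Borel probability measures; fix such a measure $\lambda$ whose stabilizer in $G$ is trivial. Such $\lambda$ are generic: no infinite subgroup of $G$ fixes a non-atomic measure (apply the convergence property to a sequence of distinct elements tending to infinity; any weak-$*$ sublimit of the pushforwards is a Dirac mass, not $\lambda$), while for a fixed nontrivial $g$ the measures with $g_*\lambda=\lambda$ form a nowhere dense set, so Baire category applies (if the action is not faithful one instead takes the intrinsic description of convergence given below as the definition). Now map $\widehat G$ into $\operatorname{Prob}(M)$ by $g\mapsto g_*\lambda$ on $G$ and $\xi\mapsto\delta_\xi$ on $M$, and give $\widehat G$ the pullback of the weak-$*$ topology on the closure of the image. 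The map is injective (by the choice of $\lambda$, and since each $g_*\lambda$ is non-atomic whereas each $\delta_\xi$ is not) and $\operatorname{Prob}(M)$ is compact metrizable, so $\widehat G$ is compact metrizable once the image is shown to be closed with the right structure. The crucial computation is: if $g_n\to\infty$ in $G$ then, by the convergence property, every subsequence has a sub-subsequence with $g_{n_k}|_{M\setminus\{a\}}\to b$ locally uniformly, and since $\lambda(\{a\})=0$ this forces $(g_{n_k})_*\lambda\to\delta_b$; hence the only limit points of $\{g_*\lambda:g\in G\}$ outside itself are Dirac masses (and, by minimality, all of them occur, so $G$ is in fact dense). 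Therefore the closure is exactly $\{g_*\lambda:g\in G\}\sqcup\{\delta_\xi:\xi\in M\}$: on the $M$-part this is the original topology of $M$, $M$ is closed, and $G$ is open and discrete (an accumulating sequence of distinct elements must tend to infinity in $G$ and then converges into $M$), so both inclusions are topological embeddings. The same computation shows that ``$g_n\to\xi$ in $\widehat G$'' has an intrinsic meaning: every subsequence of $(g_n)$ has a sub-subsequence $g_{n_k}$ with $g_{n_k}|_{M\setminus\{a\}}\to\xi$ locally uniformly for some $a\in M$; in particular the topology does not depend on $\lambda$.

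\emph{Convergence of the walk.} Let $\mu$ be a generating measure and $w_n=g_1\cdots g_n$. First, $w_n\to\infty$ in $G$ almost surely: a non-elementary convergence group contains a rank-two free subgroup (ping-pong on high powers of two loxodromic elements with disjoint fixed-point pairs), so it is non-amenable, and a random walk driven by a generating measure on a non-amenable group is transient. Second, by Furstenberg's lemma the compact convex set $\operatorname{Prob}(M)$ carries a $\mu$-stationary measure $\nu$, and $\nu$ is non-atomic: otherwise the finite set of points of maximal $\nu$-mass would be $G$-invariant, which non-elementarity and minimality forbid (no finite orbit). Now $(w_n)_*\nu$ is a $\operatorname{Prob}(M)$-valued martingale for the natural filtration, since stationarity gives $\E[(w_{n+1})_*\nu\mid\mathcal F_n]=(w_n)_*(\mu*\nu)=(w_n)_*\nu$, so it converges weak-$*$ almost surely to a random limit $\nu_\infty$. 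Since $w_n\to\infty$, applying the convergence property along an arbitrary subsequence gives $w_{n_k}|_{M\setminus\{a\}}\to b$ locally uniformly, hence $(w_{n_k})_*\nu\to\delta_b$ (using $\nu(\{a\})=0$); as $\nu_\infty$ already exists, all these subsequential limits coincide, so $\nu_\infty=\delta_Z$ for a well-defined random point $Z\in M$. By the intrinsic description of the topology, $\nu_\infty=\delta_Z$ says precisely that $w_n\to Z$ in $\widehat G$, which is the claim.

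\emph{The main obstacle.} The technical heart is the first part: showing that the embedded copy of $G$ has closure exactly $G\sqcup M$ carrying the claimed topology (Hausdorffness, that $M$ is neither altered nor collapsed, that $G$ remains open and discrete) and that this is independent of all choices — precisely where non-elementarity (perfectness of $M$, existence of $\lambda$, absence of finite orbits, density of loxodromic fixed points) and minimality ($M$ equals the limit set) are used. Granting the compactification, the convergence statement is a short martingale argument whose only external input is transience of the walk, itself a consequence of non-amenability.
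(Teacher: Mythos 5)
Your proof is correct, but it takes a genuinely different route from the paper's on both halves. For the compactification, the paper pulls back Tukia's topology on $T\cup M$ (the space of distinct triples with $M$ glued on via the sets $\widetilde{U}$) through an orbit map $g\mapsto gx$ with $x\in T$, and checks basepoint-independence and compactness directly from the convergence property; you instead realize $G\cup M$ as the weak-$*$ closure of the orbit $\{g_*\lambda\}$ inside $\operatorname{Prob}(M)$, the classical Furstenberg-style compactification. The two topologies coincide, since both are characterized by ``every subsequence of $(g_n)$ has a locally uniformly convergent sub-subsequence with attracting point $\xi$.'' Your construction pays a small price in the choice of $\lambda$: when the (finite) kernel of the action is nontrivial, no $\lambda$ has trivial stabilizer, and your fallback of ``taking the intrinsic convergence as the definition'' should be phrased as an honest base of neighbourhoods (declare all subsets of $G$ open and take preimages of weak-$*$ neighbourhoods of $\delta_\xi$), rather than a convergence class --- this is essentially what the paper's $\widetilde{U}_G$ does. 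For the convergence of the walk, the paper quotes \cite{ilya} for non-atomicity of $\nu$ and for the $\mu$-boundary property ($w_n\nu\to\delta_p$ a.s.) and then runs the contradiction argument of Proposition \ref{prop:convtobound}, disposing of bounded subsequences via non-atomicity; you rederive the $\mu$-boundary property from scratch with the martingale argument, but need the extra input that $w_n\to\infty$ in $G$, obtained from transience via non-amenability (free subgroup by ping-pong). That is a correct and standard fact, but it is an external ingredient the paper's argument avoids. Your non-atomicity argument is also fine, provided you note that a finite invariant set of size at least $3$ is excluded by minimality (non-elementarity alone only rules out sizes $1$ and $2$). Net effect: your proof is more self-contained at the cost of invoking transience and a Baire-category selection of $\lambda$, while the paper's version is shorter and reuses the triple-space machinery it needs later for the Poisson boundary.
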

By \emph{non-elementary} action we mean that there is no invariant subset consisting of 1 or 2 points.
To prove the above result, we use a construction of Tukia from \cite{Tukia} which consists in observing that, just as $M$ is the analogue of the Gromov boundary, the space of distinct triples 
$$T=\{(a,b,c)\in M^3\mid a\neq b\neq c\neq a\}$$
is the analogue of the hyperbolic space upon which $G$ acts.
Generalizing from the case of Kleinian groups, Tukia gives a compact topology on $T\cup M$ and from here we get, in section \ref{sect:sectcompact}, a compact topology on $G\cup M$. 
To see that the random walk converges to the boundary we use similar methods as in \cite{Kaimonovich2}. 

Our next main result is seeing that $(M,\nu)$ works as a model for the Poisson boundary of $(G,\mu)$ in some cases. Specifically, we prove the following.

\begin{theorem}\label{theo:poissboundary}
	Let $G$ be a non-elementary, minimal, finitely generated convergence group on a compact, metrizable space $M$, and $\mu$ a probability measure generating $G$ with finite entropy and finite logarithmic moment with respect to the word metric. Then $(M,\nu)$ is the Poisson boundary of $(G,\mu)$, where $\nu$ is the $\mu$-stationary Borel probability measure on $M$.
\end{theorem}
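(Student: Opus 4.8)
The plan is to apply Kaimanovich's entropy (strip) criterion from \cite{Kaimonovich3}: since $\nu$ is a $\mu$-stationary measure on the compact metrizable $G$-space $M$, the measure space $(M,\nu)$ is automatically a $\mu$-boundary (a quotient of the Poisson boundary), so it suffices to show it is \emph{maximal}. Because $\mu$ has finite entropy and finite logarithmic moment with respect to the word metric, the strip criterion reduces the problem to constructing, for $\nu\times\check\nu$-almost every pair of boundary points $(\xi^-,\xi^+)\in M\times M$, a ``strip'' $S(\xi^-,\xi^+)\subset G$ which is $G$-equivariant in the sense that $gS(\xi^-,\xi^+)=S(g\xi^-,g\xi^+)$, and which grows subexponentially along the random walk: $\frac{1}{n}\log\#\bigl(S(w_n^{-1}\xi^-,\xi^+)\cap B_{\mathrm{word}}(e,R)\bigr)\to 0$ for each fixed $R$, or more precisely so that the relevant sum is finite. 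First I would record that, by Theorem 1.2 applied to both $\mu$ and the reflected measure $\check\mu$, almost every sample path converges in the compactification $G\cup M$ to a point in $M$, the hitting measure $\nu$ is the unique $\mu$-stationary measure, and $\nu$-almost every point of $M$ is a conical limit point — I would want to establish (or cite from the body of the paper) that non-elementarity forces $\nu$ to be non-atomic and supported on conical limit points.

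The heart of the argument is the construction of the strips, and this is where I expect the main obstacle to lie. In the hyperbolic case one takes $S(\xi^-,\xi^+)$ to be (the $G$-orbit points within bounded distance of) a geodesic from $\xi^-$ to $\xi^+$; here there is no hyperbolic metric, so one must substitute the action on the space of distinct triples $T=\{(a,b,c)\in M^3\mid a\neq b\neq c\neq a\}$ that is already in play via Tukia's construction. Fixing a basepoint $o\in T$, I would define $S(\xi^-,\xi^+)=\{g\in G\mid go\text{ is ``between'' }\xi^-\text{ and }\xi^+\}$, where ``between'' means that $go$, viewed in the compactification $T\cup M$, lies in a prescribed compact neighbourhood of the set of triples having $\xi^-$ and $\xi^+$ among their coordinates — this is the convergence-group analogue of the set of orbit points near a geodesic. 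The properties to verify are: (i) equivariance, which is immediate from the definition; (ii) non-emptiness and, in fact, $\nu\times\check\nu$-almost-sure infiniteness of the strips, using that $\xi^-\neq\xi^+$ almost surely (again non-elementarity and non-atomicity of $\nu$) together with the convergence dynamics, which forces infinitely many orbit points $go$ to accumulate on both $\xi^-$ and $\xi^+$ and hence to pass through the ``between'' region; and (iii) the sub-exponential growth estimate, which is the genuinely hard point. For (iii) I would exploit that the word metric is quasi-isometric to an orbit metric on $G$ (Švarc–Milnor is unavailable, but $G$ is finitely generated so the word metric comparison to orbit displacement in $T$ is the substitute), and show that a uniform ball in the word metric meets each strip in a bounded number of elements — because distinct $g,h\in S(\xi^-,\xi^+)$ with $go,ho$ both in the ``between'' region and close in orbit displacement would, via the convergence property, force $g^{-1}h$ to move the whole picture by a controlled amount, bounding the intersection; combined with the sublinear tracking $\frac1n d_{\mathrm{word}}(e,w_n)$ (finite speed, from the finite logarithmic moment, or even just finiteness of the sum in Kaimanovich's criterion), this yields the required estimate.

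To organize the write-up I would proceed as follows: Step 1, recall the strip criterion and reduce to constructing strips with the three properties above. Step 2, use Theorem 1.2 for $\mu$ and $\check\mu$ to get bilateral convergence of the walk to a pair $(\xi^-,\xi^+)\in M\times M$ distributed as $\check\nu\times\nu$ (with $\xi^-\neq\xi^+$ almost surely), and record non-atomicity of $\nu$ and that $\nu$-a.e.\ point is conical. Step 3, fix $o\in T$ and define the strips via the compactification $T\cup M$ of Tukia; verify equivariance and almost-sure non-emptiness/infiniteness using the convergence dynamics near $\xi^\pm$. Step 4, prove the key geometric lemma: for every $R$ there is $C=C(R)$ with $\#\bigl(S(\xi^-,\xi^+)\cap B_{\mathrm{word}}(e,R)\bigr)\le C$ for all $\xi^\pm$, using that the convergence property makes the ``between'' region behave like a bounded-diameter set once one quotients by the $G$-action, so only boundedly many translates $go$ land in it within a word-ball. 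Step 5, combine with finiteness of $\sum_n \mu^{*n}(B_{\mathrm{word}}(e,Rn))$-type estimates coming from finite logarithmic moment to check the convergence condition in the criterion, and conclude. The main obstacle, to repeat, is Step 4: replacing ``points near a geodesic'' by a purely dynamical/topological notion on $T\cup M$ and extracting from the bare convergence property a uniform bound on how many orbit points of a bounded word-length sit in the ``between'' region.
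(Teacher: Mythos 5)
Your proposal takes a genuinely different route from the paper --- running Kaimanovich's strip criterion by hand, with strips defined purely topologically inside the compactification $T\cup M$ --- and it has a genuine gap exactly where you locate the ``main obstacle,'' namely Step 4. The convergence property is a purely topological hypothesis: it yields proper discontinuity of the $G$-action on the space of distinct triples $T$, so compact subsets of $T$ meet only finitely many orbit points, but the ``between region'' attached to a pair $(\xi^-,\xi^+)$ is not compact in $T$, and membership of $g$ in a word-ball $B_{\mathrm{word}}(e,R)$ gives no control on where $go$ sits inside that region, because a priori there is no metric on $T$ relating word length to position. Consequently there is no mechanism in your argument forcing $\#\bigl(S(\xi^-,\xi^+)\cap B_{\mathrm{word}}(e,R)\bigr)$ to grow polynomially (or even suitably subexponentially) in $R$; the assertion that the between region ``behaves like a bounded-diameter set once one quotients by the $G$-action'' is precisely what fails without a metric. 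The counting estimate genuinely requires the metric structure that the paper imports: Sun's hyperbolic path quasimetric $\rho$ on $T$, built from Bowditch's annulus systems, together with a loxodromic WPD element. The paper then does not run the strip criterion directly at all --- it feeds the hyperbolic action on $(S,\rho')\simeq (T,\rho)$ into Maher and Tiozzo's Theorem \ref{theo:poissonacylindrical} (which encapsulates the strip argument), concluding that $\partial T$ with the hitting measure is the Poisson boundary, and the remaining substantial work, which your proposal does not address, is the comparison of $\partial T$ with $M$: the $G$-equivariant map $\phi:\partial T\to M$ of Proposition \ref{prop:phihomeo}, its inverse on the set $M_\infty$ of infinite boundary points, and the fact that the complement $M_F$ has $\nu$-measure zero (Proposition \ref{stickyzero}).

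A secondary issue: your Step 2 records that $\nu$-almost every point of $M$ is a conical limit point as if it were an input, but in this paper that statement is an output of the same machinery (Propositions \ref{prop:conicpointsareinfinitebdpoints} and \ref{stickyzero} use the quasimetric, the loxodromic element supplied by Sun's construction via Proposition \ref{prop:loxodromic}, and the zero-measure lemma of Maher--Tiozzo), so it cannot be invoked before that machinery is in place. If you want to salvage the direct strip-criterion route, the workable version of Step 4 is to define the strips metrically --- orbit points within bounded $\rho$-distance of an $r$-geodesic in $(T,\rho)$ joining the two boundary points --- and to use WPD-type estimates to bound their intersections with balls; but at that point you have essentially reproved the Maher--Tiozzo theorem rather than bypassed it.
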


One of the key ingredients that we use to prove the previous result is a theorem proven by Maher and Tiozzo in \cite{Tiozzo3}. This states that given a hyperbolic space $S$, an action of $G$ on $S$ satisfying certain properties and a measure $\mu$ on $G$ satisfying some conditions, then the Gromov boundary of $S$ together with its hitting measure forms a model for the Poisson boundary of $(G,\mu)$. To apply this result we use a quasimetric $\rho$ on $T$ which makes $(T,\rho)$ quasi-isometric to a hyperbolic space $(S,d)$ upon which $G$ acts satisfying the properties of Maher and Tiozzo's theorem. This quasimetric was introduced by Sun in \cite{binsun}. The other thing we need to apply Maher and Tiozzo's theorem is the restrictions on $\mu$, however, these are automatically satisfied whenever $G$ is finitely generated, and $\mu$ has finite entropy and finite logarithmic moment with respect to the word metric. As $(S,d)$ and $(T,\rho)$ are quasi-isometric we can identify the Gromov boundary of $(T,\rho)$ as the Poisson boundary. However, the Gromov boundary $\partial T$ of $T$ may be a complicated object, and a priori we have no direct way to relate it with $M$. Our final step is to see that $\partial T$ and $M$ together with their corresponding hitting measures are equivalent as $G$-measure spaces. We do this by building a $G$-equivariant homeomorphism between subsets with full measure.

As a corollary of the proof (in particular, of Proposition \ref{stickyzero}) we get an alternative proof of the fact that the set of conical limit points of $M$ introduced by Tukia in \cite{Tukia} has full measure under the stationary measure. A more quantitative statement of this fact has been proven by Gekhtman, Gerasimov, Potyagailo and Yang in \cite[Theorems 9.14 and 9.15]{ilya}.

Many results about a group apparently not related to random walks can be obtained by studying their asymptotic behaviour. 
For example, we say that a function $f:G\to \R$ is $\mu$\emph{-harmonic} if $f(g)=\sum_{h\in G}\mu(h)f(gh)$, that is, if the value at each point is the average (with respect to $\mu$) of the values at neighbouring points.
If $(M,\nu)$ is the Poisson boundary of $(G,\mu)$, then there exists an explicit isomorphism from $L^{\infty}(M,\nu)$ to the space of bounded $\mu$-harmonic functions on $G$. Also, using the convergence of the random walks to $M$, one can show that the action of $G$ on $(M,\nu)$ is \emph{strongly almost transitive}, that is, given any $\epsilon>0$ and $A\subset M$ with $\nu(A)>0$, there exists some $g\in G$ such that $\mu(gA)>1-\epsilon$. Having a non trivial strongly almost transitive action has interesting implications, and we refer to \cite{glasner} for a compilation of some.

\vspace{0.3cm}

I would like to thank Ursula Hamenstädt for introducing me to this topic and helping me thorough the realization of this work, Ilya Gekhtman for pointing out how to generalize the main theorem beyond finitely bounded measures, and Maxime Fortier Bourque for proofreading this paper.

\section{Preliminaries}
\subsection{Hyperbolicity and quasi-metric spaces}

Let $(X,\d)$ be a geodesic metric space, i.e., a metric space such that for any two points $a,b\in X$ there exists a path $[a,b]$ joining them, with length equal to the distance between $a$ and $b$. 
That path may not be unique, and by $[a,b]$ we mean any of them. 
Given a set $A\subset X$ and $r>0$, we will denote by $N(A,r)$ the closed $r$-neighbourhood of $A$, that is, $N(A,r)=\{x\in X\mid \d(x,A)\le r\}$. 
Given $\delta>0$, we say that $X$ is $\delta$-\emph{hyperbolic} if its triangles are $\delta$-slim, meaning that for any three points $a,b,c\in X$ and any three geodesics $[a,b]$, $[b,c]$ and $[c,a]$ we have $[a,b]\subset N([b,c]\cup [c,a], \delta)$. We say that $X$ is \emph{Gromov hyperbolic} if there exists a $\delta \ge 0$ such that it is $\delta$-hyperbolic.

In this paper we will deal with a relaxation of the notion of metric, where we soften the triangle inequality by an additive constant and allow pairs of distinct points to be at distance $0$. At large scales, this notion is indistinguishable from a metric, so many results about hyperbolicity go through. Here is a precise definition:

\begin{definition}
	Given $r\ge 0$, an \emph{$r$-quasimetric} $\rho$ on a set $Q$ is a function $\rho:Q^2\to [0,+\infty)$, satisfying $\rho(x,x)=0$, $\rho(x,y)=\rho(y,x)$ and $\rho(x,y)\le \rho(x,z)+\rho(z,y)+r$ for all $x,y,z\in Q$.
\end{definition}

A \emph{quasimetric} is an $r$-quasimetric for some $r\ge 0$. Given $s\ge 0$ and a quasimetric space $(Q,\rho)$, an \emph{$s$-geodesic segment} is a finite sequence of points $x_0,x_1,\ldots x_n$, such that
\[
|i-j|-s\le \rho(x_i,x_j)\le |i-j|+s
\]
whenever $0\le i,j\le n$.
We will also denote by $[a,b]$ any $s$-geodesic segment between $a$ and $b$, that is, such that $a=x_0$ and $b=x_n$. 
A quasimetric is a \emph{path quasimetric} if there exists $s\ge 0$ such that every pair of points can be connected by an $s$-geodesic segment. 
A path quasimetric is called \emph{hyperbolic} if, taking $s$-geodesic segments instead of geodesics, triangles are $\delta$-slim for some $\delta\ge 0$.
To ease the notation, we will always assume that $r=s=\delta$.

For an introduction on quasimetric space we refer to \cite{Bowditch2}. In there, Bowditch shows that every hyperbolic path quasimetric space is quasi-isometric to a Gromov hyperbolic space. Hence, most results about Gromov hyperbolic spaces extend to hyperbolic path quasimetric spaces. We now detail the ones we are going to use.

Given an hyperbolic path quasimetric space $(Q,\rho)$, and a point $p\in Q$, the \emph{Gromov product} on $X$ is defined by 
$$(x\cdot y)_{p}=\frac{1}{2}(\rho(p,x)+\rho(p,y)-\rho(x,y)).$$
A useful feature of this product is that $(x\cdot y)_{p}$ is equal to the distance between $p$ and any geodesic between $x$ and $y$, up to additive error. That is,
\begin{equation*}
(x\cdot y)_{p}=\rho(p,[x,y])+C(r),\label{eq:distancetogeodesic}
\end{equation*}
where we write $A=B+C(r)$ to mean that the difference between $A$ and $B$ is bounded by a constant which depends only on $r$.

Another important property we will use about the Gromov product is the \emph{reverse triangle inequality}
\[
(x\cdot y)_{p}\ge \min \{(x\cdot z)_{p},(y\cdot z)_{p}\}+C(r).
\]

An $(L,C)$\emph{-quasigeodesic} $\gamma$ is an $(L,C)$-quasi-isometric embedding of an interval $I\subset \R$ into $Q$, that is, such that for all $s$ and $t$ in $I$,
\[
\frac{1}{L}|t-s|-C\le \rho(\gamma(s),\gamma(t))\le L|t-s|+C.
\]

The following important stability result about quasigeodesics is known as Morse Lemma.
\begin{lemma}[Morse Lemma]\label{lemma:morselemma}
	Let $(Q,\rho)$ be a hyperbolic path quasimetric space and $L,C > 0$. There is $D>0$ such that for any two points $x,y\in Q$, any two $(L,C)$-quasigeodesics connecting $x$ and $y$ are contained in $D$-neighbourhoods of each other.
\end{lemma}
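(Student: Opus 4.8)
The plan is to reduce to the classical Morse Lemma for Gromov hyperbolic geodesic metric spaces, which we may invoke via Bowditch's result from \cite{Bowditch2}: there is a Gromov hyperbolic geodesic metric space $(X,\d)$ together with a quasi-isometry $\phi\colon (Q,\rho)\to (X,\d)$. First I would record the elementary fact that composing a quasi-isometric embedding with an $(L,C)$-quasigeodesic again yields a quasigeodesic: writing $\lambda,c$ for the constants of $\phi$, any $(L,C)$-quasigeodesic $\gamma$ in $Q$ is carried by $\phi$ to an $(L',C')$-quasigeodesic $\phi\circ\gamma$ in $X$, with $L',C'$ depending only on $L,C,\lambda,c$. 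One technical nuisance is that $\phi\circ\gamma$ need not be continuous, while the usual formulation of the Morse Lemma is stated for continuous quasigeodesics; this is dealt with by the standard taming step, replacing $\phi\circ\gamma$ by the continuous path that agrees with it on $I\cap\Z$ and interpolates by geodesic segments of $X$ in between. The tamed path is a continuous quasigeodesic, with controlled constants, lying within bounded Hausdorff distance of $\phi\circ\gamma$, all bounds depending only on $L,C,\lambda,c$.

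With this in place, given two $(L,C)$-quasigeodesics $\gamma_1,\gamma_2$ connecting $x$ and $y$ in $Q$, I would tame $\phi\circ\gamma_1$ and $\phi\circ\gamma_2$ and apply the classical Morse Lemma in $(X,\d)$ to obtain a constant $D'$, depending only on the hyperbolicity constant of $X$ and on $L,C,\lambda,c$, such that the tamed images lie in $D'$-neighbourhoods of each other; hence $\phi\circ\gamma_1$ and $\phi\circ\gamma_2$ lie in $D''$-neighbourhoods of each other in $X$ for a slightly larger $D''$. Pulling this back is immediate and needs no coarse surjectivity: if $\d(\phi(a),\phi(\gamma_2(t)))\le D''$ then the lower quasi-isometry inequality for $\phi$ gives $\rho(a,\gamma_2(t))\le \lambda(D''+c)$, so one may take $D=\lambda(D''+c)$. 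Tracking the constants through these reductions shows that $D$ depends only on $L$, $C$, and the quasimetric and hyperbolicity data of $(Q,\rho)$ (all packaged into the single constant we agreed to call $\delta$), as claimed.

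I do not expect any deep obstacle: the only real work is the careful propagation of constants through the two changes of space and the taming step, which is needed precisely because the composition of a continuous quasigeodesic with the possibly discontinuous quasi-isometry $\phi$ may fail to be continuous. If one prefers to stay inside $(Q,\rho)$, there is an alternative self-contained route that mimics the classical divergence argument: first show that every $(L,C)$-quasigeodesic is within bounded Hausdorff distance of any $s$-geodesic segment with the same endpoints — the delicate direction being that the $s$-geodesic lies in a bounded neighbourhood of the quasigeodesic, proved by the usual estimate on the length of a quasigeodesic excursion away from a ball — and then combine two such statements through the triangle inequality for Hausdorff distance, absorbing the quasimetric constant $r$ into the additive errors throughout. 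This avoids citing \cite{Bowditch2} at the cost of repeating a standard computation, so the reduction above seems preferable.
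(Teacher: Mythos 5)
Your proposal is correct and follows essentially the same route as the paper, which simply cites the classical Morse Lemma of \cite[Theorem III.1.7]{spacesnonpositivecurvature} and relies on the reduction, stated just before the lemma, that Bowditch's construction makes every hyperbolic path quasimetric space quasi-isometric to a Gromov hyperbolic space. Your write-up merely makes explicit the standard details (composing with the quasi-isometry, taming for continuity, and pulling back via the lower quasi-isometry inequality) that the paper leaves implicit.
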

A proof for Gromov hyperbolic metric spaces can be found in \cite[Theorem III.1.7]{spacesnonpositivecurvature}.

The \emph{Gromov boundary} of a hyperbolic path quasimetric space $Q$, which we will denote $\partial Q$, can be defined in the same way as it is done for Gromov hyperbolic spaces. That is, given two sequences $(x_n),(y_n)\subset Q$ we say that they are equivalent, and write $(x_n)\sim (y_n)$, if for some (and hence any) $p\in Q$ we have $(x_n,y_n)_p\to\infty$. The Gromov boundary is defined as the equivalence set of sequences $(x_n)\subset Q$ such that $(x_n,x_m)_p\to \infty$ for any $p\in Q$, with the defined relation. 
The Gromov product between two elements of the boundary can be defined by
$$(x\cdot y)_{p}:=\sup \liminf_{m,n\to \infty}(x_m\cdot y_n)_{p},$$
where the supremum is taken over all sequences $(x_m)$, $(y_n)$ related to $x, y$. Furthermore, if $x\in Q$ and $y\in \partial Q$, we can use the same definition replacing the sequence $(x_m)$ by $x$.
A sequence $(x_n)\subset Q \cup \partial Q$ converges to some $y\in \partial Q$ if $(x_n \cdot y)_p$ goes to infinity for some (and hence any) $p$.
As one can see in \cite[Chapter III.3]{spacesnonpositivecurvature}, a quasi-isometry between two hyperbolic spaces induces a homeomorphism between the corresponding Gromov boundaries. The cited proof can be easily extended to quasi-isometries between path quasimetric spaces, geting the analogous result.

For any point $x\in (Q,\rho)$ and $r$-geodesic $[a,b]$, the nearest point projection $p_x$ is well defined up to a constant $K(r)$. An important property of this point is that, for any $y\in [a,b]$, 
\begin{equation}\label{eq:invtriangle}
	\rho(x,y)=\rho (x,p_x)+\rho(p_x,y) +C(r).
\end{equation}
A proof of these facts for $\delta$-hyperbolic spaces, as well as the following proposition, can be found in \cite[Section 3]{Maher}. These proofs can be easily extended to hyperbolic path quasimetric spaces with respect to the $r$-geodesics.

\begin{proposition}\label{prop:doubletriangle}
	Let $r>0$. There exists $M$, depending solely on $r$, such that for any $r-$hyperbolic path quasimetric $(Q,\rho)$, any $r-$geodesic in such space, and any $x,y\in Q$ with nearest points $p_x$ and $p_y$ respectively on $\gamma$ satisfying  $\rho(p_x,p_y)\ge M$,
	$$\rho(x,y)=\rho(x,p_x)+\rho(p_x,p_y)+\rho(p_y,y)+C(r).$$
\end{proposition}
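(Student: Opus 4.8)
The plan is to reduce the double-triangle estimate to the single-triangle estimate \eqref{eq:invtriangle} together with the $\delta$-slimness of $r$-geodesic triangles, so the only real work is bookkeeping additive constants. First I would fix an $r$-geodesic $\gamma$, points $x,y\in Q$, and nearest-point projections $p_x,p_y$ on $\gamma$, and consider the geodesic triangle with vertices $x$, $y$ and (say) $p_x$, using a subsegment of $\gamma$ as the side $[p_x, p_y']$ for a point $p_y'$ on $\gamma$ close to $p_y$ — more precisely, I would work with the triangle formed by an $r$-geodesic $[x,y]$, an $r$-geodesic $[x,p_x]$, and the subsegment of $\gamma$ from $p_x$ to $p_y$ followed by an $r$-geodesic $[p_y,y]$. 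By \eqref{eq:invtriangle} applied at $x$ with the point $p_y\in\gamma$ we get $\rho(x,p_y)=\rho(x,p_x)+\rho(p_x,p_y)+C(r)$, and symmetrically $\rho(y,p_x)=\rho(y,p_y)+\rho(p_y,p_x)+C(r)$. So the content of the proposition is that $\rho(x,y)$ is, up to $C(r)$, equal to $\rho(x,p_x)+\rho(p_x,p_y)+\rho(p_y,y)$, i.e.\ that the concatenated path $x\to p_x\to p_y\to y$ is an efficient route.

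The key step is to show that the Gromov product $(x\cdot y)_{p_x}$ is bounded by a constant depending only on $r$, provided $\rho(p_x,p_y)\ge M$ for $M$ large enough. Indeed, once we know $(x\cdot y)_{p_x}=C(r)$, expanding the definition of the Gromov product gives $\rho(x,y)=\rho(x,p_x)+\rho(p_x,y)+C(r)$, and combining with the single-triangle identity $\rho(p_x,y)=\rho(p_x,p_y)+\rho(p_y,y)+C(r)$ from \eqref{eq:invtriangle} yields exactly the claimed formula. To bound $(x\cdot y)_{p_x}$: recall $(x\cdot y)_{p_x}=\rho(p_x,[x,y])+C(r)$, so I must show that any $r$-geodesic $[x,y]$ stays a uniformly bounded distance away from $p_x$ once the projections are far apart. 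Suppose $[x,y]$ passes within distance $t$ of $p_x$; by $\delta$-slimness of the triangle $x,p_x,y$ together with the fact that $p_x$ is the \emph{nearest} point of $\gamma$ to $x$ and $p_y$ the nearest to $y$, a point on $[x,y]$ near $p_x$ must be near either $[x,p_x]$ or $[p_x, y]$; tracing the slim-triangle inequalities and using that $\rho(p_x, p_y)$ is large forces $t$ to be bounded by a function of $r$ alone. I would organize this as: apply slimness to get a point $q\in[x,y]$ with $\rho(q,p_x)\le \delta + C(r)$; then the two segments $[x,q]$ and $[q,y]$ of $[x,y]$ together with $[x,p_x]$, $[p_x,y]$ show that $p_x$ is within $C(r)$ of a point of $[x,y]$ regardless, which is automatic — the real gain is that going \emph{around} through $p_x$ costs nothing, which is precisely the reverse-triangle / nearest-point-projection argument of \cite[Section 3]{Maher}. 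Concretely I would cite that the projection of $x$ to the segment $[p_x,p_y]\subset\gamma$ is $p_x$ up to $C(r)$ and that of $y$ is $p_y$ up to $C(r)$, and apply \eqref{eq:invtriangle} twice more to the geodesic $[x,y]$.

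The main obstacle I anticipate is making the threshold $M$ genuinely depend only on $r$ and not on the space or the points: this requires being careful that every invocation of slimness, of the Morse Lemma (if quasigeodesic concatenations are used), and of the nearest-point estimate \eqref{eq:invtriangle} contributes a constant of the form $C(r)$, and that the "if $\rho(p_x,p_y)\ge M$'' hypothesis is used exactly once, to guarantee that the possible "backtracking'' near $p_x$ (which could be as large as $\rho(p_x,p_y)$ in the worst case) is in fact impossible. I would handle this by first proving the clean inequality $\rho(x,y)\ge \rho(x,p_x)+\rho(p_x,p_y)+\rho(p_y,y)-C(r)$ unconditionally (this direction only needs \eqref{eq:invtriangle} twice and the triangle inequality for the quasimetric), and then the reverse inequality $\rho(x,y)\le \rho(x,p_x)+\rho(p_x,p_y)+\rho(p_y,y)+C(r)$ is immediate from the quasimetric triangle inequality applied twice with error $2r$. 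Wait — the upper bound is always true; so in fact the \emph{only} thing needing the hypothesis $\rho(p_x,p_y)\ge M$ is the lower bound, and there the argument is: project $x$ and $y$ onto the sub-$r$-geodesic $[p_x,p_y]$, note the projections are $p_x$, $p_y$ up to $C(r)$ by the nearest-point property on $\gamma$, and when $\rho(p_x,p_y)\ge M := 10C(r)$ (say) these projections are distinct enough that Proposition-style reasoning of \cite[Section 3]{Maher} gives $\rho(x,y)\ge\rho(x,p_x)+\rho(p_x,p_y)+\rho(p_y,y)-C(r)$. Assembling the two bounds finishes the proof.
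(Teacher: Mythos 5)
The first thing to note is that the paper does not actually prove this proposition: it states that a proof for $\delta$-hyperbolic metric spaces appears in \cite[Section 3]{Maher} and asserts that it extends to hyperbolic path quasimetric spaces. Your overall architecture is the right one and matches that standard argument: the upper bound is immediate from the quasimetric triangle inequality, the entire content is the lower bound, and you correctly reduce the lower bound to showing $(x\cdot y)_{p_x}\le C(r)$ under the hypothesis $\rho(p_x,p_y)\ge M$, after which \eqref{eq:invtriangle} in the form $\rho(p_x,y)=\rho(p_x,p_y)+\rho(p_y,y)+C(r)$ finishes the computation.

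However, the one step that carries all the content is never established. Your slimness discussion of the triangle $x,p_x,y$ does not close: slimness places $[x,y]$ in a neighbourhood of $[x,p_x]\cup[p_x,y]$, which says nothing about $p_x$ being close to $[x,y]$, and you even observe mid-argument that what you derived is ``automatic.'' You also briefly claim the lower bound holds ``unconditionally,'' which is false (take $x=y$ far from $\gamma$: then $p_x=p_y$ up to $K(r)$ and the inequality would force $0\ge 2\rho(x,p_x)-C(r)$); this is exactly why the hypothesis $\rho(p_x,p_y)\ge M$ is needed. Finally, your fallback --- that ``Proposition-style reasoning of \cite[Section 3]{Maher} gives the lower bound once the projections are far apart'' --- is circular, since that reasoning \emph{is} the proposition. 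The missing step has a short proof from tools already quoted in the paper: by \eqref{eq:invtriangle}, $(x\cdot p_y)_{p_x}=C(r)$ while $(y\cdot p_y)_{p_x}=\rho(p_x,p_y)+C(r)\ge M-C(r)$; the reverse triangle inequality gives $(x\cdot p_y)_{p_x}\ge\min\{(x\cdot y)_{p_x},\,(y\cdot p_y)_{p_x}\}-C(r)$, so once $M$ exceeds the accumulated constants the minimum must be realized by $(x\cdot y)_{p_x}$, forcing $(x\cdot y)_{p_x}\le C(r)$; expanding the Gromov product then yields $\rho(x,y)=\rho(x,p_x)+\rho(p_x,y)+C(r)$ and hence the stated identity. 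With that substitution your proof is complete.
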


If $G$ is a group acting by isometries on $Q$, we say $g\in G$ is a \emph{loxodromic} element if the map $\Z\to Q$, $n\to g^{n}x$ is an $(L(x),C(x))$-quasi-isometric embedding for some (equivalently, any) $x\in Q$, that is, $t\to g^{\left \lfloor{t}\right \rfloor}x$ is a quasi-geodesic. Of interest to us will be the following property of these elements, well known when $Q$ is a proper hyperbolic metric space. 
\begin{proposition}
	\label{prop:loxodromic}
	Let $G$ be a group acting by isometries on a hyperbolic path quasimetric space $(Q,\rho)$, and let $g$ be a loxodromic element. Then there exists $N\in \N$ and $M>0$ such that, for any $n\in \N$, $\inf_{x\in Q}\rho(x,g^{nN}x)\ge nM$.
\end{proposition}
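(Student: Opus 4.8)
\emph{Proof plan.} Fix a basepoint $o\in Q$. Since $g$ is loxodromic, the orbit map $f\colon\Z\to Q$, $f(j)=g^{j}o$, is an $(L,C)$-quasigeodesic for some $L,C$, and its image $\gamma:=\{g^{j}o:j\in\Z\}$ is an \emph{exactly} $g$-invariant subset of $Q$. In particular $\rho(o,g^{k}o)\ge k/L-C$, so $\rho(o,g^{k}o)\to\infty$. The plan is to collapse the infimum over all $y$ to this single basepoint quantity by projecting onto $\gamma$, so the whole argument rests on the following estimate: there are constants $M_{0},C_{0}\ge 0$, depending only on $L$, $C$ and $r$, such that for every $m\ge 1$ and every $y\in Q$,
\[
\rho(o,g^{m}o)\ge M_{0}\quad\Longrightarrow\quad \rho(y,g^{m}y)\ge \rho(o,g^{m}o)-C_{0}.
\]
Granting this, I would finish as follows: pick an integer $N$ with $N/L-C\ge M_{0}$ and $N/(2L)\ge C+C_{0}$. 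For every $n\ge 1$ we have $\rho(o,g^{nN}o)\ge nN/L-C\ge N/L-C\ge M_{0}$, so the estimate applies with $m=nN$ and yields, for all $y$,
\[
\rho(y,g^{nN}y)\ge \rho(o,g^{nN}o)-C_{0}\ge \frac{nN}{L}-C-C_{0}\ge \frac{nN}{L}-\frac{nN}{2L}=\frac{nN}{2L}.
\]
Taking the infimum over $y$ and setting $M:=N/(2L)>0$ proves the proposition.

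To prove the estimate, fix $m\ge 1$ and $y\in Q$ and choose an orbit point $p=g^{j_{0}}o\in\gamma$ with $\rho(y,p)\le\rho(y,\gamma)+1$. Since the powers of $g$ commute and $g^{-j_{0}}$ acts by isometries, $\rho(g^{-j_{0}}y,g^{m}(g^{-j_{0}}y))=\rho(y,g^{m}y)$ and $g^{-j_{0}}\gamma=\gamma$; thus, replacing $y$ by $g^{-j_{0}}y$, I may assume $\rho(y,o)\le\rho(y,\gamma)+1$, and applying the isometry $g^{m}$ and using $g^{m}\gamma=\gamma$ gives $\rho(g^{m}y,g^{m}o)\le\rho(g^{m}y,\gamma)+1$ as well. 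Next I would pick an $r$-geodesic segment $\sigma=[o,g^{m}o]$, which exists because $\rho$ is a path quasimetric. As $\sigma$ and $f([0,m])\subseteq\gamma$ are both $(L',C')$-quasigeodesics (with $L',C'$ depending only on $L,C,r$) joining $o$ and $g^{m}o$, the Morse Lemma (Lemma~\ref{lemma:morselemma}) yields a constant $D=D(L,C,r)$ with $\sigma\subseteq N(\gamma,D)$. Hence for every point $q$ of $\sigma$ we get $\rho(y,q)\ge\rho(y,\gamma)-D-r\ge\rho(y,o)-(1+D+r)$, so the endpoint $o$ of $\sigma$ is a nearest point of $y$ on $\sigma$ up to the additive error $1+D+r$, and symmetrically $g^{m}o$ is a nearest point of $g^{m}y$ on $\sigma$ up to the same error. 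Applying Proposition~\ref{prop:doubletriangle} to the $r$-geodesic $\sigma$, with $y$ and $g^{m}y$ as the two points and $o$, $g^{m}o$ as their (approximate) nearest points — absorbing the bounded error from "approximate" into the constants, which is routine — produces $M_{0}$ such that, whenever $\rho(o,g^{m}o)\ge M_{0}$,
\[
\rho(y,g^{m}y)=\rho(y,o)+\rho(o,g^{m}o)+\rho(g^{m}o,g^{m}y)+C(r)\ge \rho(o,g^{m}o)-C_{0},
\]
with $C_{0}$ depending only on $r$ and $D$. This is the required estimate.

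The substance of the argument is this last reduction. Proposition~\ref{prop:doubletriangle} and \eqref{eq:invtriangle} are stated for $r$-geodesics, whereas the $g$-invariant object we want to project onto, the orbit $\gamma$, is only a quasigeodesic; the fix is to replace it locally by the genuine $r$-geodesic $[o,g^{m}o]$, which the Morse Lemma keeps uniformly close to $\gamma$, and then to work with nearest points up to a controlled additive error. Checking that these error terms remain uniform in $m$ and $y$ is the step I expect to demand the most care, although every ingredient (path quasimetric, Morse Lemma, Proposition~\ref{prop:doubletriangle}) is already available. The one genuinely structural input, and what makes the infimum collapse, is that $\gamma$ is \emph{exactly} $g$-invariant: if $p\in\gamma$ is a nearest point of $y$ then $g^{m}p\in\gamma$ is a nearest point of $g^{m}y$, and $\rho(p,g^{m}p)=\rho(o,g^{m}o)$ with no error whatsoever.
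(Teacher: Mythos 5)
Your argument is correct and follows essentially the same route as the paper's proof: project onto the $g$-invariant orbit quasigeodesic, use the Morse Lemma to transfer to a genuine $r$-geodesic between two orbit points, show the projections of $y$ and $g^{m}y$ land near its endpoints, and conclude with Proposition~\ref{prop:doubletriangle}. Your normalization of $y$ by $g^{-j_0}$ and the explicit "approximate nearest point" bookkeeping are just a tidier packaging of the same estimates.
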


\begin{proof}
	Fix $s\in Q$. By definition of loxodromic element, the set $\{g^ns \mid n\in \Z\}$ is an $(L,C)$-quasigeodesic for some $L,C$. Given $x\in Q$, consider $k\in \Z$ such that one of the closest point projections of $x$ to $\{g^ns \mid n\in \Z\}$ is $g^k s$. 
	By definition of nearest point projection we have that $\rho(x,g^ks)\le \rho(x,g^n s)$ for all $n\in \Z$. The group $G$ acts by isometries and the set $\{g^ns\}$ is $g$-invariant, so for any $m\in\Z$ the nearest point projection of $g^mx$ can be chosen to be $g^{m+k}s$. Consider now the $r$-geodesic $\gamma$ between $g^{k}s$ and $g^{m+k}s$, and the projections $p_x$ and $p_{g^mx}$ of $x$ and $g^mx$ to $\gamma$. By Morse Lemma, there is a constant $D$ such that the geodesic $\gamma$ is at $D$ distance from the points $\{g^ns,k\le n \le k+m\}$, so by the triangle inequality,
	\begin{align*}
	\rho(x,p_x)&\ge \rho(x,\{g^ns\mid k\le n \le m+k\})-\rho(\{g^ns\mid k\le n \le m+k\},p_x)-r\\
	&\ge\rho(x,g^{k}s)-r-D.
	\end{align*}
	Adding equation \eqref{eq:invtriangle}, we obtain
	$$\rho(p_x,g^{k}s)=\rho(x,g^{k}s)-\rho(x,p_x)+C(r)\le D+r+C(r).$$
	The same result can be obtained in the same way for the distance between $g^{m+k}s$ and $p_{g^mx}$. Hence, we get
	$$\rho(p_x,p_{g^mx})\ge \rho(g^{k}s,g^{m+k}s)-2K-2r+C(r)\ge \frac{k}{L}-C-2D-2r+C(r).$$
	Since $L$, $C$ and $K$ depend only on $s$, we can take $m$ such that $\frac{m}{L}-C-2D-2r+C(r)$ is big enough so that we can apply Proposition \ref{prop:doubletriangle}. Hence we get, for any $x\in Q$,
	$$\d(x,g^mx)=\d(x,p_x)+\d(p_x,p_{g^mx})+\d(p_{g^mx},g^mx)+C(r)\ge \frac{m}{L}-C-2D-2r+C(r),$$
	so the proposition is satisfied for $0<M<\frac{1}{L}$ and $N>L(M+C+2D+2r)$ (and big enough such that Proposition \ref{prop:doubletriangle} applies).
\end{proof}

We say that an element $h\in G$ is
\emph{weakly properly discontinuous} (WPD) if for every $s\in S$ and $\epsilon >0$ there exists $K\in \N$ such that 
$$|\{f\in G\mid \d(s,fs)<\epsilon \text{ and } \d(h^{K}s,fh^{K}s)<\epsilon\}|<\infty.$$

Finally, we recall that a group is called \emph{hyperbolic} if it is finitely generated, and any Cayley graph obtained from a finite set of generators it hyperbolic in the word metric. Since two Cayley graphs generated by different finite sets of generators are quasi-isometric, and the hyperbolicity property and Gromov boundary are invariant by quasi-isometries, the notion of hyperbolic group is well defined and one can talk about the Gromov boundary of the group. By the Švarc–Milnor lemma, any group acting by isometries, properly discontinuously and cocompactly on a proper hyperbolic space is hyperbolic.

\subsection{Random Walks}\label{sect:randomwalks}
Let $G$ be a discrete group and $\mu$ a probability measure on $G$. The \emph{step space} $\Omega:=G^\N$ is the space of infinite sequences of group elements, which we consider as a probability space with the product measure $\P:=\mu^\N$. We will denote \emph{random walk} on $G$ starting at $g_0$ the stochastic process (indexed by $\N\cup 0$) obtained by associating to each $n$, the $G$-valued random variable $w_n:\Omega\to G$ defined by
$$(g_1,g_2,\ldots)\mapsto w_n:=g_0g_1\ldots g_n.$$
In other words, a random walk on $G$ is a time homogeneous Markov chain with transition probabilities given by $p(g,h)=\mu(g^{-1}h)$. Our random walks will always start at the neutral element, that is, $g_0=e$.

In this paper, the group $G$ will act by isometries on some metric space $(X,\d)$, and we will be interested in the process we get by applying the random walk to some starting point $x\in X$, i.e., in the process $(w_n x)_{n\in \N}$. 
We will refer to this new process as random walk on $X$ (generated by $(G,\mu)$). This can also be seen as the projection of the random walk on $G$ to $X$.

We will be interested in the asymptotic behavior of the random walks, 
in particular, whether they converge to some boundary at infinity, and in which way they converge. 
Assume $G$ can be embedded into a $G$-space of the form $G\cup B$ (that is, a topological space upon which $G$ acts by homeomorphisms), and that for almost every $\omega \in \Omega$, the sample path $(w_n(\omega))$ converges to some point $w_\infty(\omega)\in B$. 
We say that a measure $\nu$ on $B$ is $\mu$-stationary if for any measurable $A\subset B$ we have $\nu(A)=\sum_{g\in G}\mu(g)g\nu(A)$, where $g\nu(A):=\nu(g^{-1}A)$.
Furstenberg shows in \cite{Furstenberg} that the resulting hitting measure $\nu$ in $B$ is $\mu$-stationary and the measure $w_n\nu$ converges in the weak-* topology to a point measure. 
With this in mind, Furstenberg defines the following.
\begin{definition}
	Let $G$ be a group acting on a measurable $G$-space $(B,\nu)$ and $\mu$ a measure on $G$. Then $(B,\nu)$ is a \emph{$\mu$-boundary} (or \emph{Furstenberg boundary}) of $(G,\mu)$ if
	\begin{enumerate}
		\item $\nu$ is a $\mu$ stationary probability measure;
		\item for almost every sample path $(w_n)$, the sequence of measures $(w_n \nu)$ converges weakly to a $\delta$-measure. 
	\end{enumerate}
\end{definition}
Furstenberg also shows that whenever $(B,\nu)$ is a $\mu$-boundary we can endow $G\cup B$ with a topology such that the sample paths of the random walks converge almost surely to points in the boundary.
However, the inclusion $G\hookrightarrow G\cup B$ might not be an embedding.

If we consider the $\mu$ boundaries of a group as measure spaces, we can establish a partial order between them by considering $(B_1,\mu_1)\ge (B_2,\mu_2)$ if there exists a $G$-equivariant map $f:B_1\to B_2$ such that $(B_2,\mu_2)=(f(B_1),f_*\mu_1)$. 
Furstenberg also shows that, up to the equivalence given by the order relation, there exists a unique maximal $\mu$-boundary, called the \emph{Poisson boundary}.

Using the strip criterion developed by Kaimonovich in \cite[Theorem 6.4]{Kaimonovich}, and assuming that the measure $\mu$ has finite logarithmic moment (that is, $\sum_{g\in G}\mu(g)|\log(\d(x,gx))|<\infty$), Maher and Tiozzo prove in \cite{Tiozzo3} the following theorem, where they determine the Poisson boundary for wide a variety of groups.

\begin{theorem}[Maher and Tiozzo]\label{theo:poissonacylindrical}
	Let $G$ be a countable group which acts by isometries on a hyperbolic metric space $(X,d)$, and let $\mu$ be a non-elementary probability measure on $G$ with finite logarithmic moment and finite entropy.
	Suppose that there exists at least one WPD element $h$ in the semigroup
	generated by the support of $\mu$. Then the Gromov boundary of $X$ with the hitting
	measure is a model for the Poisson boundary of the random walk $(G,\mu)$.
\end{theorem}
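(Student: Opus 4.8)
The plan is to apply Kaimanovich's strip criterion \cite[Theorem 6.4]{Kaimonovich}. Recall its shape: if $\mu$ has finite entropy, $(\partial_+,\nu_+)$ is a $\mu$-boundary, $(\partial_-,\nu_-)$ is a $\check\mu$-boundary (where $\check\mu(g)=\mu(g^{-1})$), and there is a measurable, $G$-equivariant assignment of ``strips'' $(\xi_-,\xi_+)\mapsto S(\xi_-,\xi_+)\subseteq G$, defined for $\nu_-\otimes\nu_+$-almost every pair, such that for some basepoint $x_0\in X$,
\[
\liminf_{n\to\infty}\frac1n\log\#\bigl\{g\in S(\xi_-,\xi_+):d(x_0,gx_0)\le\lambda n\bigr\}=0
\]
for $\nu_-\otimes\nu_+$-a.e.\ $(\xi_-,\xi_+)$ and a suitable $\lambda>0$, then $(\partial_+,\nu_+)$ is a model for the Poisson boundary of $(G,\mu)$. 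So there are two tasks: produce the boundaries, and produce sufficiently thin strips.

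First I would check that $(\partial X,\nu)$ is a $\mu$-boundary. Since the semigroup generated by $\operatorname{supp}\mu$ contains the loxodromic (WPD) element $h$ and $\mu$ is non-elementary, the basic convergence theory for random walks on (not necessarily proper) hyperbolic spaces applies: for $\P$-almost every sample path the sequence $(w_nx_0)$ converges to a point $w_\infty\in\partial X$, the hitting measure $\nu=\nu_+$ is $\mu$-stationary and non-atomic, and $(\partial X,\nu)$ satisfies the two conditions defining a $\mu$-boundary; the same applied to the reflected measure $\check\mu$ yields a non-atomic $\check\mu$-boundary $(\partial X,\nu_-)$. I would also record the sublinear tracking statement — $(w_nx_0)$ stays within distance $o(n)$ of a geodesic ray towards $w_\infty$, and likewise for the reflected walk — since this is what lets the strip criterion be applied to the strips chosen below.

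For the strips, given distinct $\xi_-,\xi_+\in\partial X$ fix a (quasi-)geodesic $\gamma=\gamma(\xi_-,\xi_+)$ joining them, chosen $G$-equivariantly away from a $\nu_-\otimes\nu_+$-null set, and put
\[
S(\xi_-,\xi_+)=\{g\in G:d(gx_0,\gamma)\le L\},
\]
where $L$ is a constant fixed in the course of the estimate. This family is measurable and equivariant, and $\nu_-\otimes\nu_+$-a.e.\ pair consists of distinct points by non-atomicity of $\nu_\pm$, so it remains to verify the growth bound. Here the WPD hypothesis is exactly what separates this theorem from the mere existence of loxodromic elements: after replacing $h$ by a suitable power, the WPD condition says that the coarse stabilizer of a long enough subsegment of the quasi-axis $\alpha=\{h^nx_0:n\in\Z\}$ is finite. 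Using non-elementarity together with the recurrence of the random walk, one shows that for $\nu_-\otimes\nu_+$-almost every pair the part of $\gamma$ lying in a ball of radius $\lambda n$ about $x_0$ can be covered by $O(n)$ subsegments, each contained in a bounded neighbourhood of some translate of $\alpha$; on each such subsegment the WPD bound permits only a bounded number of $g$ with $d(gx_0,\gamma)\le L$, whence $\#\{g\in S(\xi_-,\xi_+):d(x_0,gx_0)\le\lambda n\}=O(n)$, which is certainly subexponential. The strip criterion then gives that $(\partial X,\nu)$ is a model for the Poisson boundary of $(G,\mu)$.

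I expect the growth estimate for the strips to be the main obstacle, and within it the step that passes from ``WPD makes the coarse stabilizers of $\alpha$ finite'' to ``the strip along a generic geodesic $\gamma$ has at most linear growth''. Since the action of $G$ on $X$ need not be proper, a single geodesic segment can a priori carry infinitely many orbit points, so the local finiteness supplied by WPD near $\alpha$ has to be propagated along all of $\gamma$; making this precise — controlling how a geodesic between $\nu$-typical boundary points, together with the markers provided by the bilateral random walk, meets translates of the cyclic orbit $\langle h\rangle x_0$ — is the technical core of the proof.
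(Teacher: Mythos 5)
First, a point of orientation: the paper does not prove this statement at all --- it is quoted verbatim from Maher and Tiozzo \cite{Tiozzo3}, and the only indication of method given in the surrounding text is that the proof runs through Kaimanovich's strip criterion \cite[Theorem 6.4]{Kaimonovich}. Your outline therefore matches the strategy of the actual source: finite entropy, a $\mu$-boundary and a $\check\mu$-boundary given by $(\partial X,\nu_\pm)$, and measurable equivariant strips of subexponential growth. The first step of your reduction (convergence to $\partial X$, stationarity and non-atomicity of the hitting measures for both $\mu$ and $\check\mu$) is indeed available in the non-proper WPD setting and is fine to take as input.

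The gap is in the strips. As literally defined, $S(\xi_-,\xi_+)=\{g\in G: d(gx_0,\gamma)\le L\}$ need not satisfy the counting bound, and you have in fact put your finger on exactly why: since the action is not assumed proper, a single bounded segment of $\gamma$ can carry infinitely many orbit points, so $\#\{g\in S(\xi_-,\xi_+): d(x_0,gx_0)\le \lambda n\}$ can be infinite for every $n$ and the criterion cannot be applied. The assertion that ``one shows'' the relevant portion of $\gamma$ is covered by $O(n)$ translates of the axis $\alpha$, each contributing boundedly many $g$, is precisely the statement that requires proof, and for the naive strip it is not true: WPD controls the coarse stabilizer of a long subsegment of $\alpha$, not the set of $g$ with $gx_0$ near an arbitrary geodesic. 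What Maher and Tiozzo actually do is build the strip out of a \emph{matching} condition: $g$ is admitted only when the translate $g\alpha$ of the quasi-axis fellow-travels $\gamma$ along a sufficiently long subsegment, with the bilateral random walk supplying, almost surely, a positive density of such matched translates along $\gamma$; for each fixed matched subsegment, WPD bounds the number of admissible $g$, and linear growth of the strip follows. Your proposal identifies the right criterion and the right obstruction, but the construction that overcomes the obstruction --- the modified, matching-based strip --- is missing, and without it the central estimate does not go through.
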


Just as it happens with hyperbolic groups, the Gromov boundary, together with a stationary measure, is a model for the Poisson boundary. 
This result is an improvement of their previous result, proven in \cite{Tiozzo2}, where they required the action to be acylindrical.

\newcommand{\something}{*}
\newcommand{\outsideset}{O}
\newcommand{\insideset}{I}

\subsection{Convergence Groups}\label{sect:convergencegroups}

The notion of convergence group was originally introduced by Gehring and Martin in \cite{Kleinian}, where they axiomatize the dynamical properties of Kleinian groups acting on the Gromov boundary of $\mathbb{H}^n$. In particular, they give the following definition.
\begin{definition}
	Let $G$ be a discrete countable group acting on a compact metrizable space $M$. $G$ is called a \emph{convergence} group if for every infinite sequence $(g_n)\subset G$ of distinct elements, there exists a subsequence $(g_{n_k})$ and points $a,b\in M$ such that $g_{n_k}|_{M\setminus a}$ converges to $b$ locally uniformly, that is, for every compact set $K\subset M\setminus a$, and every neighbourhood $U$ of $b$, there is $N$ such that $g_{n_k}(K)\subset U$ whenever $n_k>N$.
\end{definition}

The points $a$ and $b$ are respectively called the \emph{repelling} and \emph{attracting} points of the subsequence $(g_{n_k})$. 
We say that $G$ is a convergence group on $M$ if it acts on $M$ as a convergence group.

Convergence groups appear naturally when dealing with groups acting on hyperbolic spaces. 
Indeed, Bowditch proves the following result in \cite{Bowditch}.
\begin{proposition}[Bowditch]\label{hypgroupsareconve}
	Let $G$ be a group acting by isometries and properly discontinuously on a proper hyperbolic space $X$. Then $G$ is a convergence group on the Gromov boundary $\partial X$. In particular, all hyperbolic groups are convergence groups.
\end{proposition}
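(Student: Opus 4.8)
The statement is Bowditch's, so the paper merely cites it; here is the argument I would reconstruct. The plan is to fix a basepoint $o\in X$ and work in the compactification $\bar X:=X\cup\partial X$, which is compact and metrizable since $X$ is proper, and on which $G$ acts by homeomorphisms. Because the action is properly discontinuous, for each $R>0$ the set $\{g\in G\mid \d(o,go)\le R\}$ is finite; since the $g_n$ are pairwise distinct this forces $\d(o,g_no)\to\infty$. By compactness of $\bar X$ I would pass to a subsequence, still written $(g_n)$, along which $g_no\to b$ and $g_n^{-1}o\to a$ for some $a,b\in\partial X$, and claim that $a$ and $b$ are the repelling and attracting points.

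It then remains to show that $g_n\to b$ locally uniformly on $\partial X\setminus\{a\}$. Fix a compact $K\subset\partial X\setminus\{a\}$. By definition of the topology on $\partial X$, a sequence converges to $a$ exactly when its Gromov product with $a$ at $o$ tends to infinity; as $K$ is closed and omits $a$, there is $R>0$ with $(\xi\cdot a)_o\le R$ for all $\xi\in K$ (otherwise some sequence in $K$ would converge to $a\notin K$). For $\xi\in K$ I would then use the chain of estimates
\[
(g_n\xi\cdot g_no)_o=\d\bigl(g_n^{-1}o,[o,\xi)\bigr)+C(\delta)=\d(o,g_no)-(g_n^{-1}o\cdot\xi)_o+C(\delta),
\]
where the first equality uses that the isometry $g_n$ carries a geodesic ray $[o,\xi)$ to a ray from $g_no$ to $g_n\xi$ and the comparison of a Gromov product with distance to a geodesic, the second is the standard thin-triangle estimate for the points $o$, $g_n^{-1}o$, $\xi$, and $C(\delta)$ denotes a quantity bounded in terms of $\delta$ only. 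The reverse triangle inequality gives $\min\{(g_n^{-1}o\cdot a)_o,(g_n^{-1}o\cdot\xi)_o\}\le(a\cdot\xi)_o+C(\delta)\le R+C(\delta)$; since $g_n^{-1}o\to a$ makes $(g_n^{-1}o\cdot a)_o\to\infty$ (uniformly in $\xi$, as it does not involve $\xi$), for all large $n$ the minimum is realised by the second term, so $(g_n^{-1}o\cdot\xi)_o\le R+C(\delta)$ for every $\xi\in K$. Together with $\d(o,g_no)\to\infty$ this yields $(g_n\xi\cdot g_no)_o\to\infty$ uniformly over $\xi\in K$.

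Applying the reverse triangle inequality once more, $(g_n\xi\cdot b)_o\ge\min\{(g_n\xi\cdot g_no)_o,(g_no\cdot b)_o\}+C(\delta)$, and $(g_no\cdot b)_o\to\infty$ because $g_no\to b$; hence $(g_n\xi\cdot b)_o\to\infty$ uniformly over $\xi\in K$, i.e. $g_n\xi\to b$ uniformly on $K$. Since $\partial X$ is compact metrizable, uniform convergence in the Gromov-product topology means precisely that for every open $U\ni b$ there is $N$ with $g_n(K)\subset U$ whenever $n>N$, which is the convergence property. The last assertion follows by taking for $X$ a locally finite Cayley graph of the hyperbolic group $G$: it is proper, geodesic and hyperbolic, $G$ acts on it by isometries and properly discontinuously, and $\partial X=\partial G$.

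I expect the main technical obstacle to be the uniform control of the additive $\delta$-errors in the Gromov-product manipulations — in particular making each inequality uniform over $\xi\in K$, and checking that the identity $(g_n\xi\cdot g_no)_o=\d(o,g_no)-(g_n^{-1}o\cdot\xi)_o+C(\delta)$ persists when one argument lies on $\partial X$, which is where properness of $X$ (guaranteeing existence of geodesic rays and compactness of $\bar X$) is genuinely used. None of this is hard, but it is the place where care is required.
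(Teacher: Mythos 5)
The paper does not prove this proposition; it is quoted from Bowditch's \emph{A topological characterisation of hyperbolic groups} and used as a black box, so there is no in-paper argument to compare against. Your reconstruction is correct and is essentially the standard (and essentially Bowditch's) proof: proper discontinuity forces $\d(o,g_no)\to\infty$, compactness of $X\cup\partial X$ gives the attracting and repelling points $b=\lim g_no$ and $a=\lim g_n^{-1}o$, and the Gromov-product identity $(g_n\xi\cdot g_no)_o=\d(o,g_no)-(g_n^{-1}o\cdot\xi)_o+C(\delta)$ together with the uniform bound $(g_n^{-1}o\cdot\xi)_o\le R+C(\delta)$ on the compact set $K$ yields locally uniform convergence to $b$. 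The only points deserving a remark are the ones you already flag: all additive errors are uniform in $\xi$ because they depend only on $\delta$, and properness is what guarantees that $\overline{B}(o,R)$ is compact (so the displacement bound follows from proper discontinuity) and that $X\cup\partial X$ is a compact metrizable space on which the basic neighbourhoods $\{\xi:(\xi\cdot b)_o>R\}$ form a neighbourhood basis of $b$, which is what converts the uniform divergence of $(g_n\xi\cdot b)_o$ into the containment $g_n(K)\subset U$.
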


Adapting the definition for hyperbolic spaces, we say $G$ is \emph{non-elementary} if there is no invariant subset of $M$ consisting of at most 2 points.
We say that the action is \emph{minimal} if $M$ has no proper closed invariant set.
We will always assume that the action of $G$ is non-elementary and minimal.
Note that while requiring the action to be non-elementary is a restriction on $G$, the minimality is not, as we can always take a subset of $M$ such that the restricted action of $G$ is minimal.

\subsubsection{Kleinian groups and the space of distinct triples}\label{seckleinian}

A \emph{Kleinian group} is a discrete group of Möbius transformations of the $n$-sphere $\S^n$.
The action can be extended to act on the $(n+1)$-ball $B^{n+1}$, and the ball can be equipped with a hyperbolic metric $d_H$ such that the extension of the Möbius transformations act by isometries. 
Hence, Kleinian groups are discrete groups acting by isometries on the hyperbolic space $(B^{n+1},d_{H})$. 
Since the extension is always properly discontinuous, by Proposition \ref{hypgroupsareconve} Kleinian groups act as convergence groups on $\partial B^{n+1}=\S^n$.

In this case where $M=\S^n$ is the Gromov boundary of some hyperbolic space, we can define a map from the space of distinct triples
$$ T:=\{(a,b,c)\in M^3\mid a\neq b\neq c\neq a\}$$
to $M$ by $p(a,b,c):=z$, where $z$ is the projection of the boundary point $c$ on the unique geodesic between $a$ and $b$. Endowing $T$ with the induced topology, we have that the diagonal action by $G$ defined by $g(a,b,c)=(ga,gb,gc)$ is continuous.
It is easy to see that $p$ commutes with $G$, that the preimage of a point under $p$ is compact, and that given two points in $B^{n+1}$, their preimages by $p$ are homeomorphic.
Therefore, in the case of Kleinian groups, $T$ can be seen as a bigger version of $B^{n+1}$. 
As Tukia points out in \cite{Tukia}, $T$ works as a rough equivalent to the hyperbolic space for convergence groups. 
For example, Bowditch shows in \cite[Lemma 1.1]{Bowditch} that the action of $G$ on $M$ is a convergence action if and only if the induced action on $T$ is properly discontinuous, bearing some similarity to Proposition \ref{hypgroupsareconve}. 
Tukkia also pastes $M$ to $T$ in an analogous way to that of the Gromov boundary. 
Before explaining how the pasting goes, it is convenient to see the following lemma.

\begin{lemma}\label{lemma:kleinanconvtobound}
	Let $(x_n)\subset B^{d+1}$ be a sequence such that $x_n\to \lambda\in \partial B^{d+1}=\S^d$. Then, given a neighborhood $U$ of $\lambda$ in $\S^n$, there exists $n_0$ such that for all $n\ge n_0$, every member of $p^{-1}(x_n)$ has at least two components inside $U$.
\end{lemma}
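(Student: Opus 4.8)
The plan is to reduce everything to a concrete picture in the upper half-space model of $B^{d+1}$. Since the statement is invariant under isometries of $B^{d+1}$, we may assume that, writing $\mathbb{H}^{d+1}=\R^d\times(0,\infty)$, the boundary point $\lambda\in\S^d$ is the point $\infty$. There are two geometric inputs. The first is a standard fact about $p$: for $(a,b,c)\in T$ the point $p(a,b,c)$ — the nearest-point projection of the boundary point $c$ onto the geodesic $[a,b]$ — lies within a universal distance $D_1$ of each of the three sides $[a,b]$, $[b,c]$, $[a,c]$ of the ideal triangle with vertices $a,b,c$. Indeed $p(a,b,c)$ lies in the totally geodesic plane spanned by $\{a,b,c\}$, and in that copy of $\mathbb{H}^2$ it coincides with the point where the inscribed circle of the ideal triangle meets $[a,b]$ (both being the symmetric point of that side), all ideal triangles being isometric; so one may take $D_1$ to be twice the inradius of an ideal triangle.

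The second input is the following claim: if $q_n\to\lambda$ in $\overline{B^{d+1}}$ and $\ell_n$ is a bi-infinite geodesic with $q_n$ at distance at most $D_1$ from $\ell_n$, then for every $\epsilon>0$ there is $n_0$ such that for all $n\ge n_0$ at least one of the two endpoints of $\ell_n$ lies within $\epsilon$ of $\lambda$ in the spherical metric. To see this, choose $z_n\in\ell_n$ with $d_H(q_n,z_n)\le D_1$; fixing a basepoint $o$, the Gromov product $(q_n\cdot z_n)_o$ tends to infinity (since $d_H(o,q_n)\to\infty$ and $d_H(q_n,z_n)$ is bounded), so $z_n\to\lambda$ as well. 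Suppose the claim fails: then for some $\epsilon>0$ and some subsequence both endpoints of $\ell_n$ lie in the compact set $\S^d\setminus B(\lambda,\epsilon)$. In our chart these endpoints have Euclidean norm at most some $R=R(\epsilon)$, so each $\ell_n$ is a Euclidean semicircle of radius at most $R$ centred at a point of norm at most $R$; hence $\ell_n$, together with its endpoints, lies in the compact subset $K=\{(y,t):|y|\le 2R,\ 0\le t\le R\}$ of the closed upper half space, a set not containing $\lambda=\infty$. This contradicts $z_n\in\ell_n$ and $z_n\to\lambda$.

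Given these two inputs the lemma follows by pigeonhole. Let $U$ be a neighbourhood of $\lambda$ and pick $\epsilon>0$ with $B(\lambda,\epsilon)\subseteq U$. For an arbitrary triple $(a,b,c)\in p^{-1}(x_n)$, all three sides $[a,b]$, $[b,c]$, $[a,c]$ are within $D_1$ of $x_n$ by the first input, so applying the claim with $q_n=x_n$ to each of the three sides yields a single threshold $n_0$, independent of the triple, such that for $n\ge n_0$ each of the three sides has an endpoint in $B(\lambda,\epsilon)\subseteq U$. If at most one of $a,b,c$ belonged to $U$, then two of them would lie outside $U$ and the side joining those two would have neither endpoint in $U$, a contradiction; hence at least two of $a,b,c$ lie in $U$. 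Since $n_0$ depends only on the sequence $(x_n)$ and on $U$, this proves the lemma.

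The step I expect to require the most care is the first input: checking cleanly that $p(a,b,c)$ is uniformly close to all three sides of the associated ideal triangle, and that this reduction genuinely takes place inside the totally geodesic plane through $a,b,c$. The compactness argument for the claim and the concluding pigeonhole are then routine.
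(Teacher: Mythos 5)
Your proof is correct, but it takes a genuinely different route from the paper's. The paper works coarsely with Gromov products from a fixed interior basepoint $x$: it describes the neighbourhood $U$ via a sublevel set $\{(\lambda\cdot y)_x>R\}$, assumes one of the three products $(\lambda\cdot a)_x,(\lambda\cdot b)_x,(\lambda\cdot c)_x$ (say the one for $c$) is small, deduces that the geodesic $[c,x_n]$ passes near $x$, and then uses the orthogonality of $[a,b]$ and $[c,x_n]$ at $x_n$ together with the nearest-point-projection estimate of Proposition \ref{prop:doubletriangle} and the reverse triangle inequality to force $(\lambda\cdot a)_x$ and $(\lambda\cdot b)_x$ to be large. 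You instead exploit the exact geometry of real hyperbolic space: identifying $p(a,b,c)$ with the incircle tangency point of the ideal triangle gives the uniform constant $D_1$ bounding its distance to all three sides, and the rest is a compactness argument in the half-space model plus a clean pigeonhole over the three sides (every side must have an endpoint near $\lambda$, so at most one vertex can be far from $\lambda$). Your version is more concrete and, to my eye, easier to verify than the paper's case analysis; the paper's version is coarser and would transfer more directly to a general $\delta$-hyperbolic setting, which matters given that the surrounding discussion uses this lemma as motivation for Tukia's topology on $T\cup M$. Two small points of care in your write-up: your ``second input'' should be stated uniformly, i.e.\ for every $\epsilon>0$ there is $n_0$ such that \emph{every} geodesic within $D_1$ of $x_n$, $n\ge n_0$, has an endpoint in $B(\lambda,\epsilon)$ --- your subsequence-and-contradiction proof does deliver exactly this, since the negation of the uniform statement hands you a sequence of bad geodesics, but the statement as phrased quantifies over a single sequence $(\ell_n)$; and you are right that the identification of $p(a,b,c)$ with the tangency point (via the reflection swapping $a$ and $b$, which fixes both points) is the step deserving an explicit sentence.
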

\begin{proof}
	Fix $x\in B^{n+1}$ and consider $R>0$ such that the neighborhood $V(\lambda,r):=\{y\in \overline{B}^{n+1}\mid(\lambda,y)_x>R\}$ of $\lambda$ in $B^{d+1}$ satisfies $V(\lambda,R)\cap \S^n\subset U$. Since $x_n\to \lambda$, given $C>0$, there exists $n_0$ such that $x_n\subset U(\lambda,R+C)$ for all $n\ge n_0$. Fix then $n\ge n_0$ and $(a,b,c)\in p^{-1}(x_n)$. Consider the Gromov products $(\lambda\cdot a)_x$, $(\lambda\cdot b)_x$ and $(\lambda\cdot c)_x$. If none of them is smaller than $R$, we are done. Assume $(\lambda\cdot c)_x$ to be the smallest, and that it is smaller than $R$. By the reverse triangle inequality, we have $(\lambda\cdot c)_x\ge \min((c\cdot x_n)_x,(x_n\cdot \lambda)_x)$, so 
	$$\d(x,[c,x_n])=(c\cdot x_n)_x+C(\delta)\le (\lambda\cdot c)_x+C(\delta)\le R+C(\delta),$$
	where $\delta$ is the hyperbolicity constant associated to the hyperbolic space.
	The geodesics $[a,b]$ and $[c,x_n]$ meet orthogonaly at $x_n$, so given any $y\in [a,b]$ the closest point projection of $y$ to the geodesic $[c,x_n]$ is $x_n$. Recall that $(x_n\cdot \lambda)_x\ge R+C$ and $\d(x,x_n)\ge R+C$, so if $q$ is the projection of $x$ to $[c,x_n]$, then $\d(q,x_n)\ge \d(x,x_n)-\d(x,q)\ge C$. Taking $C$ big enough so Proposition \ref{prop:doubletriangle} applies, we get $\d(x,y)\ge R+C+O(\delta)$. Hence, $(a\cdot x_n)_x,(x_n\cdot b)_x\ge R+C+O(\delta)$. Finally, $(a\cdot \lambda)_x\ge\min((a\cdot x_n)_x,(x_n\cdot \lambda)_x)\ge R+C+O(\delta)$, so $a\in U$, and similarly $b\in U$. If $(c\cdot \lambda)_x$ is not the smallest, we can use that $p(b,c,a)$ and $p(c,a,b)$ are at a bounded distance from $p(a,b,c)$, and take $C$ a little bigger.
\end{proof}

This lemma shows that the notion of convergence to the boundary on $\overline{B}^{n+1}$ can be translated to $T\cup M$ via the following neighbourhoods.
Given $U\subset M$ an open set, we define the associated set on $T\cup M$ by
$$\widetilde{U}=\{x\in T\mid x \text{ has at least two components in } U\}\cup U.$$
Adding to these sets the open sets of $T$ we get a basis for a unique topology on $T\cup M$.
From Lemma \ref{lemma:kleinanconvtobound}, if $x_n\to \lambda\in \S^d$ in $\overline{B}^{d+1}$ then any sequence of preimages $\tilde{x}_n\in p^{-1}(x_n)$ will also converge to the same $\lambda\in M=\S^d$ in the above topology on $T\cup M$. 
Conversely, if $(\tilde{y}_n)$ converges to $\lambda$ in $T\cup \S^n$ then $(p(\tilde{y}_n))$ will be $C(\delta)$-close to a geodesic with two endpoints that are close to $\lambda$ so it also converges to $\lambda$ in $\overline{B}^{n+1}$. 
Therefore, just as $T$ can be regarded as a rough equivalent of the hyperbolic space, $M$ can be seen as a rough equivalent of its Gromov boundary, and this way of pasting them together works as an equivalent of Gromov's topology.

\subsubsection{The metric of the space of triples}\label{sect:metricspacetriples}

Sun shows in \cite{binsun} that the analogy from the last section can be taken a step further by actually endowing $T$ with a hyperbolic path quasimetric $\rho$ in such a way that $G$ acts by isometries on $(T,\rho)$. 
The quasimetric is based on a construction done by Bowditch in \cite{Bowditch2}. To define the quasimetric we first have to introduce some concepts.

\begin{definition}
	An \emph{annulus} $A$ is an ordered pair $(A^-,A^+)$ of disjoint closed subsets of $M$ such that $M\setminus (A^-\cup A^+)\neq \emptyset$. 
	A set of annuli $\mathcal{A}$ is an \emph{annulus system}. It is \emph{symmetric} if $A\in\mathcal{A}$ implies $-A:=(A^+,A^-)\in \mathcal{A}$.
\end{definition}

For a $g\in G$, we denote $gA$ the annulus $(gA^-,gA^+)$.

For any subset $K\subset M$ we define the relations $K<A$ if $K\subset \inte A^-$ and $A<K$ if $K\subset \inte A^+$. 
If $B$ is another annulus we write $A<B$ if $\inte A^+\cup \inte B^-=M$. 
Since $B^+\subset (B^-)^c$ this implies $A^+\supset B^+$ and $A^-\subset B^-$. 

For an annulus system $\mathcal{A}$ on $M$ and $K,L\subset M$ we define $(K|L)=n\in\{0,1,\ldots,\infty\}$ where $n$ is the maximal number of annuli $A_i$ in $\mathcal{A}$ such that we can build the chain 
\[
K<A_1<A_2<\ldots <A_n<L.
\]
This gives us two sequences of inclusions, $K\subset A_1^-\subset A_2^-\ldots \subset A_n^-\subset L^c$ and $K^c\supset A_1^+\supset\ldots \supset A_n^+\supset L$. For finite sets, we drop the braces and write $(a,b|c,d)$ to mean $(\{a,b\}|\{c,d\}).$ The function $(\cdot,\cdot|\cdot,\cdot)$ from $M^4$ to $\N \cup \infty$ we just defined is called the \emph{crossratio}. With all of this we can define the function which will give us the quasimetric.

\begin{definition}
	Given an annulus system $\mathcal{A}$ on $M$, define the function $\rho:T^2\to [0,\infty]$ by
	$$\rho((x^1,x^2,x^3),(y^1,y^2,y^3)):=\max\{(x^i,x^j|y^k,y^l) : i\neq j,k\neq l\}.$$
\end{definition}

In \cite{Bowditch2} it is shown that if the annulus system is $G$-invariant, symmetric and such that $\mathcal{A}/G$ is finite then the previous function takes values in $[0,\infty)$ and is a $G$-invariant hyperbolic path quasimetric.
The geometric realization of the graph obtained by considering the points of $T$ as vertices and joining them by edges whenever their $\rho$ distances are smaller than some number $s$, defined in Sun's paper \cite{binsun}, is a hyperbolic metric space $(S,\rho')$.
The action induced by $G$ on this space is isometric and the inclusion $T\hookrightarrow  S$ is a $G$-equivariant quasi-isometry.

The remaining step is to choose a convenient annulus system. The following result, found in \cite{Tukia2}, will play an important role in that.
\begin{theorem}[Tukia]\label{theo:preloxodromic}
	If $G$ is a non-elementary convergence group on $M$, then there is an element $g\in G$ such that $g$ fixes two distinct points $a,b$ and such that $g^n|_{M\setminus a}$ converges to $b$ locally uniformly as $n\to \infty$.
\end{theorem}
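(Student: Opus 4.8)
The plan is to extract the required element from the convergence dynamics of a suitably chosen sequence in $G$, building an element with ``north--south'' dynamics. Two preliminary remarks are needed. First, a non-elementary convergence group is infinite and $M$ is a perfect space: if either $G$ or $M$ were finite, the convergence property quickly forces $|M|\le 2$, contradicting non-elementarity; so there are infinite sequences of distinct elements to feed into the convergence axiom, and every nonempty open subset of $M$ is infinite. Second, I would prove the following ``inverse'' lemma: if $(f_n)\subset G$ is a sequence of distinct elements with $f_n|_{M\setminus\alpha}\to\beta$ locally uniformly, then along a subsequence $f_n^{-1}|_{M\setminus\beta}\to\alpha$ locally uniformly. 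This is the standard argument: apply the convergence axiom to $(f_n^{-1})$ to get a repelling point $\alpha'$ and attracting point $\beta'$, and then evaluate $f_n\circ f_n^{-1}=\mathrm{id}$ to see, using that $M$ has at least three points, that $\beta'=\alpha$ and $\alpha'=\beta$.

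Next I would arrange a sequence whose repelling and attracting points are distinct. Take any infinite sequence of distinct elements and pass to a convergence subsequence $(g_n)$ with repelling point $a$ and attracting point $b$. If $a=b=p$, choose $h\in G$ with $h(p)\ne p$, which exists because $\{p\}$ is not $G$-invariant, and replace $(g_n)$ by $(h g_n)$. Then $h g_n|_{M\setminus p}\to h(p)$ locally uniformly, and the preliminary lemma (applied to $(g_n)$, whose inverse converges to $p$ off $p$) gives $g_n^{-1}h^{-1}|_{M\setminus h(p)}\to p$ locally uniformly. So after this modification we may assume $g_n|_{M\setminus a}\to b$ and $g_n^{-1}|_{M\setminus b}\to a$ locally uniformly, with $a\ne b$.

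Now I would fix disjoint closed neighbourhoods $A\ni a$ and $B\ni b$, together with a closed neighbourhood $B'\ni b$ with $B'\subseteq \inte B$ and $B'\ne B$ (possible since $M$ is perfect). For all large $n$ we have $g_n(M\setminus\inte A)\subseteq\inte B'$ and $g_n^{-1}(M\setminus\inte B)\subseteq\inte A$; fix one such $n$ and set $g:=g_n$, so that $g(B)\subseteq B'\subsetneq B$ and $g^{-1}(A)\subseteq\inte A$. A short argument using that $g$ is a bijection shows $g$ has infinite order: if $g^m=\mathrm{id}$ then the chain $B'\supseteq g(B')\supseteq\cdots\supseteq g^m(B')=B'$ forces $g(B')=B'$, and then injectivity gives $g(B\setminus B')=\emptyset$, contradicting $B'\subsetneq B$. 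Applying the convergence axiom to the distinct powers $(g^k)_{k\ge 1}$, together with the preliminary lemma, yields a subsequence with $g^{k_j}|_{M\setminus\alpha}\to\beta$ and $g^{-k_j}|_{M\setminus\beta}\to\alpha$ locally uniformly. Since forward $g$-orbits of points of $B$ stay in $\inte B$ and backward $g$-orbits of points of $A$ stay in $\inte A$, we get $\beta\in B$ and $\alpha\in A$, hence $\alpha\ne\beta$. Comparing $g^{k_j+1}=g\circ g^{k_j}=g^{k_j}\circ g$ at a point of $M$ avoiding $\alpha$ and $g^{-1}(\alpha)$ shows $g(\beta)=\beta$, and symmetrically $g(\alpha)=\alpha$; and if $g(x)=x$ with $x\ne\alpha$ then $x=g^{k_j}(x)\to\beta$, forcing $x=\beta$. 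Thus $g$ fixes the two distinct points $\alpha,\beta$ and $\mathrm{Fix}(g)=\{\alpha,\beta\}$.

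It remains to upgrade the subsequential north--south convergence to convergence along all of $\N$. Any subsequence of $(g^k)$ has a further subsequence converging locally uniformly; its limit pair again consists of two distinct points fixed by $g$, hence equals $\{\alpha,\beta\}$, and the roles of $\alpha$ and $\beta$ cannot be interchanged, for otherwise the attracting point of that sub-limit would be $\alpha$, while $g^m(B)\subseteq\inte B$ for all $m\ge 1$ forces that point to lie in $B$, contradicting $A\cap B=\emptyset$. Therefore every subsequence of $(g^k)$ sub-converges to the pair with $\alpha$ repelling and $\beta$ attracting, which means $g^n|_{M\setminus\alpha}\to\beta$ locally uniformly; taking $a:=\alpha$ and $b:=\beta$ finishes the proof. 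I expect the main obstacle to be exactly this last step, namely excluding ``rotation-like'' behaviour along subsequences and pinning down $\mathrm{Fix}(g)$; a secondary nuisance is confirming that $g$ has infinite order in the presence of possible torsion and keeping track of which neighbourhoods are genuinely available once $M$ is only assumed perfect rather than connected.
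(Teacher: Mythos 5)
The paper offers no proof of this statement --- it is quoted as a black box from Tukia's work \cite{Tukia2} --- so there is nothing internal to compare your argument against. What you have written is, in substance, the standard proof from the convergence-group literature (Gehring--Martin, Tukia): pass to a convergence subsequence, use non-elementarity to separate the repelling and attracting points, run ping-pong on disjoint closed neighbourhoods $A$ and $B$ to obtain a single element $g$ of infinite order with $g(B)\subseteq\inte B$ and $g^{-1}(A)\subseteq\inte A$, locate the fixed pair of $g$ inside $A$ and $B$, and promote subsequential north--south dynamics to convergence of the full sequence $(g^n)$ by the ``every subsequence has a sub-subsequence with the same limit'' device. I checked each step and the argument goes through. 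Two small repairs are needed. First, your opening claim that finiteness of $G$ forces $|M|\le 2$ is false: for finite $G$ the convergence axiom is vacuous, and $\Z/3$ cyclically permuting three points has no invariant subset of at most two points, so with the paper's definition of non-elementary the theorem as literally stated requires $G$ infinite as an extra hypothesis (standard, and automatic under Tukia's limit-set definition of elementary); you should say you are assuming it. Second, $M$ need not be perfect in the absence of minimality; but all you actually use is that the attracting point $b$ is not isolated, and that does hold for a different reason: if $b$ were isolated, the maps $g_n$ would eventually send a compact set with at least two points onto the single point $b$, contradicting injectivity. With those two adjustments your write-up is a complete and correct proof of the cited theorem.
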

Consider such an element $g$ and two closed sets $A^-$, $A^+$ such that $A^-\cap A^+=\emptyset$ and $a\in \inte A^-$, $b\in \inte A^+$. These exist, since $M$ is metrizable. Fix the annulus $A:=\{A^-,A^+\}$ and the annuli system generated by $A$
$$\mathcal{A}:=\{g(\pm A) \mid  g\in G\}.$$
Then, $\mathcal{A}$ is symmetric and $\mathcal{A}/G$ is finite. 
Sun proves in \cite{binsun} that there exists some $N\in \N$ such that $h:=g^N$ is a loxodromic and WPD element.

\section{Random walks on Convergence Groups}

Let $G$ be a non-elementary convergence group on a compact metrizable space $M$. Given a probability measure $\mu$ on $G$ we have, as shown in \cite[Theorems 9.7 and 9.8]{ilya}, that there exists a unique $\mu$-stationary measure $\nu$ on $M$ and that $(M,\nu)$ form a $\mu$-boundary of $G$. 

It will be useful for us to know the behavior of the random walk on the space $T$. 
In particular, we will require in the following result.

\begin{proposition}\label{prop:convtobound}
	Let $G$ be a non-elementary, minimal, convergence group on a compact metrizable space $M$ and let $\mu$ be a probability measure on $G$ such that its support generates $G$. 
	Then for any $x\in T$ the sample paths $(w_nx)$ of the associated random walk converge almost surely to $M$, where the topology on $T\cup M$ is the one defined after Lemma \ref{lemma:kleinanconvtobound}.
\end{proposition}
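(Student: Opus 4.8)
The plan is to transfer the convergence statement from $M$ (where it is already known, via \cite{ilya}) through the space of triples $T$, using the explicit description of the topology on $T\cup M$ given after Lemma \ref{lemma:kleinanconvtobound}. Since $(M,\nu)$ is a $\mu$-boundary, for almost every sample path $(w_n)$ the measures $w_n\nu$ converge weakly to a Dirac mass $\delta_{\xi}$ at some point $\xi=\xi(\omega)\in M$; the hitting point $\xi$ is exactly the limit one expects. So the real content is: for a.e.\ $\omega$ and for any fixed $x\in T$, the sequence $(w_nx)$ enters every basic neighbourhood $\widetilde U$ of $\xi$ eventually, i.e.\ $w_nx$ has at least two of its three components inside $U$ for all large $n$.

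First I would reduce to controlling the components of $w_nx$ directly. Write $x=(a,b,c)$, so $w_nx=(w_na,w_nb,w_nc)$. By the defining property of a convergence group, after passing to a subsequence $(w_{n_k})$ there are points $\alpha,\beta\in M$ with $w_{n_k}|_{M\setminus\alpha}\to\beta$ locally uniformly; the attracting point $\beta$ should be identified with the hitting point $\xi$. The standard argument (as in the hyperbolic case, e.g.\ \cite{Kaimonovich2}) is that since at most one of $a,b,c$ can equal the repelling point $\alpha$ of the subsequence, at least two of $w_{n_k}a,w_{n_k}b,w_{n_k}c$ converge to $\beta$, hence lie in $U$ eventually; that gives $w_{n_k}x\in\widetilde U$. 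To upgrade from a subsequence to the full sequence I would use stationarity: weak convergence $w_n\nu\to\delta_{\xi}$ forces, for every neighbourhood $U$ of $\xi$, that $\nu(w_n^{-1}U)\to 1$, and since $\nu$ has full support (minimality of the action plus $\mu$ generating, as in \cite{ilya}) this prevents two of the three components from escaping any fixed neighbourhood of $\xi$ along any subsequence; concretely, if some component $w_nb_j$ had a subsequential limit $\eta\neq\xi$, one extracts a convergence subsequence whose attracting point is $\xi$ and whose repelling point $\alpha$ can absorb at most one component, contradicting that $w_nb_j\not\to\xi$ for two indices $j$. Then the identification $\xi=w_\infty$ of the limit in $M$ with the boundary hitting point follows, and $w_nx\to\xi$ in the topology on $T\cup M$.

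The main obstacle I expect is the passage from subsequential to genuine convergence: a priori the repelling point $\alpha$ of a convergence subsequence can vary with the subsequence, and one must rule out the scenario where, say, $w_na\to\xi$ along the whole sequence but $w_nb$ and $w_nc$ oscillate, each staying away from $\xi$ along complementary subsequences. This is precisely where one must invoke that $(M,\nu)$ is a $\mu$-boundary rather than merely that individual orbit points converge: the weak convergence $w_n\nu\to\delta_\xi$ is a statement about the bulk of the measure, and combined with $\operatorname{supp}\nu=M$ it pins down \emph{all but at most one} of the components. I would also need to check the converse direction of the topology description (that $\widetilde U$ for $U$ ranging over a neighbourhood basis of $\xi$ is cofinal among neighbourhoods of $\xi$ in $T\cup M$), which is immediate from the definition of the basis, and the measurability of $\omega\mapsto\xi(\omega)$, which is standard. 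Everything else is a routine application of the convergence property and the reverse triangle inequality already recorded in the preliminaries.
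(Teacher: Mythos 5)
Your proposal is correct and follows essentially the same route as the paper's proof: use that $(M,\nu)$ is a $\mu$-boundary to get $w_n\nu\to\delta_\xi$ almost surely, extract a convergence subsequence from any subsequence avoiding a basic neighbourhood $\widetilde U$ of $\xi$, show its attracting point must be $\xi$ because otherwise the weak convergence $w_n\nu\to\delta_\xi$ would fail (the paper runs this step via non-atomicity of $\nu$ where you invoke full support --- both suffice), and observe that the repelling point can absorb at most one of the three components of $x$. The only step you gloss over is that the offending subsequence of $(w_n)$ must consist of infinitely many \emph{distinct} group elements before the convergence property can be applied; the paper dispatches this in one line by noting that otherwise $w_{n}\nu=\delta_\xi$ for some fixed $n$, forcing $\nu$ to be a point mass and contradicting its non-atomicity.
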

\begin{proof}
	As $(M,\nu)$ is a $\mu$-boundary, $w_n\nu$ converges to $\delta_{p(w)}$ for some $p(w)\in M$ almost surely. 
	Assume we have $w\in \Omega$ such that $w_n\nu$ converges to $\delta_p$ but $w_n x$ does not converge to $p$. 
	Then, there exists a neighborhood $U$ of $p$ in $M$ such that $w_{n_k} x\notin \widetilde{U}$ for infinitely many $n_k$. 
	We have $w_{n_k}\nu \to \delta_p$ so if $(w_{n_k})$ has finitely many elements there exists some $s$ such that $w_{n_{s}} \nu=\delta_p$.
	Then, $\nu=\delta_{w_{n_s}^{-1}p}$ which can not be as $\nu$ is non atomic, as proven in  \cite[Theorem 9.4]{ilya}. 
	Hence, $(w_{n_k})$ has infinitely many elements and we may take a convergent subsequence relabeled $(w_i)$. 
	If the attracting point $p'$ of $w_i$ is $p$ then we may take a neighborhood $V$ around the repelling point small enough so its closure contains at most one component of $x$.
	By the definition of convergence action there exists $i_0$ such that for $i>i_0$ we have $w_i(M\setminus V)\subset U$, so for any neighborhood $U$ of $p$ we have $w_ix\in \widetilde{U}$, and $w_ix \to p$.
	
	Assume then that $p'$ is not $p$.
	By definition we have $w_i\nu\to \delta_{p'}$ if for every continuous function $f$ on $M$, 
	$$\int_{x\in M} f(x)w_i\nu\to \int_{x\in M}f(x)\delta_{p'}=f(p').$$ 
	Fix such an $f$ and choose an arbitrary $\epsilon >0$.
	By continuity we can consider a neighborhood $U$ of $p'$ such that $|f(x)-f(p')|<\epsilon$ for all $x\in U$.
	Furthermore, $\nu$ is non-atomic and Borel, and $M$ is metrizable, so $\nu$ is regular and we can consider a neighborhood $V$ around the repelling point $a$ such that $\nu(V)\le \epsilon$.
	By the convergence property there exists $i_0$ such that for $i\ge i_0$ we have $w_{i} M\setminus V\subset U$.
	Hence, 
	\begin{align*}
	\left|\int_{x\in M} f(x)w_i\nu-f(p')\right|&\le\int_{x\in M} |f(x)-f(p')|w_i\nu=\int_{x\in M} |f(w_i x)-f(p')|\nu \\
	&=\int_{x\in M\setminus V} |f(w_ix)-f(p')|\nu+\int_{x\in V} |f(w_ix)-f(p')|\nu \\
	&\le \nu(M\setminus V)\sup_{x\in U}|f(x)-f(p')|+\nu(V)\sup_{x\in M}|f(x)-f(p')|\le \epsilon+\epsilon 2K,
	\end{align*}
	where $K$ is the maximum value of $f$.
	Hence, $w_{i}\nu$ converges to $\delta_{p'} \neq \delta_p$, and we have the contradiction.
\end{proof}

\subsection{The Gromov boundary of $(T,\rho)$}\label{sect:gromboundrayofT}

Applying Theorem \ref{theo:poissonacylindrical} to Sun's construction we get that the Gromov boundary $\partial T$ of $(T,\rho)$, together with the corresponding hitting measure, is a model for the Poisson boundary of our walk.
We would like to relate this with the $\mu$-boundary given by $(M,\nu)$. 
For this, we compare the two possible boundaries using the relations between the corresponding pastings to $T$, that is, the pasting on $T\cup M$ corresponding to Tukia's topology, and the one on $T\cup \partial T$ corresponding to Gromov's topology.

We analyze sequences of $T$ converging to points in the boundaries. 
In particular we first see that given a sequence $(x_n)\subset T$ such that $x_n\to \lambda\in \partial T$ there exists a $p\in M$, which only depends on $\lambda$, such that $x_n\to p$. 
This gives us an application $\phi:\partial T\to M$, which actually is $G$-equivariant and continuous. 
It is not possible to repeat the process in the other direction as some sequences converging to some points in $M$ might be bounded in $(T,\rho)$ and hence not converging to any point in $\partial T$. 
However, we are able to create a continuous inverse restricted to the points of $M$ where such sequences do not exist.
Therefore, we get a homeomorphism between that subset and the corresponding subset of $\partial T$.

Before starting to build $\phi$, we prove some lemmas that we will use on many occasions.

\begin{lemma}\label{lemma:finiteamount}
	Let $G$ be a convergence group on $M$. Also let $A^-,A^+$ be two disjoint closed sets and $B_1,C_1,B_2,C_2\subset M$ be such that $B_i,C_i$ have separating neighborhoods (i.e., there exists open sets such that $B_i\subset V_i$, $C_i\subset U_i$, $\cl V_i\cap \cl U_i=\emptyset$). 
	Then
	$$|\{g\in G \mid gA^-\cap B_1\neq \emptyset,gA^-\cap  C_1\neq \emptyset,gA^+\cap B_2\neq \emptyset,gA^+\cap  C_2\neq \emptyset\}|<\infty.$$
\end{lemma}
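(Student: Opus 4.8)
We need to show that the set of $g \in G$ such that $gA^-$ meets both $B_1$ and $C_1$, and $gA^+$ meets both $B_2$ and $C_2$, is finite. The natural strategy is to argue by contradiction using the convergence property, and the key point is that the four conditions prevent $gA^-$ and $gA^+$ from being "squeezed" near a single point, which contradicts local uniform convergence of a subsequence.

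So suppose the set is infinite. Then it contains an infinite sequence $(g_n)$ of distinct elements, and by the convergence property we may pass to a subsequence with a repelling point $a$ and an attracting point $b$ so that $g_n|_{M\setminus a}$ converges to $b$ locally uniformly. Now I want to exploit that, roughly, a convergence sequence also has "controlled inverses": applying the convergence property to $(g_n^{-1})$, after a further subsequence we get a repelling point $a'$ and attracting point $b'$ for $(g_n^{-1})$. Think of $g_n$ as sending everything outside a shrinking neighborhood of $a$ into a shrinking neighborhood of $b$; dually, $g_n^{-1}$ sends everything outside a shrinking neighborhood of $a'=b$ (in fact the attracting point of $g_n^{-1}$ is the repelling point of $g_n$ and vice versa, up to subsequence — this is a standard fact for convergence groups) into a shrinking neighborhood of $b'=a$.

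Given this, I analyze the sets $A^-$ and $A^+$. Since $A^-\cap A^+=\emptyset$ and these are closed in a compact metrizable (hence normal) space, they have disjoint open neighborhoods; in particular at most one of them contains the point $a'$ (the repelling point of $g_n^{-1}$, equivalently the attracting point of $g_n$). Say $a'\notin A^-$; then $g_n(A^-)$ is eventually contained in any prescribed neighborhood of $b$ (apply local uniform convergence of $g_n$ to the compact set $A^-\subset M\setminus a$ — here I also need $a\notin A^-$, which again follows because at most one of $A^-,A^+$ contains $a$, so after possibly swapping the roles we can assume $a\notin A^-$ and argue symmetrically for $A^+$ with $a'$; if both $a$ and $a'$ land in the same one of $A^-,A^+$ we are fine, and if they are in different ones we just swap names). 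Once $g_n(A^-)$ lies in an arbitrarily small neighborhood $W$ of $b$, it cannot meet both $B_1$ and $C_1$ for large $n$: choose $W$ inside one of the separating open sets $V_1,U_1$ of the pair $B_1,C_1$, then $g_n(A^-)\subset W\subset V_1$ is disjoint from $C_1$, contradicting $g_n(A^-)\cap C_1\neq\emptyset$. (The symmetric argument with $A^+$, $b'=a$, and the pair $B_2,C_2$ handles the other case.)

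The main obstacle is bookkeeping the case distinctions about which of $a,a'$ lies in which of $A^-,A^+$, together with verifying cleanly the fact that for a convergence sequence the repelling/attracting points of $(g_n)$ and $(g_n^{-1})$ are swapped after passing to a subsequence — this is where one must be careful, since it is the only place the full strength of the convergence axiom (applied in both directions) is used. Everything else is a direct application of local uniform convergence plus the separating-neighborhood hypothesis, which is exactly engineered so that a set shrunk into a small neighborhood of a single point cannot simultaneously meet both halves of a separated pair.
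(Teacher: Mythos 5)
Your proof is correct and follows essentially the same route as the paper: pass to a convergence subsequence, observe that the repelling point cannot lie in both of the disjoint closed sets $A^-$ and $A^+$, and squeeze the image of whichever one avoids it into a neighbourhood of the attracting point small enough to miss one member of the separated pair, contradicting the intersection conditions. The detour through the inverse sequence $(g_n^{-1})$ and its attracting/repelling points is superfluous --- the paper never invokes it, and your own argument in the end only uses that the repelling point $a$ of $(g_n)$ lies outside at least one of $A^-$, $A^+$ together with the forward local uniform convergence.
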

\begin{proof}
	Assume we have infinitely many elements on that set. 
	Take a sequence $(g_n)_{n\in \N}$ (with $g_i\neq g_j$ for $i\neq j$) and a convergent subsequence relabeled as $(g_n)$. 
	Assume that the repelling point of the subsequence is not in $A^-$.
	Since $g_{n}A^-$ intersects $B_1$ and $C_1$ we can choose an open set $W$ around the repelling point, not intersecting $A^-$, and $n_0$ big enough such that $g_{n_0}(M-W)\subset O$, where $O$ is an arbitrary open set around the attracting point.
	Therefore $g_{n_0} A^-\subset O$, so taking an open set small enough such that $O\cap V_1=\emptyset$ or $O\cap U_1=\emptyset$ (which we can do, since the closures don't intersect) we get a contradiction to the definition of the set.
	Hence, the repelling point has to be in $A^-$.
	However, doing the same reasoning for $i=2$, we get that the repelling point also has to be in $A^+$, which is not possible since $A^-\cap A^+=\emptyset$.
\end{proof}

This lemma allows us to get some relations between the values of crossratios.
\begin{lemma}\label{lemma:finitedistance}
	Let $B_1,B_2,C_1$ and $C_2$ as in Lemma \ref{lemma:finiteamount}. There exists a $K=K(B_1,B_2,C_1,C_2)<\infty$ such that if $x\in B_1$, $y\in C_1$, $z\in B_2$ and $t\in C_2$, then $(x,y|z,t)\le K$.
\end{lemma}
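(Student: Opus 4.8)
The plan is to reduce the statement to a counting argument based on Lemma \ref{lemma:finiteamount}. Recall that $(x,y|z,t)$ is by definition the largest $n$ for which there is a chain $\{x,y\}<A_1<A_2<\cdots<A_n<\{z,t\}$ with all $A_i$ in the annulus system $\mathcal{A}=\{g(\pm A)\mid g\in G\}$, so it suffices to bound the length of any such chain in terms of $B_1,B_2,C_1,C_2$. First I would record the elementary properties of the order $<$ on annuli: from $A_i<A_{i+1}$ one gets $A_i^-\subseteq A_{i+1}^-$ and $A_i^+\supseteq A_{i+1}^+$, so along the chain the sets $A_i^-$ increase and the $A_i^+$ decrease; the relation $<$ is moreover transitive, and it is irreflexive because the annulus condition $M\setminus(A^-\cup A^+)\neq\emptyset$ forbids $\inte A^+\cup\inte A^-=M$. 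Consequently the annuli $A_1,\dots,A_n$ occurring in a chain are pairwise distinct. Combining the nesting with $\{x,y\}\subseteq\inte A_1^-$ and $\{z,t\}\subseteq\inte A_n^+$ gives $x,y\in A_i^-$ and $z,t\in A_i^+$ for every $i$.

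The main step is to show that the annuli that can occur in a chain of the prescribed type all lie in a single finite subcollection of $\mathcal{A}$ depending only on $B_1,B_2,C_1,C_2$. Write $A_i=g_i(\pm A)$. If $A_i=g_iA$, then $A_i^-=g_iA^-$ contains $x\in B_1$ and $y\in C_1$ while $A_i^+=g_iA^+$ contains $z\in B_2$ and $t\in C_2$; Lemma \ref{lemma:finiteamount} applied to $B_1,C_1,B_2,C_2$ then shows that the set $S_1$ of such $g_i$ is finite. If $A_i=g_i(-A)$, then $A_i^-=g_iA^+$ contains $x,y$ and $A_i^+=g_iA^-$ contains $z,t$, that is, $g_iA^-$ meets $B_2$ and $C_2$ and $g_iA^+$ meets $B_1$ and $C_1$; applying Lemma \ref{lemma:finiteamount} with the pairs $(B_2,C_2)$, $(B_1,C_1)$ in the roles of $(B_1,C_1)$, $(B_2,C_2)$ shows the set $S_2$ of such $g_i$ is finite as well. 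Hence every $A_i$ belongs to $\{g(\pm A)\mid g\in S_1\cup S_2\}$, a finite set, and since the $A_i$ are distinct we conclude $n\le 2(|S_1|+|S_2|)=:K$, a quantity depending only on $B_1,B_2,C_1,C_2$ (and the fixed annulus $A$).

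I do not anticipate a real difficulty here: the argument is essentially a translation of the chain condition into the intersection pattern already handled by Lemma \ref{lemma:finiteamount}. The only points needing attention are bookkeeping ones — checking that the separating-neighbourhood hypotheses of Lemma \ref{lemma:finiteamount} are available in both orientations of $A$ (they are, since $B_1,C_1$ and $B_2,C_2$ are each assumed to admit separating neighbourhoods), and verifying the transitivity and irreflexivity of $<$ so that the distinctness of the annuli in a chain is genuinely justified rather than taken for granted.
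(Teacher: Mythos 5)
Your proof is correct and follows essentially the same route as the paper: translate the chain condition into the fact that each $A_i^-$ meets $B_1$ and $C_1$ while $A_i^+$ meets $B_2$ and $C_2$, split according to whether $A_i = gA$ or $A_i = g(-A)$, and apply Lemma \ref{lemma:finiteamount} to each case. Your explicit verification that $<$ is transitive and irreflexive (hence that the annuli in a chain are distinct) is a detail the paper leaves implicit, but it is not a different argument.
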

\begin{proof}
	Assume the conditions of the lemma are satisfied. We have that
	\begin{align*}
	&|\{A_j\in \mathcal{A}\mid A_j^-\cap B_1\neq \emptyset,A_j^-\cap C_1\neq \emptyset,A_j^+\cap B_2\neq \emptyset,A_j^+\cap C_2\neq \emptyset\}|\le\\
	&|\{g\in G \mid gA^-\cap B_1\neq \emptyset,gA^-\cap C_1\neq \emptyset,gA^+\cap B_2\neq \emptyset,gA^+\cap C_2\neq \emptyset\}|+\\
	&|\{g\in G \mid gA^+\cap B_1\neq \emptyset,gA^+\cap C_1\neq \emptyset,gA^-\cap B_2\neq \emptyset,gA^-\cap C_2\neq \emptyset\}\}|= K < \infty,
	\end{align*}
	where the last inequality follows from Lemma \ref{lemma:finiteamount}. So, since our annuli system $\mathcal{A}$ consists of annuli of the form $(gA^-,gA^+)$ and $(gA^+,gA^-)$, the maximum chain between $\{x,y\}$ and $\{z,t\}$ is smaller than $K$.
\end{proof}
A simple application of this result is as follows. 
Given sequences $(x_n),(y_n),(z_n)$ and $(t_n)$  in $M$ converging to $x,y,z$ and $t$ with $x\neq y$ and $z\neq t$, there exists some $n_0$ such that, for $n\ge n_0$ we have $(x_n,y_n|z_n,t_n)\le K(x,y,z,y)<\infty$.

Another application is as follows.
\begin{lemma}\label{lemma:boundedcrossratiobychangingpoints}
	Let $I\subset M$ be an open set and let $b, c\notin \cl I$ be distinct points. Then, there exists $M=M(\insideset,b,c)<\infty$ such that, for any $x,y,a\in I$ we have $(x,y|a,c)\le (x,y|b,c)+M.$ 
\end{lemma}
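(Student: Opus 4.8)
The plan is to take any chain of annuli bounded below by $\{x,y\}$ and above by $\{a,c\}$ and to show that all but boundedly many of its annuli already form a chain bounded below by $\{x,y\}$ and above by $\{b,c\}$, the bound being supplied by Lemma~\ref{lemma:finitedistance}. Precisely, it suffices to show that every chain $\{x,y\}<A_1<A_2<\dots<A_n<\{a,c\}$ in $\mathcal{A}$ satisfies $n\le(x,y|b,c)+M$ for a suitable $M=M(I,b,c)$; taking the supremum over chains then gives the lemma (trivially if $(x,y|b,c)=\infty$). Fix such a chain. Since $A_i<A_{i+1}$ gives $\inte A_i^+\supseteq\inte A_{i+1}^+$ and $\inte A_i^-\subseteq\inte A_{i+1}^-$, the interiors $\inte A_j^+$ are nested decreasingly and the $\inte A_j^-$ increasingly; in particular $x,y\in\inte A_1^-\subseteq\inte A_j^-$ and $c\in\inte A_n^+\subseteq\inte A_j^+$ for every $j$.

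Let $k$ be the largest index with $b\in\inte A_k^+$, or $k=0$ if there is none; by the nesting, $b\in\inte A_j^+$ exactly for $j=1,\dots,k$. For $j\le k$ we then have $\{b,c\}\subseteq\inte A_j^+$, so the truncated chain $\{x,y\}<A_1<\dots<A_k<\{b,c\}$ shows $(x,y|b,c)\ge k$, and it remains to bound $n-k$. To that end I look at the tail: from $b\notin\inte A_{k+1}^+$ and $\inte A_{k+1}^+\cup\inte A_{k+2}^-=M$ we get $b\in\inte A_{k+2}^-$ whenever $k+2\le n$, hence $b\in\inte A_j^-$ for all $j\ge k+2$. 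Thus $\{b,x\}\subseteq\inte A_{k+2}^-$, and the chain $\{b,x\}<A_{k+2}<\dots<A_n<\{a,c\}$ gives $(b,x|a,c)\ge n-k-1$. Now I apply Lemma~\ref{lemma:finitedistance}: since $b,c\notin\cl I$ and $M$ is compact metrizable, the pairs $\{b\},\cl I$ and $\cl I,\{c\}$ each admit separating neighbourhoods, so with $B_1=\{b\}$, $C_1=\cl I$, $B_2=\cl I$, $C_2=\{c\}$ we obtain a finite constant $K_1=K(\{b\},\cl I,\cl I,\{c\})$ with $(b,x|a,c)\le K_1$ for all $x,a\in I$. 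Hence $n-k-1\le K_1$, so $n\le k+K_1+1\le(x,y|b,c)+K_1+1$; the residual cases $k\in\{n-1,n\}$, where the tail is too short to form a genuine chain, give $n-k\le 1$ directly. So $M:=K_1+1$ works.

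The argument is essentially bookkeeping with annulus chains, as recorded above; the one substantive move is the translation of the tail — replacing the endpoint pair $\{a,c\}$ by $\{b,x\}$ — which turns it into a crossratio that Lemma~\ref{lemma:finitedistance} bounds uniformly over $x,a\in I$. I expect the only point genuinely requiring care to be verifying the separating-neighbourhood hypothesis of Lemmas~\ref{lemma:finiteamount}--\ref{lemma:finitedistance} for the pairs $\{b\},\cl I$ and $\cl I,\{c\}$; this holds because $M$ is compact metrizable and $b,c\notin\cl I$, so each of these is a pair of disjoint closed sets and can be enclosed in open sets with disjoint closures.
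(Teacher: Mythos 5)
Your proof is correct and follows essentially the same strategy as the paper's: take a maximal chain witnessing $(x,y|a,c)$, keep the portion adjacent to $\{x,y\}$ as a chain witnessing a lower bound for $(x,y|b,c)$, and bound the discarded portion adjacent to $\{a,c\}$ by a uniform constant coming from Lemma \ref{lemma:finiteamount}. The only difference is bookkeeping: the paper cuts after a fixed number $K+1$ of annuli counted from the $\{a,c\}$ end, using an auxiliary open set $O$ with $\cl I\subset O$ and $b,c\in O^c$ to show the rest of the chain starts at $\{b,c\}$, whereas you cut at the last annulus whose positive side still contains $b$ and control the leftover tail by recognizing it as a chain for the crossratio $(b,x|a,c)$, which Lemma \ref{lemma:finitedistance} bounds uniformly in $x,a\in I$.
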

\begin{proof}
	Consider an open set $\outsideset\supset \insideset$ such that $\cl \insideset\subset \outsideset$ and $b,c\in O^c$. We can choose such a set, as $M$ is metrizable. By Lemma \ref{lemma:finiteamount} (taking $B_1=B_2=I,C_1=C_2=O^c$) we have
	\begin{equation}\label{eq:finite1}
	|\{A_j\in \mathcal{A} \mid \{a,c\}\subset A_j^-,A^+_j\cap \insideset\neq \emptyset, A^+_j\cap \outsideset^c\neq\emptyset \}|\le K.
	\end{equation}
	Assume $(a,c|x,y)=r\ge K+2$. Then we have a sequence
	$$\{a,c\}<A_1<\ldots <A_r <\{x,y\}.$$
	We will show now that $A^+_{K+1}$ is contained in $\outsideset$.
	To see this, recall that the definition of the relation between annuli implies $A^+_i\supset A^+_{i+1}$. Hence, if $A^+_i$ is contained in $\outsideset$ for some $i\le K$ we are done.
	Assume $A^+_i$ is not contained in $\outsideset$ for any $i\le K$, that is, $A_i^+\cap \outsideset^c\neq \emptyset$ for all $i\le K$.
	We have, not just for $i\le K$ but for all $i\le r$, that $A^+_i$ intersects $\insideset$, since $x\in \insideset$, and that $A^-_i$ contains $a$ and $c$.
	Hence, by \eqref{eq:finite1}, $A^+_{K+1}$ can not intersect $\outsideset^c$.
	By definition of the relation, $A_{K+2}^-$ contains $\outsideset^c$, so we have the chain
	$$\{b,c\}<A_{K+2}<\ldots <A_r <\{x,y\}.$$
	Then, since $(x,y|b,c)$ is the length of the maximal chain between $\{x,y\}$ and $\{b,c\}$, we have $(x,y|b,c)\ge r-(K+2)$ or, reorganizing, $(x,y|a,c)\le (x,y|b,c)+K+2$ and $M=K+2$.
\end{proof}
In particular, if $x_n$ and $y_n$ converge to some $x$, and $b,c$ are different from $x$, then there is some $n_0$ such that, for $n\ge n_0$, we have that $x_n$ and $y_n$ are in some open set as in the lemma, so we have $(x_n,y_n|a,c)\le (x_n,y_n|b,c)+K$.

Finally, the last application of the previous results is as follows.
\begin{lemma}\label{lemma:crossratiogromov}
	Let $I$, $I'$ be open subsets of $M$ with disjoint closures and let
	$a\in I$,  $b \notin \cl I\cup I'$ and $c\in I'$. Then there exists $K=K(I,I',a,b,c)<\infty$ such that, for all $w,x\in I$ and $y,z\in I'$,
	\[
	(w,x|y,z)\ge (w,x|b,c)+(a,b|y,z)-K.
	\]
\end{lemma}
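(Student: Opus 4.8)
The plan is to prove the lower bound by exhibiting a single explicit chain of annuli from $\{w,x\}$ to $\{y,z\}$ whose length is at least $(w,x|b,c)+(a,b|y,z)-K$; since $(w,x|y,z)$ is by definition the length of the longest such chain, this suffices. Write $p=(w,x|b,c)$ and $q=(a,b|y,z)$ and fix maximal realizing chains
\[
\{w,x\}<A_1<\cdots<A_p<\{b,c\}, \qquad \{a,b\}<B_1<\cdots<B_q<\{y,z\}.
\]
Recall that a chain relation forces $A_i^-\subset A_{i+1}^-$ and $A_i^+\supset A_{i+1}^+$, so along the first chain the sets $A_i^-$ grow and the sets $A_i^+$ shrink, and symmetrically for the second. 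Since $\cl \insideset$, $\cl \insideset'$ and $\{b\}$ are pairwise disjoint closed sets and $M$ is metrizable, I first choose open sets $U\supset\cl\insideset$ and $U'\supset\cl\insideset'$ with $\cl U\cap\cl U'=\emptyset$ and $b\notin\cl U\cup\cl U'$, together with small neighbourhoods $N_a\ni a$, $N_b\ni b$, $N_c\ni c$ with the separations required below.

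Next I truncate. I claim only boundedly many $A_i$ satisfy $A_i^-\cap(M\setminus U)\neq\emptyset$: any such annulus has $A_i^-$ meeting both $\insideset$ (through $w$) and $M\setminus U$, while $A_i^+\supset\{b,c\}$ meets both $N_c$ and $N_b$. As $(\insideset,M\setminus U)$ and $(N_c,N_b)$ are pairs with separating neighbourhoods, the counting argument of Lemma \ref{lemma:finitedistance} (an application of Lemma \ref{lemma:finiteamount}) bounds the number of annuli of $\mathcal{A}$ meeting these four sets in the prescribed way by a constant $K_1=K_1(\insideset,\insideset',b,c)$. Because the $A_i^-$ increase, the indices with $A_i^-\subset U$ form an initial segment $\{1,\dots,i^*\}$ with $p-i^*\le K_1$. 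Symmetrically, using that $B_j^-\supset\{a,b\}$ and $B_j^+\ni y,z$, the indices with $B_j^+\subset U'$ form a terminal segment $\{j^{**},\dots,q\}$ with $j^{**}-1\le K_2(\insideset,\insideset',a,b)$.

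The heart of the argument, and the step I expect to be most delicate, is to bridge the surviving pieces, i.e.\ to prove $A_{i^*-1}<B_{j^{**}+1}$, equivalently $M\setminus\inte A_{i^*-1}^+\subset\inte B_{j^{**}+1}^-$. The difficulty is that $A_{i^*-1}^-$ and $B_{j^{**}+1}^+$ are small, so one cannot make $\inte A^+$ and $\inte B^-$ cover $M$ simply by pairing the last annulus of one chain with the first of the other; the gap regions $M\setminus(A^-\cup A^+)$ must be absorbed. I handle this through the chain relations themselves: from $A_{i^*-1}<A_{i^*}$ I get $M\setminus\inte A_{i^*-1}^+\subset A_{i^*}^-\subset U$, and from $B_{j^{**}}<B_{j^{**}+1}$ together with $B_{j^{**}}^+\subset U'$ I get $M\setminus U'\subset M\setminus\inte B_{j^{**}}^+\subset\inte B_{j^{**}+1}^-$. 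Since $\cl U\cap\cl U'=\emptyset$ gives $U\subset M\setminus U'$, chaining these inclusions yields $M\setminus\inte A_{i^*-1}^+\subset U\subset M\setminus U'\subset\inte B_{j^{**}+1}^-$, which is exactly the bridge relation.

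With the bridge in hand, $\{w,x\}<A_1<\cdots<A_{i^*-1}<B_{j^{**}+1}<\cdots<B_q<\{y,z\}$ is a valid chain, all of whose remaining relations are inherited from the two original chains, and its length is $(i^*-1)+(q-j^{**})\ge(p-K_1-1)+(q-K_2-1)=p+q-K$ with $K=K_1+K_2+2$. Finally, the degenerate possibilities (when one chain is essentially entirely truncated, e.g.\ $i^*\le 1$ or $j^{**}+1>q$) are harmless: the same inclusions give $\{w,x\}\subset\inte B_{j^{**}+1}^-$ and $\{y,z\}\subset\inte A_{i^*-1}^+$, so one retains the shorter chains $\{w,x\}<B_{j^{**}+1}<\cdots<\{y,z\}$ and $\{w,x\}<\cdots<A_{i^*-1}<\{y,z\}$, whence $(w,x|y,z)\ge\max(p,q)-O(1)$, and enlarging $K$ by the relevant constant makes the stated inequality hold in those cases too.
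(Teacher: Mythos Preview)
Your proof is correct and follows essentially the same strategy as the paper: enlarge $I,I'$ to open sets $U,U'$ (the paper's $O,O'$) with disjoint closures avoiding $b$, truncate each maximal chain by a bounded amount using Lemma~\ref{lemma:finiteamount}, and concatenate the surviving pieces. Your write-up is more explicit than the paper's about the bridge relation $A_{i^*-1}<B_{j^{**}+1}$ and about the degenerate cases, but the underlying mechanism is identical.
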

\begin{proof}
	Let $O,O'$ be open sets with $\cl I \subset O$, $\cl I'\subset O'$, $\cl O \cap \cl O'= \emptyset$ and $b$ is not contained in the closure of any. Since $M$ is metrizable, we can choose such sets.
	Let $(w,x|b,c)=r$. By definition of the crossratio, we have the sequence
	$\{w,x\}<A_1^1<\ldots <A_r^1<\{b,c\}.$
	Hence, doing the same reasoning as before 
	$$\{w,x\}<A_1^1<\ldots <A_{r-K_1-2}^1<\outsideset^c,$$
	where $K_1$ is the constant obtained by applying Lemma \ref{lemma:finiteamount}.
	If $(a,b|y,z)=s$, for the sequence $\{a,b\}<A^2_1<\ldots <A^2_1<\{y,z\},$
	we will have 
	$$(\outsideset')^c<A_{K_2+2}^2<\ldots <A_s^2<\{y,z\},$$
	where $K_2$ is once again the constant obtained by applying Lemma \ref{lemma:finiteamount}.
	Therefore, since $\outsideset\subset (\outsideset')^c$ and $\outsideset^c\supset \outsideset'$, we can concatenate both sequences and we get
	$$\{w,x\}<A_1^1<\ldots <A_{r-K_1-2}^1<A^2_{K_2+2}<\ldots <A^2_s<\{y,z\}.$$
	Hence, the lemma is satisfied with $K(I,I',a,b,c)= K_1+K_2+4$.	
\end{proof}

\subsubsection{One direction, from the Gromov boundary $\partial T$ to $M$}
Given $\lambda\in \partial T$ take any sequence $(a_n)=((a^1_n,a^2_n,a^3_n))$ such that $(a_n)\sim \lambda$.
Since each component of the sequence is in the compact space $M$ we can take a subsequence, relabeled $a_n$, such that $a^1_n\to a^1$, $a^2_n\to a^2$ and $a^3_n\to a^3$ as $n\to \infty$.
If $(a^1,a^2,a^3)\in T$ then we can apply Lemma \ref{lemma:finitedistance} to see that every possible combination of $\rho(x,a_n)$ is bounded in $n$, and hence $a_n$ does not converge to a point in the Gromov boundary.
Therefore, at least two components converge to the same point $p$. We will define the function $\phi(\lambda):=p$, so we have to show that $p$ does not depend on the sequence $(a_n)$ (or the convergent subsequence).

\begin{lemma}\label{lemma:welldefined}
	Given $\lambda\in \partial T$ and a representing sequence $(a_n)_{n\in \N}=((a_n^1,a_n^2,a_n^3))_{n\in\N}\subset T$ such that two elements converge to $p$, then every other $(b_n)\subset T$ converging to $\lambda$ with Gromov topology converges to $p$ with Tukia's topology. In particular, we can define the function $\phi:\partial T \to M$ by $\phi(\lambda)=p$.
\end{lemma}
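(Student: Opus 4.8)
The plan is to show that $p$ is an invariant of $\lambda$ alone by fusing any two candidate representing sequences into a single sequence that still represents $\lambda$, and then deriving a contradiction if the two limit points disagree. So suppose $(a_n)$ is as in the statement, with (after relabelling) $a_n^1\to p$ and $a_n^2\to p$, while $(b_n)\subset T$ also converges to $\lambda$ in the Gromov topology but, passing to a convergent subsequence, has two components tending to some $q\neq p$. By definition of the Gromov boundary, $(a_n)\sim(b_n)$, i.e.\ $(a_n\cdot b_m)_x\to\infty$ as $m,n\to\infty$ for a fixed basepoint $x\in T$. Recall that $\rho(a_n,b_m)=\max\{(a_n^i,a_n^j\mid b_m^k,b_m^l):i\neq j,k\neq l\}$, so $(a_n\cdot b_m)_x\to\infty$ forces, for each large $n$, the existence of indices realizing a long crossratio chain between a pair of components of $a_n$ and a pair of components of $b_m$.

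The key step is to play off the two limit behaviours against Lemma \ref{lemma:finitedistance} (via Lemma \ref{lemma:boundedcrossratiobychangingpoints}). First I would reduce to the pairs that actually matter. Two of the $a_n$-components converge to $p$; I claim the Gromov product $(a_n\cdot b_m)_x$ is, up to a bounded additive error, governed by crossratios whose first pair is $\{a_n^1,a_n^2\}$ (both $\to p$). Indeed, if the third component $a_n^3$ also converges to $p$, then all pairs behave the same; if $a_n^3\to p'\neq p$, then by Lemma \ref{lemma:boundedcrossratiobychangingpoints}, for $n$ large any crossratio $(a_n^i,a_n^j\mid \cdot,\cdot)$ involving $a_n^3$ differs by at most a constant from one with its first pair replaced by $\{a_n^1,a_n^2\}$, provided the target pair stays away from a neighbourhood of $p$ — and one can always arrange the target pair of $b_m$ to lie in a fixed open set $I'$ with $p\notin\cl I'$, because at most one component of $b_m$ (for $m$ large) lies near $p$ while the maximizing pair for $\rho$ must consist of two components. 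Symmetrically, the second pair can be taken to be two $b_m$-components converging to $q$, sitting in an open set $I'$ with $q\in I'$, $\cl I'\cap\{p\}=\emptyset$.

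Now fix open sets $I\ni p$ and $I'\ni q$ with disjoint closures. For $m,n$ large, $a_n^1,a_n^2\in I$ and two components of $b_m$, say $b_m^1,b_m^2$, lie in $I'$. Lemma \ref{lemma:finitedistance} (with $B_1=C_1=\cl I$, $B_2=C_2=\cl I'$) gives a uniform bound $(a_n^1,a_n^2\mid b_m^1,b_m^2)\le K(I,I',I,I')<\infty$, and combining with the two applications of Lemma \ref{lemma:boundedcrossratiobychangingpoints} above shows $(a_n^i,a_n^j\mid b_m^k,b_m^l)$ is bounded by a constant independent of $m,n$ for \emph{every} choice of $i\neq j$, $k\neq l$. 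Hence $\rho(a_n,b_m)$ is bounded, so $(a_n\cdot b_m)_x$ does not go to infinity, contradicting $(a_n)\sim(b_m)$. Therefore $q=p$, so $p$ depends only on $\lambda$ and $\phi(\lambda):=p$ is well defined.

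The main obstacle I expect is the bookkeeping in the reduction step: making sure that when I replace an "inconvenient" component of $a_n$ (resp.\ $b_m$) by a convenient one using Lemma \ref{lemma:boundedcrossratiobychangingpoints}, the hypotheses of that lemma (the replaced point and the target points lying outside the closure of a fixed open set containing the retained pair) genuinely hold for all large indices. This is where one needs the observation that the $\rho$-maximizing pair always consists of two of the three components, so at most one component is "stuck" near the limit point, together with the freedom — guaranteed by metrizability of $M$ — to separate $p$ from $q$ (and from any stray limit $p'$ of a third component) by open sets with disjoint closures. Once the separation is set up, the three lemmas combine mechanically.
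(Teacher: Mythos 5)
There is a genuine gap, and it occurs at the heart of your argument rather than in the bookkeeping you flag as the main obstacle.

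First, the application of Lemma \ref{lemma:finitedistance} with $B_1=C_1=\cl I$ and $B_2=C_2=\cl I'$ is not legitimate: that lemma requires $B_1$ and $C_1$ to admit \emph{separating} neighbourhoods (open $V_1\supset B_1$, $U_1\supset C_1$ with $\cl V_1\cap\cl U_1=\emptyset$), which is impossible when $B_1=C_1$. And the conclusion you draw from it is false: the crossratio $(a_n^1,a_n^2\mid b_m^1,b_m^2)$ with $a_n^1,a_n^2\to p$ and $b_m^1,b_m^2\to q\neq p$ is precisely the quantity that is typically \emph{unbounded} --- it is what makes $(T,\rho)$ an unbounded space with a nonempty Gromov boundary in the first place. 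Lemma \ref{lemma:finitedistance} only controls crossratios whose first pair stays separated and whose second pair stays separated; it says nothing when both points of a pair collapse to the same limit.

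Second, even granting the (false) boundedness of $\rho(a_n,b_m)$, the final inference is inverted. Since $(a_n)$ and $(b_m)$ both escape to $\partial T$, we have $\rho(x,a_n)\to\infty$ and $\rho(x,b_m)\to\infty$; if $\rho(a_n,b_m)$ stayed bounded, then $2(a_n\cdot b_m)_x=\rho(x,a_n)+\rho(x,b_m)-\rho(a_n,b_m)$ would tend to $+\infty$, which is consistent with (indeed confirms) $(a_n)\sim(b_m)$ rather than contradicting it. To reach a contradiction you need the opposite estimate: a \emph{lower} bound on $\rho(a_n,b_m)$ of roughly $\rho(x,a_n)+\rho(x,b_m)$, so that the Gromov product stays bounded and the sequences cannot be equivalent. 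This is exactly what the paper's Lemma \ref{lemma:crossratiogromov} supplies (chains from $\{a_n^1,a_n^2\}$ toward a fixed pair and from a fixed pair toward $\{b_m^1,b_m^2\}$ concatenate), after choosing the basepoint $x=(p,t,p')$ so that Lemma \ref{lemma:boundedcrossratiobychangingpoints} bounds $\rho(x,a_n)$ and $\rho(x,b_m)$ from above by the two crossratios being summed. Your reduction step (handling the third components via Lemma \ref{lemma:boundedcrossratiobychangingpoints}) is in the right spirit, but without the concatenation lower bound the argument cannot close.
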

\begin{proof}
	Assume the lemma is false, i.e., that there exists $(b_n)=(b^1_n,b^2_n,b^3_n)$ with $(a_n\cdot b_n)_{x}\to \infty$ for some $x\in T$, but $(b_n)$ does not converge to $p$ in Tukia's topology.
	By definition of the topology, given an open set $V$ around $p$ there exists a subsequence of $(b_n)$ (which we relabel as $(b_n)$) such that there are always two components outside that open set.
	Hence, by compactness, we can take again a converging subsequence of $(b_n)$ (relabelled again $(b_n)$) such that two components converge to points outside $V$ and, since $\rho(x,b_n)\to \infty$, we can assume they converge to the same point $p'\neq p$. Fixing some $t\in M$ different from $p$ and $p'$, and denoting $x=(p,t,p')$, we compute the Gromov product
	\[
	2(a_n\cdot b_n)_{x}=\rho(a_n,x)+\rho(x,b_n)-\rho(a_n,b_n).
	\]
	Assume, reordering the components if necessary, that the first two components of $a_n$ and $b_n$ are the ones that converge to $p$ and $p'$ respectively.
	If the third component of $(a_n)$ converges to $\alpha\neq p$, then by Lemma \ref{lemma:finitedistance} we get that
	\[
	\max\left\{(a^i_n,a^3_n|v,w):v,w\in \{p,t,p'\}, i\in\{1,2\}\right\}<C.
	\]
	Since $\rho(a_n,x)\to \infty$, we have $\rho(a_n,x)=\max\left\{(a^1_n,a^2_n|v,w):v,w\in \{p,t,p'\}\right\}$ for $n$ greater than some $n_0$.
	If $a^3_n\to p$, we can reorder the components of each $a_n$ such that the distance to $x$ is always $\max\left\{(a^1_n,a^2_n|v,w):v,w\in \{p,t,p'\}\right\}$.

	Applying Lemma \ref{lemma:boundedcrossratiobychangingpoints}, we get $K_1<\infty$ and $n_0$ such that, for $n\ge n_0$, we have $(a^1_n,a^2_n|p,p')\le (a^1_n,a^2_n|t,p')+K_1$ and $(a^1_n,a^2_n|p,t)\le (a^1_n,a^2_n|t,p')+K_1$.
	Hence, for $n$ big enough, $\rho(a_n,x)\le (a_n^1,a_n^2|t,p')+K_1$.
	Doing the same reasoning for $b_n$ we get, for $n$ big enough,
	$$2(a_n\cdot b_n)_{x}\le (a^1_n,a^2_n|t,p')+(p,t|b^1_n,b^2_n)-\rho(a_n,b_n)+K_1+K_2.$$
	
	To bound from below the remaining term we use that $\rho(a_n,b_n)\ge (a^1_n,a^2_n|b^1_n,b^2_n)$. There exists some $n_0$ such that for $n\ge n_0$ the sets $\{a^1_n,a^2_n\}$ and $\{b^1_n,b^2_n\}$ are inside some sets $I$, $I'$ as described in Lemma \ref{lemma:crossratiogromov}, with $p,t'$ and $p'$ acting as $a,b,c$. Hence, applying the aforementioned lemma we get 
	\[(a^1_n,a^2_n|b^1_n,b^2_n)\ge (a^1_n,a^2_n|t,p')+(p,t|b^1_n,b^2_n)-K_3,
	\]
	from which it follows that
	$$2(a_n\cdot b_n)_{x}\le K_1+K_2+K_3.$$
	Since the bound does not depend on $n$ we have, by the reverse triangle inequality, $(a_n \cdot b_n)_{x}\ge \min((a_n\cdot \lambda)_x,(\lambda\cdot b_n)_x)+O(r)$. 
	Since $(a_n\cdot \lambda)_x$ goes to infinity, $(\lambda\cdot b_n)_x\le (a_n\cdot b_n)_x+O(r)\le K+O(r),$ and hence $b_n$ does not converge to $\lambda$ in the Gromov topology.
\end{proof}

As $\phi$ has been defined by using the convergence of the sequence we can get the following.

\begin{lemma}\label{lemma:equivandcont}
	The map $\phi$ is $G$-equivariant and continuous.
\end{lemma}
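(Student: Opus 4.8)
The plan is to verify $G$-equivariance and continuity separately, in both cases exploiting that $\phi(\lambda)$ is computed from any sequence in $T$ converging to $\lambda$ in the Gromov topology. For equivariance: fix $g \in G$ and $\lambda \in \partial T$. Pick a sequence $(a_n) \subset T$ with $(a_n) \sim \lambda$ and two components converging to $p = \phi(\lambda)$. Since $G$ acts by isometries on $(T,\rho)$, the action extends to a homeomorphism of $\partial T$, so $(g a_n) \sim g\lambda$; and since $G$ acts on $M$ by homeomorphisms, the two corresponding components of $g a_n$ converge to $g p$. By Lemma \ref{lemma:welldefined}, $\phi$ is well-defined on any representing sequence, so $\phi(g\lambda) = g p = g\phi(\lambda)$, which is exactly equivariance.

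For continuity: let $\lambda_k \to \lambda$ in $\partial T$; I want $\phi(\lambda_k) \to \phi(\lambda)$ in $M$. First I would use a diagonal argument: for each $k$ choose a sequence $(a^{(k)}_n)_n \subset T$ representing $\lambda_k$ with two components converging to $\phi(\lambda_k)$; by passing to suitable indices $n(k)$ one can build a single sequence $x_k := a^{(k)}_{n(k)} \in T$ such that $x_k \to \lambda$ in the Gromov topology (this is possible because $\lambda_k \to \lambda$ means $(\lambda_k \cdot \lambda)_x \to \infty$, and one can pick $n(k)$ large enough that $(x_k \cdot \lambda_k)_x \ge k$ as well, then use the reverse triangle inequality to get $(x_k \cdot \lambda)_x \to \infty$) while simultaneously $\rho(x_k, x)$ is large and $x_k$ has two components within $1/k$ (in some fixed metric on $M$) of $\phi(\lambda_k)$. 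Now $(x_k) \subset T$ converges to $\lambda$ in the Gromov topology, so by the very definition of $\phi$ together with Lemma \ref{lemma:welldefined}, $(x_k)$ converges to $\phi(\lambda)$ in Tukia's topology. Unwinding Tukia's topology: for every open neighbourhood $U$ of $\phi(\lambda)$ in $M$, eventually $x_k \in \widetilde U$, i.e. $x_k$ has at least two components in $U$. But for $k$ large these two ``converging'' components of $x_k$ are within $1/k$ of $\phi(\lambda_k)$, so $\phi(\lambda_k)$ is within $1/k$ of a point of $U$; letting $U$ shrink to $\phi(\lambda)$ forces $\phi(\lambda_k) \to \phi(\lambda)$.

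The main obstacle I expect is the bookkeeping in the diagonal construction — specifically ensuring that the single sequence $(x_k)$ simultaneously (i) converges to $\lambda$ in the Gromov topology, (ii) stays $\rho$-unbounded so that Tukia-convergence is governed by the ``two components merging'' phenomenon rather than by $x_k$ itself being bounded, and (iii) has its two merging components genuinely tracking $\phi(\lambda_k)$. Each is achievable by choosing $n(k)$ large enough along the sequence $(a^{(k)}_n)_n$, since that sequence already has all three properties in the limit as $n \to \infty$; the care is only in choosing the thresholds consistently. One subtlety worth flagging: if $\phi(\lambda_k)$ fails to converge, pass to a subsequence along which it converges to some $q \in M$ (by compactness of $M$) and run the argument to conclude $q = \phi(\lambda)$; since every subsequential limit equals $\phi(\lambda)$, the whole sequence converges. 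Alternatively, one can avoid the metric estimates entirely by arguing by contradiction: if $\phi(\lambda_k) \not\to \phi(\lambda)$, extract a subsequence with $\phi(\lambda_k) \to q \neq \phi(\lambda)$, build $(x_k)$ as above with two components near $q$, note $x_k \to \lambda$ in Gromov topology hence $x_k \to \phi(\lambda)$ in Tukia's topology, and derive a contradiction by taking disjoint neighbourhoods of $q$ and $\phi(\lambda)$.
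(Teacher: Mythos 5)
Your proof is correct, and while the equivariance half coincides with the paper's (both just note that the $G$-action preserves convergence in each of the two topologies), your continuity argument takes a genuinely different route. The paper argues by contradiction at the level of the crossratio estimates: it extracts a subsequential limit $p'\neq p$ of $(\phi(\lambda_n))$, fixes disjoint neighbourhoods $I\ni p$, $I'\ni p'$, and re-runs the quantitative bounds from the proof of Lemma \ref{lemma:welldefined} (via Lemmas \ref{lemma:finitedistance}, \ref{lemma:boundedcrossratiobychangingpoints} and \ref{lemma:crossratiogromov}) on each pair of representing sequences $(a^n_m)_m$, $(b_m)_m$ to bound the Gromov product $(a^n_m\cdot b_m)_x$ uniformly, contradicting $\lambda_n\to\lambda$. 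You instead use the \emph{statement} of Lemma \ref{lemma:welldefined} as a black box: a diagonal sequence $x_k=a^{(k)}_{n(k)}$ chosen so that $(x_k\cdot\lambda_k)_x\ge k$ converges to $\lambda$ in the Gromov topology by the reverse triangle inequality, hence to $\phi(\lambda)$ in Tukia's topology, and the pigeonhole on the three components (two lie in any given neighbourhood $U$ of $\phi(\lambda)$, two lie within $1/k$ of $\phi(\lambda_k)$, so at least one lies in both) forces $\phi(\lambda_k)\to\phi(\lambda)$. Your version is softer and avoids reopening the earlier proof, at the cost of the bookkeeping in the diagonal extraction, which you handle correctly; the paper's version keeps everything inside the crossratio machinery it has already set up. Do make the pigeonhole step explicit, since the two components witnessing Tukia convergence need not a priori be the two tracking $\phi(\lambda_k)$.
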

\begin{proof}
	Since $G$ respects the convergences to the boundaries on $T\cup \partial T$ and $T\cup M$, we get that $\phi$ is $G$-equivariant. 
	That is, if $x_n\to \lambda\in\partial T$ then $gx_n\to g\lambda \in \partial T$ and if $x_n\to p\in M$, $gx_n\to gp \in M$, so $\phi(g\lambda)=g\phi(\lambda)$.
	
	To see the continuity we take $\lambda_n\to \lambda$ and assume $\phi(\lambda_n)$ does not converge to $p:=\phi(\lambda)$.
	Since $(\phi(\lambda_n))$ is contained in a compact set we can take a convergent subsequence converging to some $p'\neq p$, which we relabel as $(\phi(\lambda_n))$.
	Consider sequences $(a^n_m)_m$ associated to each $\lambda_n$, and a sequence $(b_m)_m$ associated to $\lambda$. 
	The same reasoning as in in the last part of the proof of Lemma \ref{lemma:welldefined} can be repeated for each pair of sequences $(a^n_m)_m$ and $(b_m)_m$. 
	Since $\phi(\lambda_n)$ converges to $p'\neq\phi(\lambda)$, we can take $\insideset$ and $\insideset'$ to be fixed neighbourhoods of the points $p$ and $p'$ respectively with disjoint closures, and such that the chosen point $t$ is not contained in the closures. 
	There exists $n_0$, such that for $n\ge n_0$, $m\ge m(n)$  we have $a^n_m\in I$ and $b_m\in I'$.
	Then, doing the same reasoning as in the last proof, we get the bound 
	$$(a^n_m \cdot b_m)_{(p,t,p)}\le K(I,I',p,t,p'),$$
	which contradicts the hypothesis $\lambda_n\to \lambda$.
\end{proof}

\subsubsection{Finite boundary points}

As at the beginning of the section, we would like to be able to do the same for going from $M$ to $\partial T$.
That is, given $p\in M$, take any sequence $(x_n)\subset T$ with $x_n\to p$ and see that, in $T\cup \partial T$, $x_n\to\lambda\in \partial T$.
However, there may be problematic points for which $\rho(x_0,x_n)$ can be bounded.
We give the following definition.
\begin{definition}
	Let $p\in M$ and let $\rho$ be a quasimetric on $T$ defined using Sun's construction.
	We say that $p$ is a \emph{finite boundary} point if there exists a sequence $(x_n)\subset T$ and a number $R<\infty$ such that $x_n\to p$ in Tukia's topology, and $\rho(x_0,x_n)\le R$ for every $n$.
\end{definition}
It is immediate to check, using the triangle inequality, that the notion of finite boundary point does not depend on the basepoint $x_0$. 
It might, however, depend on the choice of quasimetric, that is, on the annulus we chose to define it.
The set of all finite boundary points will be called \emph{finite boundary} and will be denoted $\fsp$.
If a point is not a finite boundary point, we will call it \emph{infinite boundary} point, and the set of all infinite boundary points will be denoted $\nsp$($=\fsp^c$).

To see that $\fsp$ is not empty in some cases we recall a classical definition in the context of Kleinian groups, which can be easily generalized for convergence groups.
\begin{definition}
	Let $p\in M$. We say that $p$ is a \emph{conical limit} point if there exist $a,b\in M$ distinct, and a sequence $(g_n)\subset G$ such that $g_np$ converges to $a$ but $g_nx$ converges to $b$ for all $x\neq p$.
\end{definition}
On the context of uniformly convergence group Tukia shows in \cite{Tukia} that parabolic fixed points (that is, points fixed by some parabolic element) are exactly the non conical limit points.
We will show that non conical limit points are finite boundary points, and hence that $\fsp$ can be non empty.

First, we introduce a sufficient condition for being a finite boundary point. In Lemma \ref{lemma:boundp} we will see that it is actually an equivalent condition.

\begin{lemma}\label{lemma:noboundp}
	Let $p\in M$. If there are two points $a,b\in M\setminus p$ and 
	$R>0$ such that $(a,b|p)< R$, 
	then $p\in \fsp$.
\end{lemma}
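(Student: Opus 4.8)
The plan is to write down an explicit bounded sequence in $T$ converging to $p$. I first record that $M$ has no isolated points: by minimality every orbit is dense, so an isolated point would make $M$ discrete and hence finite, contradicting non-elementarity. Since $a,b\neq p$ we may therefore fix a point $z$ close to $p$ with $z\notin\{a,b,p\}$, and then choose sequences $c_n,c_n'\in M\setminus\{a,b,p,z\}$, with all terms distinct, so that $c_n\to p$ and $c_n'\to p$; exactly how the $c_n,c_n'$ are chosen will only matter in the last step. Put
$$x_n:=(c_n,c_n',z)\in T,\qquad y_0:=(a,b,z)\in T .$$
For any open $U\ni p$ we eventually have $c_n,c_n'\in U$, so $x_n$ has at least two components in $U$, i.e. $x_n\in\widetilde U$; hence $x_n\to p$ in Tukia's topology, and it only remains to bound $\rho(y_0,x_n)$ uniformly in $n$.

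Next I expand $\rho(y_0,x_n)$ as the maximum of the crossratios $(u,u'\mid v,v')$ with $\{u,u'\}\subset\{a,b,z\}$ and $\{v,v'\}\subset\{c_n,c_n',z\}$. Those in which the two pairs share the coordinate $z$ vanish. The crossratios $(a,b\mid c_n,z)$ and $(a,b\mid c_n',z)$ are bounded for large $n$ by the observation following Lemma \ref{lemma:finitedistance}, since their arguments converge to $a,b,p,z$ with $a\neq b$ and $p\neq z$. For the crossratios $(a,z\mid c_n,c_n')$ and $(b,z\mid c_n,c_n')$, fix a small open $I\ni p$ with $a,b\notin\cl I$ and with $z,c_n,c_n'\in I$ for all large $n$; three of the four arguments then lie in $I$, so Lemma \ref{lemma:boundedcrossratiobychangingpoints} (replacing the argument $z\in I$ by $a$, respectively $b$, which lie outside $\cl I$) bounds each of them by $(a,b\mid c_n,c_n')+M(I,a,b)$. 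Consequently
$$\rho(y_0,x_n)\le (a,b\mid c_n,c_n')+C$$
for all large $n$, with $C$ independent of $n$, and the whole problem is reduced to choosing the $c_n,c_n'$ so that $(a,b\mid c_n,c_n')$ stays bounded.

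It is at this point that the hypothesis enters. Let $\{a,b\}<A_1<\cdots<A_{k_0}<\{p\}$ be a chain of maximal length $k_0=(a,b\mid p)<R$. Since $A_{k_0}<\{p\}$, the set $W:=\inte A_{k_0}^+$ is an open neighbourhood of $p$ (if $k_0=0$, take $W:=M$), and one checks $a,b\notin W$; I choose the $c_n,c_n'$ inside $W$ (still avoiding $p$ and $z$), so that automatically $(a,b\mid c_n,c_n')\ge k_0$. The claim is the reverse inequality $(a,b\mid c_n,c_n')\le k_0+C'$ with $C'$ independent of $n$. Given a chain $\{a,b\}<D_1<\cdots<D_l<\{c_n,c_n'\}$, the relations between consecutive annuli give $a,b\in\inte D_1^-\subseteq\inte D_{l-1}^-$ and $\cl D_l^+\subseteq\inte D_{l-1}^+$, so $D_{l-1}$ separates $\{a,b\}$ from a neighbourhood of $\{c_n,c_n'\}$. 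If for infinitely many $n$ there were such a chain with $l$ much larger than $k_0$, then, using that $\mathcal A/G$ is finite, one could pass to a subsequence along which the annuli occupying a fixed position past $k_0$ all lie in one $G$-orbit, say they equal $f_n(\pm A)$, and along which $(f_n)$ is a convergence sequence; the sides of $f_n(\pm A)$ then converge in the Hausdorff sense, and — using that $a,b\in\inte D_1^-$ is fixed while $c_n,c_n'\to p$ — the limiting annulus $D$ satisfies $A_{k_0}<D$ and $p\in\inte D^+$. This would extend the maximal chain to $\{p\}$, a contradiction. Hence $(a,b\mid c_n,c_n')$ is bounded, which completes the proof.

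The main obstacle is precisely this last step. The underlying picture is clear --- the ``$\rho$-geodesic'' from $\{a,b\}$ to $\{c_n,c_n'\}$ passes within $k_0\le R$ of $p$ while $c_n,c_n'$ are within $o(1)$ of $p$, so $(a,b\mid c_n,c_n')$ ought to be $\le R+o(1)$ --- but $p$ is not a point of $T$ and the annuli realizing $(a,b\mid c_n,c_n')$ vary with $n$, so converting the picture into the maximality contradiction requires choosing the neighbourhoods that trap $c_n,c_n'$ in a way adapted to the finitely many $G$-orbits of annuli and carefully controlling the Hausdorff limits of the sides $D_j^{\pm}$ along a convergence sequence in $G$; in particular one must ensure that the limit annulus genuinely has $p$ in the \emph{interior} of its $+$-side, and that the convergence sequence does not have its repelling point among the points involved.
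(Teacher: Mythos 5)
There is a genuine gap, and it sits exactly where you flag ``the main obstacle'': the claimed bound $(a,b\mid c_n,c_n')\le k_0+C'$ is not established by your sketch, and in fact it is \emph{false} for an arbitrary choice of $c_n,c_n'\in W$ tending to $p$. The restriction $c_n,c_n'\in W=\inte A_{k_0}^+$ does not prevent long \emph{nested} chains whose small sides shrink onto points near, but different from, $p$: for instance, if $q_n\to p$ are attracting fixed points of conjugates $h_ngh_n^{-1}$ of the element generating the annulus system, then the nested annuli $h_ng^kh_n^{-1}(A)$ give $(a,b\mid c_n,c_n')\ge n$ once $c_n,c_n'$ are chosen close enough to $q_n$, while still $c_n,c_n'\to p$ and $(a,b\mid p)<R$. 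So some argument must genuinely use how $c_n,c_n'$ are chosen, and the one you sketch does not close: along a convergence subsequence $(f_n)$ the sets $f_nA^{\pm}$ collapse in the Hausdorff sense to the attracting point (unless the repelling point lies on that side), so the ``limiting annulus'' $D$ is degenerate and you cannot extract $p\in\inte D^+$ — at best you get $p$ in a closed limit set with empty interior, which is not enough to prolong the maximal chain to $\{p\}$ and reach a contradiction.

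The difficulty evaporates if you let $p$ itself be a coordinate of your test triples, which is what the paper does: take $x=(a,b,p)$ and $x_n=(a,t_n,p)$ with $t_n\to p$ (these lie in $T$ and converge to $p$ in Tukia's topology since two components lie in any neighbourhood of $p$). Then $\rho(x,x_n)=\max\{(a,b\mid t_n,p),\,(b,p\mid a,t_n)\}$; the first term is at most $(a,b\mid p)<R$ because any chain ending at $\{t_n,p\}$ has $p\in\inte A_m^+$ and is therefore already a chain ending at $\{p\}$ — this is where the hypothesis is used, with no compactness argument at all — and the second term is bounded by Lemma \ref{lemma:finitedistance} with $\{b\},\{p\}$ on one side and $\{a\}$, a small neighbourhood of $p$ on the other. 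Your first two reductions (the expansion of $\rho(y_0,x_n)$ and the applications of Lemmas \ref{lemma:finitedistance} and \ref{lemma:boundedcrossratiobychangingpoints}) are fine, but they lead to a quantity you cannot control; I would rewrite the proof around triples containing $p$.
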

\begin{proof}
	Fix $x=(a,b,p)\in T$.
	We need to find $(x_n)\subset T$ such that $x_n\to p$ and $\rho(x,x_n)\le K$. 
	We take as candidate $x_n=(a,t_n,p)$ where $t_n\to p$.
	By definition, the value of $\rho(x,x_n)$ is the maximum between $(a,b|t_n,p)$ and $(b,p|a,t_n)$. By hypothesis, $(a,b|t_n,p)$ is bounded, as any chain between $\{a,b\}$ and $\{t_n,p\}$ is also a chain between $\{a,b\}$ and $\{p\}$.
	Hence the only possibility for $p$ to be in the infinite boundary is $(b,p|a,t_n)\to \infty$, which can not happen by Lemma \ref{lemma:finitedistance} applied with $B_1=\{b\},C_1=\{p\}, C_2=\{a\}$ and $B_2$ an open set around $p$ separated from $b$.
	
\end{proof}

\begin{proposition}\label{prop:conicpointsareinfinitebdpoints}
	Let $p\in \nsp$. Then, there exists a sequence $(g_k)\subset G$ such that $g_k p\to c\in A^\pm$, and $g_k x\to b\in A^\mp$ for all $x\neq p$. In particular, $p$ is a conical limit point.
\end{proposition}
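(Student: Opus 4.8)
The plan is to turn the defining property of $\nsp$ into an abundance of long crossratio chains, and then extract a convergence sequence from the group elements producing those chains. By the contrapositive of Lemma~\ref{lemma:noboundp}, since $p\in\nsp=\fsp^{c}$, for any two distinct points $x,y\in M\setminus\{p\}$ (such a pair exists because the action is non-elementary, so $|M|\ge 3$) and any $R>0$ we have $(x,y|p)\ge R$; that is, $(x,y|p)=\infty$. Hence for each $n$ there is a chain
\[
\{x,y\}<B_1<B_2<\cdots<B_n<\{p\}
\]
of annuli of $\mathcal{A}$. Since $<$ is transitive on annuli, and along the chain the negative parts $B_i^-$ increase while the positive parts $B_i^+$ decrease, each $B_i$ satisfies $\{x,y\}\subset\inte B_i^-$ and $p\in\inte B_i^+$. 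Because $\mathcal{A}=\{g(\pm A):g\in G\}$, for each $i$ I can pick $g_i\in G$ with $B_i\in\{g_iA,g_i(-A)\}$, and then either $g_i^{-1}\{x,y\}\subset\inte A^-$ with $g_i^{-1}p\in\inte A^+$, or the same with $A^-$ and $A^+$ interchanged.

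The next step is to see that $g_1,\dots,g_n$ are pairwise distinct. First, the $B_i$ themselves are distinct: if $B_i=B_j$ for $i\ne j$, transitivity would give $B_i<B_i$, i.e.\ $\inte B_i^-\cup\inte B_i^+=M$, hence (applying $g_i^{-1}$) $\inte A^-\cup\inte A^+=M$, contradicting $M\setminus(A^-\cup A^+)\ne\emptyset$. Second, if $g_i=g_j$ with $i\ne j$, then since $B_i\ne B_j$ we must have $\{B_i,B_j\}=\{g_iA,g_i(-A)\}$, and unwinding $B_i<B_j$ (or $B_j<B_i$) forces $g_iA^-=M$ or $g_iA^+=M$, impossible because $A^-$ and $A^+$ are nonempty and disjoint. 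Therefore the set
\[
S=\{g\in G:\ g^{-1}p\in\inte A^+,\ g^{-1}x,\ g^{-1}y\in\inte A^-\}\ \cup\ \{g\in G:\ g^{-1}p\in\inte A^-,\ g^{-1}x,\ g^{-1}y\in\inte A^+\}
\]
contains the $n$ distinct elements $g_1,\dots,g_n$ for every $n$, so $S$ is infinite, and therefore one of its two pieces is infinite. Assume it is the first (the second gives the alternative assignment of signs in the statement).

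Now I would run a convergence-group argument. Choose distinct elements $h_k$ in that infinite piece and put $g_k:=h_k^{-1}$, so that $g_kp\in A^+$ and $g_kx,g_ky\in A^-$ for all $k$. After passing to a subsequence, the convergence property gives points $\alpha,\beta\in M$ with $g_k|_{M\setminus\alpha}\to\beta$ locally uniformly. If $\alpha\ne p$, then $g_kp\to\beta$, so $\beta\in A^+$ (a limit of points of the closed set $A^+$); moreover, as $x\ne y$, at least one of $x,y$ differs from $\alpha$, so $g_k$ applied to it tends to $\beta$ and therefore $\beta\in A^-$ — impossible since $A^-\cap A^+=\emptyset$. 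Hence $\alpha=p$, so $g_kz\to\beta$ for every $z\ne p$, and $\beta\in A^-$. Passing to a further subsequence, $g_kp\to c$ for some $c\in A^+$ by compactness of $A^+$. Then $c\ne\beta$, $c\in A^+=A^{\pm}$ and $\beta\in A^-=A^{\mp}$, and the sequence $(g_k)$ exhibits $p$ as a conical limit point with the stated containments.

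The step I expect to be the main obstacle is the combinatorial bookkeeping of the first two paragraphs: extracting from the arbitrarily long crossratio chains a genuinely infinite family of \emph{distinct} group elements, and fixing once and for all which component of $A$ receives $p$, so that the dynamical argument in the last paragraph applies without the signs oscillating along the sequence. The convergence-group step itself is short once the infinite set $S$ and the uniform membership relations $g_kp\in A^+$, $g_kx\in A^-$ are in hand.
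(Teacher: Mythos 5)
Your proof is correct and follows essentially the same route as the paper: both turn the unboundedness of $(x,y|p)$ into arbitrarily long chains of translates of $\pm A$, extract infinitely many distinct group elements sending $\{x,y\}$ and $p$ to opposite sides of the annulus, and then apply the convergence property to force $p$ to be the repelling point. The only (harmless) differences are bookkeeping: you fix the sign by splitting one infinite set into two pieces rather than by a pigeonhole on each chain, you spell out the distinctness of the $g_i$ which the paper leaves implicit, and you apply the convergence property directly to the inverses $g_k=h_k^{-1}$ instead of to the $h_k$ followed by the attracting/repelling swap under inversion.
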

\begin{proof}
	Since $p\in \nsp$, the quantity $(a,b|p)$ is unbounded for any $a,b\in M\setminus p$.
	Hence, we can build arbitrary long chains of the form
	$$\{a,b\}<A_1<\ldots <A_{2R}<\{p\}.$$
	Then we can take subchains such that
	$$\{a,b\}<g_1\sigma(R)A<\ldots <g_R\sigma(R)A<\{p\},$$
	where $\sigma(R) =\pm 1$.
	That is, we can take a subchain such that all annulus of the chain are either translates of $A$ or of $-A$.
	Assume that as $R\to \infty$, $\sigma(R)=1$ infinitely many times. 
	Then, we have infinitely many $h\in G$ such that $\{a,b\}\subset hA^-$ and $p\in hA^+$. 
	Taking a convergent subsequence $(h_i)$, by the reasoning of Lemma \ref{lemma:finiteamount}, the repelling point is in $A^-$.
	Also, $p\in h_iA^+$, so $h_i^{-1}p\in A^+$. 
	Since $M$ is compact we can take a subsequence such that $h_{i_k}^{-1}p$ converges and, since $A^+$ is closed, it converges to a point in $A^+$. 
	Then, the sequence $(g_k)=(h_{i_k}^{-1})$ has its attracting point in $A^-$, and $g_k p$ converges to a point in $A^+$, so the proposition is satisfied.
	If instead we have $\sigma(R)=-1$ infinitely many times, following the same reasoning we get $g_kp\to c\in A^-$ and $g_kx\to b\in A^+$ for all $x\neq p$.
\end{proof}
So, in particular, parabolic fixed points are finite boundary points and $\fsp$ is not empty in some cases. 

\subsubsection{Inverse, from $\nsp$ to $\phi^{-1}(\nsp)$}
We have defined a continuous $G$-invariant map $\phi$ from $\partial T$ to $M$. 
This has been built by observing that, given $\lambda \in \partial T$, any sequence converging to $\lambda$ in Gromov's topology converges to a fixed point $\phi(\lambda)\in M$ in Tukia's topology. 
As we have seen, the same reasoning can not be used to build the inverse, as some sequences converging to finite boundary points may be bounded and hence not converge to $\partial T$ in Gromov's topology.
To get around this we simply forget about the problematic points and build the inverse from $\nsp$ to $\phi^{-1}(\nsp)$. 
Later we will see that, under the stationary probability measure, the mass of the set we are leaving out is $0$ so we have an equivalence between $G$-measure spaces.

Given $p\in \nsp$, we will associate a really particular kind of sequence $(x_n)\subset T$ which converges to $p$ in Tukia's topology and prove that, in the Gromov topology, $(x_n)$ converges to some $\lambda\in \partial T$ only depending on $p$.
The associated sequence we choose is based on a construction found in \cite{Tukia}. 
Given a bi-infinite geodesic on the Kleinian space between two boundary points $a,b \in \S^n$ one considers a subset of the preimage to $T$, $L(a,b)=\{(a,b,t),t\in M\setminus \{a,b\}\}$, denoted \emph{line} (between $a$ and $b$). 
On the Kleinian example we have that a line projects to a whole bi-infinite geodesic.
For a general convergence group we expect that lines have some similarities with quasigeodesics in $(T,\rho)$.
In particular, if $a$ and $b$ are infinite boundary points, the Gromov boundary of $L(a,b)\subset T$, $\partial L(a,b)\subset \partial T$, consists of two points.
Furthermore, if we fix $a$ and move $b$, one of the two Gromov boundary points does not change, effectively providing a well-defined point in the Gromov boundary associated to $a$.

With this in mind, given $p\in \nsp$, we consider any $\alpha \neq p$ and a sequence $t_n\to p$. 
Since $p$ is not a finite boundary point, the sequence $(\alpha,t_n,p)\subset T$ goes to infinity, as $(\alpha,t_n,p)\to p$. 
If our reasoning from before is correct, this defines a Gromov sequence (or at least contain a Gromov subsequence) converging to some point in $\partial T$, which we denote $\psi(p)$. 
All the claims we made in the previous paragraph can be deduced from the following lemma.
\begin{lemma}\label{lemma:welldefined1}
	The application $\psi:M_\infty \to \partial T$ is well-defined.
\end{lemma}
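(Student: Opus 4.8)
The plan is to establish a single uniform statement that already contains well-definedness: for every $p\in\nsp$, every pair of choices $(\alpha,(t_n))$ and $(\alpha',(t_n'))$ with $\alpha,\alpha'\neq p$ and $t_n,t_n'\to p$ in $M$, and every basepoint $o=(a_1,a_2,a_3)\in T$ whose components are distinct from $\alpha,\alpha',p$, one has $(x_n\cdot x_m')_o\to\infty$ as $\min\{n,m\}\to\infty$, where $x_n=(\alpha,t_n,p)$ and $x_m'=(\alpha',t_m',p)$. Granting this, taking $\alpha'=\alpha$ and $t_n'=t_n$ shows that $(x_n)$ is a Gromov sequence, hence converges to some $\lambda\in\partial T$; and for an arbitrary second choice it shows $(x_n)\sim(x_m')$, so $(x_m')$ converges to the same $\lambda$. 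Thus $\psi(p):=\lambda$ is independent of $\alpha$ and of $(t_n)$, which is exactly the claim.

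First I would record that $\rho(o,x_n)\to\infty$ and $\rho(o,x_m')\to\infty$: since $t_n\to p$, the triples $x_n$ converge to $p$ in Tukia's topology, so a $\rho$-bounded subsequence of $(x_n)$ would exhibit $p$ as a finite boundary point, contradicting $p\in\nsp$. The heart of the argument is then the estimate
\[
\rho(x_n,x_m')\le\max\{\rho(o,x_n),\rho(o,x_m')\}+C
\]
for a uniform constant $C$ and all large $n,m$; combined with the previous sentence this gives $2(x_n\cdot x_m')_o=\rho(o,x_n)+\rho(o,x_m')-\rho(x_n,x_m')\ge\min\{\rho(o,x_n),\rho(o,x_m')\}-C\to\infty$.

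To prove the estimate I would expand $\rho$ as a maximum of crossratios. Because $x_n$ and $x_m'$ both have $p$ as a component, every crossratio between two pairs that each contain $p$ vanishes, so $\rho(x_n,x_m')$ is the maximum of the five crossratios $(\alpha,t_n\mid t_m',p)$, $(t_n,p\mid\alpha',t_m')$, $(\alpha,t_n\mid\alpha',p)$, $(\alpha,p\mid\alpha',t_m')$, $(\alpha,t_n\mid\alpha',t_m')$. Once $n,m$ are large enough that $t_n,t_m'$ lie in a small neighbourhood of $p$ disjoint from $\{\alpha,\alpha'\}$, the last three have a pair confined near $\alpha$ or $\alpha'$ and are bounded by an absolute constant via Lemma \ref{lemma:finitedistance}. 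The delicate terms are the first two, in which every point tends to $p$; for $(\alpha,t_n\mid t_m',p)$ I would apply Lemma \ref{lemma:boundedcrossratiobychangingpoints} twice --- first with a small neighbourhood $I$ of $p$ (avoiding $\alpha,a_1$) to replace the moving point $t_n$ by the fixed $a_1$, then with a neighbourhood of $\{p,\alpha\}$ (avoiding $a_1,a_2$) to replace $\alpha$ by $a_2$ --- arriving at $(\alpha,t_n\mid t_m',p)\le(a_1,a_2\mid t_m',p)+C'\le\rho(o,x_m')+C'$, since $(a_1,a_2\mid t_m',p)$ is one of the crossratios in the maximum defining $\rho(o,x_m')$. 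Symmetrically $(t_n,p\mid\alpha',t_m')\le\rho(o,x_n)+C''$, and the displayed estimate follows.

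The main obstacle is precisely these ``everything tends to $p$'' crossratios: a careless bound absorbs such a term into the \emph{sum} $\rho(o,x_n)+\rho(o,x_m')$, which is worthless for the Gromov product, whereas the two-step point replacement above shows it is absorbed into a \emph{single} one of the two base distances, producing the crucial $\max$. Once the inequality $\rho(x_n,x_m')\le\max\{\rho(o,x_n),\rho(o,x_m')\}+C$ is in hand, the limit $\psi(p)$ exists and is independent of all choices as explained above, so $\psi$ is well-defined.
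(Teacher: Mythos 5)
Your proof is correct and follows essentially the same route as the paper: the decisive step in both is the inequality $\rho(x_n,x_m')\le\max\{\rho(o,x_n),\rho(o,x_m')\}+C$, obtained by discarding the vanishing and uniformly bounded crossratios and absorbing the two ``everything tends to $p$'' terms into a single base distance via Lemma \ref{lemma:boundedcrossratiobychangingpoints}, after which the Gromov product is bounded below by $\min\{\rho(o,x_n),\rho(o,x_m')\}-C$. The only organisational difference is that the paper works from the adapted basepoint $(\alpha,t_0,p)$, so a single point replacement suffices, and then disposes of the independence of $\alpha$ separately by noting via Lemma \ref{lemma:finitedistance} that changing $\alpha$ displaces the whole tail by a bounded $\rho$-distance, whereas you fold all choices into one uniform estimate at the cost of a second application of the replacement lemma.
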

\begin{proof}
	Fix $x=(\alpha,t_0,p)$ and choose $(t_n)\subset M$ with $t_n\to p$.
	and denote $x_n=(\alpha,t_n,p)$.
	Since $p\in M_\infty$ and $x_n\to p$ we have $\rho(x,x_n)\to \infty$.
	By definition we have $\rho(x,x_n)=\max((p,t_0|t_n,\alpha),(\alpha,t_0|t_n,p))$.
	Since $t_n\to p\neq \alpha$ we can apply Lemma \ref{lemma:finitedistance} to get $(p,t_0|t_n,\alpha)<K$.
	Hence,
	\[
	\rho(x,x_n)=(\alpha,t_0|t_n,p)
	\]
	for $n$ large enough. 
	Then, the Gromov product of the sequence $(x_n)$ results in, for $n,m$ large enough,
	\[
	2(x_n\cdot x_m)_{x}=(\alpha,t_0|t_n,p)+(\alpha,t_0|t_m,p)-\max((\alpha,t_n|t_m,p),(p,t_n|t_m,\alpha)).
	\]
	Take a neighborhood $I$ around $p$, separated from $\alpha$ and $t_0$. There is some $n_0$ such that for $n,m\ge n_0$ we have $t_n,t_m\in I$, so applying Lemma \ref{lemma:boundedcrossratiobychangingpoints} we have some $K<\infty$ such that, for any $n,m\ge n_0$,
	\[(\alpha,t_n|t_m,p)\le (\alpha, t_0|t_m,p) + K \text{ and } (\alpha,t_m|t_n,p)\le (\alpha, t_0|t_n,p) + K.
	\]
	Therefore, $\rho (x_n,x_m)\le \max ( (\alpha, t_0|t_n,p), (\alpha, t_0|t_m,p)) + K$, and 
	$$2(x_n\cdot x_m)_{x}\ge \min(\rho(x,x_n),\rho(x,x_m))-K,$$
	which goes to infinity as $n,m$ go to infinity. So, $(x_n)$ converges to some $\lambda\in \partial T$ in Gromov's topology, which we define as $\psi(p)$.
	The same reasoning can be applied to see that any other sequence with $t_m'\to p$ satisfies $(x_n\cdot (\alpha,t_m',p))_{x_0}\to \infty$, so the corresponding sequence converges to the same $\lambda$.
	
	The only remaining thing to check to see that $\psi$ is well-defined is that $\lambda$ does not depend on $\alpha$. 
	For this, we see that a different $\alpha$ displaces the tail of the sequence by a bounded distance.
	That is, we see that for $n$ big enough,
	$$\rho((\alpha,t_n,p),(\beta,t_n,p))=\max((\alpha,t_n|\beta,p),(\alpha,p|\beta,t_n))<C.$$
	This follows easily from applying Lemma \ref{lemma:finitedistance}. 
\end{proof}

The next thing we want to see is that $\psi$ is actually the inverse of the previous application $\phi$. 
That is, all Gromov sequences converging to $p\in \nsp$ are actually similar and hence related to the same $\lambda\in \partial T$.
\begin{lemma}\label{lemma:phihasinverse}
	The restriction of the application $\phi:\partial T \to M$ to $\phi:\phi^{-1}(M_\infty)\to M_\infty$ has an inverse, given by the function $\psi$ described above.
\end{lemma}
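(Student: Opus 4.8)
The plan is to show that $\psi$ and $\phi$ are mutually inverse on the relevant sets by tracking, for a point $p\in\nsp$, a single canonical approximating sequence and seeing that both compositions act as the identity. First I would verify $\phi\circ\psi=\mathrm{id}_{\nsp}$: given $p\in\nsp$, pick $\alpha\neq p$ and $t_n\to p$, and set $x_n=(\alpha,t_n,p)$. By Lemma \ref{lemma:welldefined1}, $(x_n)$ is a Gromov sequence converging to $\psi(p)\in\partial T$, so $(x_n)$ is a representing sequence for $\psi(p)$ in the sense used to define $\phi$. On the other hand, two of the three coordinates of $x_n$, namely $t_n$ and $p$, converge to $p$ in $M$; hence by the very definition of $\phi$ (Lemma \ref{lemma:welldefined}), $\phi(\psi(p))=p$. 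This also shows $\psi(\nsp)\subset\phi^{-1}(\nsp)$, so the statement that $\psi$ maps into $\phi^{-1}(\nsp)$ is automatic.

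Next I would verify $\psi\circ\phi=\mathrm{id}_{\phi^{-1}(\nsp)}$. Take $\lambda\in\phi^{-1}(\nsp)$ and set $p:=\phi(\lambda)\in\nsp$. Choose any representing sequence $(a_n)\subset T$ with $a_n\to\lambda$ in the Gromov topology; passing to a subsequence we may assume each coordinate converges in $M$, and by the discussion preceding Lemma \ref{lemma:welldefined} at least two coordinates converge to $p=\phi(\lambda)$. Let $(x_n)=((\alpha,t_n,p))$ be the canonical sequence defining $\psi(p)$, with $\alpha\neq p$ and $t_n\to p$. It suffices to prove $(a_n\cdot x_n)_x\to\infty$ for some basepoint $x\in T$, since then $(a_n)$ and $(x_n)$ are equivalent Gromov sequences and therefore $\lambda=\psi(p)$. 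This is the estimate I expect to be the main obstacle: it is essentially a mirror image of the computation in the last part of the proof of Lemma \ref{lemma:welldefined}, but now run so as to produce a lower bound that diverges rather than a uniform upper bound. Concretely, choosing $x=(\alpha,t_0,p)$ and using $\rho(x,x_n)=(\alpha,t_0|t_n,p)$ for large $n$ (as established in Lemma \ref{lemma:welldefined1}), one writes out $2(a_n\cdot x_n)_x=\rho(a_n,x)+\rho(x,x_n)-\rho(a_n,x_n)$; the point is that $\rho(a_n,x_n)$ can be bounded above by (a constant plus) $\rho(x,x_n)$ together with whatever large crossratio term witnesses $\rho(a_n,x)\to\infty$, because the two coordinates of $a_n$ converging to $p$ lie eventually in a neighborhood $I$ of $p$ separated from $\alpha,t_0$, so Lemmas \ref{lemma:finitedistance}, \ref{lemma:boundedcrossratiobychangingpoints} and \ref{lemma:crossratiogromov} let one split and recombine chains with only bounded loss. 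After this bookkeeping the two large terms cancel the $\rho(a_n,x_n)$ term up to a constant, leaving $(a_n\cdot x_n)_x\to\infty$.

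Finally, the three facts—$\phi$ is a well-defined continuous $G$-equivariant map (Lemmas \ref{lemma:welldefined}, \ref{lemma:equivandcont}), $\psi$ is well-defined on $\nsp$ (Lemma \ref{lemma:welldefined1}), and the two compositions are identities as above—together give that $\phi\colon\phi^{-1}(\nsp)\to\nsp$ is a bijection with inverse $\psi$, which is exactly the assertion of Lemma \ref{lemma:phihasinverse}. I would note in passing that $G$-equivariance of $\psi$ then follows from that of $\phi$, though it is not strictly needed for the statement here. The only genuinely delicate point is the divergence estimate in the previous paragraph; everything else is a matter of unwinding definitions already set up in the section.
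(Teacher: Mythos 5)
Your overall skeleton matches the paper's: the direction $\phi\circ\psi=\mathrm{id}$ is immediate from the definitions, and the content lies in showing that an arbitrary representing sequence $(a_n)$ of $\lambda\in\phi^{-1}(\nsp)$ is Gromov-equivalent to the canonical sequence $(\alpha,t_n,p)$. But the step you yourself flag as ``the main obstacle'' is precisely where your sketch does not go through as described, and it is the entire content of the lemma. First, a literal reading of ``the two large terms cancel the $\rho(a_n,x_n)$ term up to a constant'' gives $2(a_n\cdot x_n)_x\ge -C$, which is useless; what you need is an estimate of the form $\rho(a_n,x_n)\le\max\bigl(\rho(x,a_n),\rho(x,x_n)\bigr)+C$, so that the Gromov product is at least $\tfrac12\min(\cdots)-C\to\infty$. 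Second, and more seriously, the tools you invoke cannot produce that estimate for a \emph{direct} comparison of $a_n=(u_n,v_n,w_n)$ with $(\alpha,t_n,p)$: the distance between these two triples is a maximum over nine crossratios, and among them is $(v_n,w_n|t_n,p)$, in which all four entries converge to $p$ at a priori unrelated rates. Lemmas \ref{lemma:finitedistance}, \ref{lemma:boundedcrossratiobychangingpoints} and \ref{lemma:crossratiogromov} all require fixed separating neighbourhoods (an ``anchor'' point kept away from $p$ in each pair), so none of them bounds such a term; and the inequality $\rho(y,z)\le\max(\rho(x,y),\rho(x,z))+C$ is equivalent to the lower bound on the Gromov product you are trying to prove, so it cannot be assumed.

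The paper resolves exactly this difficulty by interpolating: it first shows $(u_n,v_n,w_n)\sim(\alpha,v_n,w_n)$, then $(\alpha,v_n,w_n)\sim(\alpha,v_n,p)$, and finally invokes Lemma \ref{lemma:welldefined1} to identify the result with $\psi(p)$. Changing one coordinate at a time ensures that every distance computed is between triples sharing two components, so only two crossratios appear at each stage and each of them contains a point ($\alpha$ or $t$) bounded away from $p$, which is what makes Lemma \ref{lemma:boundedcrossratiobychangingpoints} applicable and yields the bound by $\max(\rho(x,\cdot),\rho(x,\cdot))+C$. (The second stage also uses that $p\in\nsp$ to guarantee $\rho(x,(\alpha,v_n,p))\to\infty$, and there is a small case analysis, which you omit, when the remaining coordinate $u_n$ also converges to $p$.) To repair your proposal, either carry out this coordinate-by-coordinate interpolation and conclude by quasi-transitivity of $\sim$, or supply a genuinely new argument controlling crossratios between two pairs that both collapse to $p$; as written, the key estimate is asserted rather than proved.
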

\begin{proof}
	Since $\psi(p)=((\alpha,t_n,p))_n$ with $t_n\to p$, we have $\phi(\psi(p))=p$.
	
	We have to check $\psi(\phi(\lambda))=\lambda$, i.e., that given $\lambda\in \phi^{-1}(p)$ where $p$ is not a finite boundary point and a sequence $(a_n)=((x_n,y_n,z_n))_n \sim \lambda$, we have $((x_n,y_n,z_n))_n\sim((\alpha,t_n,p))_n$.
	
	The first step will be to show that $((x_n,y_n,z_n))_n\sim((\alpha,y_n,z_n))_n$.
	We begin by fixing $x=(\alpha,t,p)$ and taking a subsequence of $(a_n)$ such that each of the three elements converges to some point. 
	By Lemma \ref{lemma:welldefined1} at least two of these have to converge to $p$, which we assume are the last two components. 
	By Lemma \ref{lemma:finitedistance}, if the first component converges to $\beta\neq p$, the tails of the sequences $(x_n,y_n,z_n)$ and $(\alpha,y_n,z_n)$ are separated by a bounded distance, so we have the similarity between the sequences are similar. 
	If $x_n\to p$, shuffling the components if necessary, we may assume that the distance from $x$ is always achieved with the last two components (i.e., $\rho(x,a_n)=\max\left\{(v,w|y_n,z_n):v,w\in \{\alpha,t,p\}\right\}$). 
	Evaluating the Gromov product, we get
	$$2(a_n\cdot (\alpha,y_n,z_n))_{x}=\rho(x,a_n)+\rho(x,(\alpha,y_n,z_n))-\rho((x_n,y_n,z_n),(\alpha,y_n,z_n)).$$
	The last term is equal to $\max((x_n,y_n|\alpha,z_n),(x_n,z_n|\alpha,y_n))$, so, applying Lemma \ref{lemma:boundedcrossratiobychangingpoints} we have some $K<\infty $ such that
	\[
	(x_n,y_n|\alpha,z_n)\le (x_n,y_n|\alpha,t) +K\text{ and } (x_n,z_n|\alpha,y_n)\le (x_n,z_n|\alpha,t) +K,
	\]
	which are both smaller than $\rho(x,a_n) + K$. Hence,
	$$2(a_n\cdot(\alpha,y_n,z_n))_{x}\ge \rho(x,(\alpha,y_n,z_n))-K.$$
	Since we assumed that the distance between $x$ and $a_n$ is achieved with the last two components of $a_n$ we have $\rho(x,(\alpha,y_n,z_n))\ge \rho(x,a_n)$, so the Gromov product goes to infinity and both sequences are similar.
	
	The next step is checking that $((\alpha,y_n,z_n))_n\sim((\alpha,y_n,p))_n$. The Gromov product is
	$$2((\alpha,y_n,z_n)\cdot(\alpha,y_n,p))_{x}=\rho(x,(\alpha,y_n,z_n))+\rho(x,(\alpha,y_n,p))-\rho((\alpha,y_n,z_n),(\alpha,y_n,p)).$$
	By the definition of the distance, the first term satisfies
	\begin{equation*}
	\rho(x,(\alpha,y_n,z_n))\ge (y_n,z_n|\alpha,p).
	\end{equation*}
	The second term is $\rho((\alpha,t,p),(\alpha,y_n,p))=\max((\alpha,t|y_n,p),(t,p|\alpha,y_n))$. 
	By Lemma \ref{lemma:finitedistance} we have $(t,p|\alpha,y_n)<K$. 
	Since $p$ is in the infinite boundary, $(\alpha,t|y_n,p)\to \infty$ as $y_n\to p$.
	By Lemma \ref{lemma:boundedcrossratiobychangingpoints}, for $n$ big enough 
	\begin{equation*}
	\rho((\alpha,t,p),(\alpha,y_n,p))=(\alpha,t|y_n,p)\ge (\alpha,z_n|y_n,p)-C.
	\end{equation*}
	Using these last two inequalities we get
	\begin{align*}
	\rho((\alpha,y_n,z_n),(\alpha,y_n,p))&=\max((\alpha,z_n|y_n,p),(y_n,z_n|\alpha,p))\\
	&\le \max (\rho(x,(\alpha,y_n,p)),\rho(x,(\alpha,y_n,z_n)))+C.
	\end{align*}
	Then, we have
	$$2((\alpha,y_n,z_n)\cdot (\alpha,y_n,p))_{x}\ge \min (\rho(x,(\alpha,y_n,p)),\rho(x,(\alpha,y_n,z_n)))-C,$$
	which goes to infinity since the $p$ is an infinite boundary point (and the first possible value goes to infinity) and we had chosen $y_n$ and $z_n$ such that $\rho(x,(\alpha,y_n,z_n))\ge \rho(x,a_n)\to \infty$.
	
	Finally, by the proof of Lemma \ref{lemma:welldefined1}, for $p$ in the infinite boundary all sequences of the form $(\alpha,t_n,p)$ with $t_n\to p$ are equivalent so we have $(a_n)\sim ((\alpha,t_n,p))$ and hence $\psi(\phi(\lambda))=\lambda$.
\end{proof}

The only thing remaining to get a homeomorphism is the continuity of $\psi$. 
\begin{lemma}\label{lemma:psicontinuous}
	The application $\psi:\nsp\to \partial T$ is continuous.
\end{lemma}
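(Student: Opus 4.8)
To prove that $\psi:\nsp\to\partial T$ is continuous, I would take a sequence $p_k\to p$ in $\nsp$ (with $p\in\nsp$) and show $\psi(p_k)\to\psi(p)$ in the Gromov topology on $\partial T$, i.e.\ that $(\psi(p_k)\cdot\psi(p))_x\to\infty$ for some (hence any) basepoint $x\in T$. The natural strategy mirrors the proof of Lemma~\ref{lemma:equivandcont}: work entirely with the explicit sequences realizing $\psi$. Fix $\alpha\in M$ distinct from $p$ and from all but finitely many $p_k$ (possible since $p_k\to p\ne\alpha$, so discard finitely many terms), fix $t\in M$ distinct from $p$ and $\alpha$, and set $x=(\alpha,t,p)$. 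By Lemma~\ref{lemma:welldefined1} we may represent $\psi(p)$ by $(\alpha,s_m,p)_m$ with $s_m\to p$, and for each $k$ represent $\psi(p_k)$ by $(\alpha,u^k_m,p_k)_m$ with $u^k_m\to p_k$ as $m\to\infty$.

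\textbf{Key steps.} First, by the proof of Lemma~\ref{lemma:welldefined1}, for each fixed $k$ and for $m$ large, $\rho(x,(\alpha,u^k_m,p_k))=(\alpha,t\mid u^k_m,p_k)\to\infty$ as $m\to\infty$, and likewise $\rho(x,(\alpha,s_m,p))=(\alpha,t\mid s_m,p)\to\infty$. Next, to estimate the Gromov product $(\psi(p_k)\cdot\psi(p))_x=\sup\liminf_{m,m'}\big((\alpha,u^k_m,p_k)\cdot(\alpha,s_{m'},p)\big)_x$, I expand
\[
2\big((\alpha,u^k_m,p_k)\cdot(\alpha,s_{m'},p)\big)_x=\rho(x,(\alpha,u^k_m,p_k))+\rho(x,(\alpha,s_{m'},p))-\rho((\alpha,u^k_m,p_k),(\alpha,s_{m'},p)),
\]
so the task reduces to bounding the last term from above by roughly $\max$ of the first two, minus something large. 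Here is where $p_k\to p$ enters: choose a neighbourhood $I$ of $p$ separated from $\alpha$ and $t$; then for $k$ large, $p_k\in I$, and for $m,m'$ large, $u^k_m,s_{m'}\in I$ as well. Then $\rho((\alpha,u^k_m,p_k),(\alpha,s_{m'},p))$ is a max of crossratios of the form $(a,b\mid c,d)$ with $\{a,b\},\{c,d\}$ each a two-element subset of $\{\alpha,u^k_m,p_k,s_{m'},p\}$; using Lemma~\ref{lemma:finitedistance} to kill the terms where both sides stay away from $p$, and Lemma~\ref{lemma:boundedcrossratiobychangingpoints} to replace $u^k_m$ or $p_k$ by $t$ at bounded cost (legal since $u^k_m,p_k\in I$ and $t\notin\cl I$, with constant $M(I,\alpha,t)$ independent of $k$), every surviving term is bounded by $\rho(x,(\alpha,u^k_m,p_k))+K$ or $\rho(x,(\alpha,s_{m'},p))+K$ for a constant $K$ depending only on $I$, not on $k$. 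This yields $2\big((\alpha,u^k_m,p_k)\cdot(\alpha,s_{m'},p)\big)_x\ge\min\big(\rho(x,(\alpha,u^k_m,p_k)),\rho(x,(\alpha,s_{m'},p))\big)-K$. Taking $\liminf$ over $m,m'$ and using that both distances tend to $\infty$, we get $(\psi(p_k)\cdot\psi(p))_x\ge\tfrac12(C_k-K)$ where $C_k\to\infty$.

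\textbf{The catch to handle carefully.} The subtlety is that the lower bound $C_k$ on $\rho(x,(\alpha,u^k_m,p_k))$ must genuinely go to infinity \emph{as $k\to\infty$}, uniformly in the diagonal — a priori $(\alpha,t\mid u^k_m,p_k)$ could stay small for all $m$ if $p_k$ is ``close'' to a finite boundary point, even though each $p_k\in\nsp$. The clean fix is: instead of invoking finiteness at $p_k$, note that since $p\in\nsp$ we can fix a long chain $\{\alpha,t\}<A_1<\cdots<A_R<\{p\}$ for $R$ as large as we like; the interiors $\inte A_R^+$ form a neighbourhood of $p$, so for $k$ large $p_k\in\inte A_R^+$, and for $m$ large $u^k_m\in\inte A_R^+$ too, giving $(\alpha,t\mid u^k_m,p_k)\ge R$. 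Hence $C_k\ge R$ eventually for every $R$, i.e.\ $C_k\to\infty$. Combined with the previous paragraph, $(\psi(p_k)\cdot\psi(p))_x\to\infty$, which is exactly convergence $\psi(p_k)\to\psi(p)$ in $\partial T$. I expect this uniformity argument — extracting a lower bound on the Gromov product that is uniform along the approximating sequences and genuinely grows with $k$ — to be the main obstacle; once the chain-from-$\nsp$ trick supplies it, the rest is a bookkeeping repeat of the crossratio manipulations already used in Lemmas~\ref{lemma:welldefined1} and~\ref{lemma:phihasinverse}.
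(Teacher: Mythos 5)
Your overall architecture matches the paper's (represent $\psi(p_k)$ and $\psi(p)$ by explicit triples $(\alpha,\cdot,p_k)$ and $(\alpha,\cdot,p)$, get a lower bound on the Gromov product, and force that bound to grow with $k$ via a long annulus chain $\{\alpha,t\}<A_1<\cdots<A_R<\{p\}$ whose last set $\inte A_R^+$ eventually captures $p_k$ and the approximating points — that last device is exactly the paper's). But there is a genuine gap in your bookkeeping step. When you expand $\rho((\alpha,u^k_m,p_k),(\alpha,s_{m'},p))$ as a maximum of crossratios, one of the nine terms is $(u^k_m,p_k\mid s_{m'},p)$, in which \emph{all four} entries lie in the neighbourhood $I$ of $p$. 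Lemma~\ref{lemma:finitedistance} does not kill it (the two pairs have no separating neighbourhoods), and Lemma~\ref{lemma:boundedcrossratiobychangingpoints} cannot be applied to swap $u^k_m$ or $s_{m'}$ for $t$, because that lemma requires the companion point $c$ of the pair being modified to lie outside $\cl I$, whereas here the companions are $p_k$ and $p$, both in $I$. So your claim that every surviving term is at most $\max\bigl(\rho(x,(\alpha,u^k_m,p_k)),\rho(x,(\alpha,s_{m'},p))\bigr)+K$ with $K$ independent of $k,m,m'$ is unjustified, and in fact false: in the model case $M=\partial\mathbb{H}^2$, this crossratio behaves like $\rho(x,A)+\rho(x,B)-2D_k$ where $D_k\approx(p_k\cdot p)_\alpha$ is the depth to which $[\alpha,p_k]$ and $[\alpha,p]$ fellow-travel, and this exceeds the maximum once both distances pass $2D_k$. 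A quick sanity check confirms something must be wrong: if your inequality $2(A\cdot B)_x\ge\min(\rho(x,A),\rho(x,B))-K$ held for all large $m,m'$ with $K$ independent of them, then letting $m,m'\to\infty$ would give $(\psi(p_k)\cdot\psi(p))_x=\infty$, i.e.\ $\psi(p_k)=\psi(p)$ exactly, for every $k$ with $p_k\in I$ — impossible, since $\psi$ is injective (it has the inverse $\phi$). The correct conclusion is only $(\psi(p_k)\cdot\psi(p))_x\gtrsim D_k$, and the whole point of the proof is to produce a quantity playing the role of $D_k$ that tends to infinity with $k$.

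The paper avoids this trap by never comparing two triples that differ in two coordinates near $p$. It first compares $(\alpha,y,p_m)$ with $(\alpha,y,p)$ for a \emph{shared} middle point $y$: then all crossratios in the distance except $(\alpha,p_m\mid y,p)$ and $(y,p_m\mid\alpha,p)$ vanish because the pairs share a point, and each of those two has $\alpha\notin\cl I$ in the appropriate slot, so Lemma~\ref{lemma:boundedcrossratiobychangingpoints} applies. Separately, it compares $(\alpha,t^m,p)$ with $(\alpha,t^p,p)$ (shared first and third coordinates) using the estimate already established in Lemma~\ref{lemma:welldefined1}, and then chains the two bounds with the reverse triangle inequality for the Gromov product. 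To repair your argument you would need either to adopt this two-step factorization through a common coordinate, or to prove a genuinely new estimate controlling $(u^k_m,p_k\mid s_{m'},p)$ from above in terms of the chain depth $R$; as written, the cited lemmas do not supply it.
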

\begin{proof}
	Consider $(p_m)\subset \nsp$ converging to $p\in\nsp$. We want to show that $(\lambda_m)=(\psi(p_m))$ converges to $\lambda=\psi(p)$. Fix $x=(\alpha,t,p)$ and let $t_n\to p$. For each $n$, we have the maximal sequence 
	\[\{\alpha,t\}<A_{1,n}<\ldots <A_{k(n),n}<\{t_n,p\}
	\]
	associated to $(\alpha,t|t_n,p)$.
	For the last term of the sequence we have $p\in \inte A^+_{k(n),n}$, so we can take $m(n)$ such that $p_{m}$ is inside $\inte A^+_{k(n),n}$ for all $m\ge m(n)$. 
	Then, for all $y\in \inte A^+_{k(n),n}$ we have \[\{\alpha,t\}<A_{1,n}<\ldots <A_{k(n),n}<\{y,p_m\}.
	\]
	Hence, $\rho(x,(\alpha,y,p_m))$ and $\rho(x,(\alpha,y,p))$ are both greater than $(\alpha,t|t_n,p)$.
	Evaluating the Gromov product, we obtain
	$$2((\alpha,y,p_m)\cdot(\alpha,y,p))_{x}= \rho(x,(\alpha,y,p_m))+\rho(x,(\alpha,y,p))-\rho((\alpha,y,p_m),(\alpha,y,p)).$$
	By definition of the distance, the first term satisfies
	\[ (y,p_m|\alpha,p)\le \rho(x,(\alpha,y,p_m)).
	\]
	The second term is $\rho((\alpha,t,p),(\alpha,y,p))=\max((\alpha,t|y,p),(t,p|\alpha,y))$. By Lemma \ref{lemma:finitedistance} we have $(t,p|\alpha,y)<K$, so we can take $n$ big enough (modifying $p_m$ accordingly) for which we have $\rho((\alpha,t,p),(\alpha,y,p))=(\alpha,t|y,p)$, which is greater than $(\alpha,p_m|y,p)-C$, that is, such that
	\[(\alpha,p_m|y,p)\le \rho(x,(\alpha,y,p))+C. \]
	Using these last two inequalities, we get
	\begin{align*}
	\rho((\alpha,y,p_m),(\alpha,y,p))&=\max((\alpha,p_m|y,p),(y,p_m|\alpha,p))\\
	&\le \max (\rho(x,(\alpha,y,p)),\rho(x,(\alpha,y,p_m)))+C.
	\end{align*}
	Hence,
	\begin{align*}
	2((\alpha,y,p_m)\cdot(\alpha,y,p))_{x}&\ge \min (\rho(x,(\alpha,y,p_m)),\rho(x,(\alpha,y,p)))-C\\
	&\ge \rho(x,(\alpha,t_n,p)))-C.
	\end{align*}
	
	Given $L>0$, consider a neighborhood $V$ of $p$ such that $((\alpha,t_i,p)\cdot (\alpha,t_j,p))_{x}\ge L$ for all $t_i,t_j\in V$. If there is no such neighborhood we get a contradiction with the proof of Lemma \ref{lemma:welldefined1}, since we could make a sequence $(t_n)$ converging to $p$ where $(\alpha,t_n,p)$ does not converge to $\psi(p)$. Take any $t_j\in V$, let $U:=\inte A^+_{k(j),j}$ and let $m(j)$ be as in the beginning of the proof. Take $m\ge m(j)$ big enough so $p_m\in V\cap U$.	For all points $t^m,t^p\in U\cap V$ we have 
	$$((\alpha,t^m,p_m)\cdot(\alpha,t^m,p))_{x}\ge L-C$$
	and
	$$((\alpha,t^m,p)\cdot(\alpha,t^p,p))_{x}\ge L.$$
	So, by the triangle inequality for the Gromov product,
	$$((\alpha,t^m,p_m)\cdot(\alpha,t^p,p))_{x}\ge L-C-O(r).$$
	Therefore, for any $L$ there exists $m_0$ such that for $m\ge m_0$, if $t^m_n\to p_m$ and $t^p_n\to p$,
	the Gromov products between elements of the tails of the associated Gromov sequences $(\alpha, t^m_n,p_m)$ and $(\alpha,t^p_n,p)$ is bigger than $K-C-O(r)$. Hence, given $K$ there exists $m$ such that for $m\ge m_0$, $(\lambda_m\cdot \lambda)_{x}\ge K$, so $\lambda_m\to \lambda$.
\end{proof}

Joining Lemmas \ref{lemma:welldefined}, \ref{lemma:equivandcont}, \ref{lemma:welldefined1}, \ref{lemma:phihasinverse} and \ref{lemma:psicontinuous}, we have the following.
\begin{proposition}\label{prop:phihomeo}
	Let $G$ be a minimal, non-elementary convergence group on compact a metric space $M$. Let $(T,\rho)$ be the set of distinct triples equipped with the quasimetric described by Sun. Then, there exists a $G$-equivariant continuous map $\phi:\partial T\to M$ such that the restriction to $\phi^{-1}(M_\infty)$ is a homeomorphism between $\phi^{-1}(M_\infty)$ and  $M_\infty$.
\end{proposition}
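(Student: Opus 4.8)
The plan is simply to assemble the maps and properties established in the preceding lemmas; no genuinely new argument is needed beyond bookkeeping about domains, codomains, and topologies.

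First I would \emph{define} $\phi$. Given $\lambda\in\partial T$, choose any sequence $(a_n)\subset T$ with $(a_n)\sim\lambda$; passing to a subsequence we may assume each of the three coordinate sequences converges in the compact space $M$, and as observed just before Lemma \ref{lemma:welldefined}, if the limit triple lay in $T$ then $(a_n)$ would be bounded in $(T,\rho)$ by Lemma \ref{lemma:finitedistance}, contradicting $(a_n)\sim\lambda$; hence at least two coordinates share a common limit $p\in M$, and I set $\phi(\lambda):=p$. Lemma \ref{lemma:welldefined} guarantees that $p$ is independent of the representing sequence and of the chosen subsequence, so $\phi\colon\partial T\to M$ is a well-defined function, and Lemma \ref{lemma:equivandcont} provides that it is $G$-equivariant and continuous.

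Next I would invoke $\psi$. For $p\in\nsp$, Lemma \ref{lemma:welldefined1} produces a well-defined point $\psi(p)\in\partial T$, namely the Gromov limit of any sequence $(\alpha,t_n,p)$ with $\alpha\neq p$ and $t_n\to p$. Since such a sequence also converges to $p$ in Tukia's topology, the definition of $\phi$ gives $\phi(\psi(p))=p$; in particular $\psi$ takes values in $\phi^{-1}(\nsp)$, and $\phi\circ\psi=\mathrm{id}_{\nsp}$. Lemma \ref{lemma:phihasinverse} supplies the reverse identity: for $\lambda\in\phi^{-1}(\nsp)$, it shows that any sequence representing $\lambda$ is Gromov-equivalent to one of the special sequences $(\alpha,t_n,p)$ defining $\psi(\phi(\lambda))$, whence $\psi(\phi(\lambda))=\lambda$. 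Thus $\phi$ restricts to a bijection $\phi^{-1}(\nsp)\to\nsp$ with inverse $\psi$.

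Finally, endow $\phi^{-1}(\nsp)$ and $\nsp$ with the subspace topologies inherited from $\partial T$ and $M$. The restriction of the continuous map $\phi$ is continuous, and $\psi\colon\nsp\to\partial T$ is continuous by Lemma \ref{lemma:psicontinuous}, with image in $\phi^{-1}(\nsp)$, so $\psi\colon\nsp\to\phi^{-1}(\nsp)$ is continuous for the subspace topology as well. A continuous bijection with continuous inverse is a homeomorphism, which is exactly the assertion. The only point demanding a little care — and the closest thing here to an obstacle — is verifying that the two maps are defined on and into the correct sets (that $\psi$ really lands in $\phi^{-1}(\nsp)$, and that $\phi$ is genuinely bijective there rather than merely injective or merely surjective); both facts are immediate from the identities $\phi\circ\psi=\mathrm{id}$ and $\psi\circ\phi=\mathrm{id}$ recorded above, so the proof is essentially a citation of Lemmas \ref{lemma:welldefined}, \ref{lemma:equivandcont}, \ref{lemma:welldefined1}, \ref{lemma:phihasinverse} and \ref{lemma:psicontinuous}.
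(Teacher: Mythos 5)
Your proposal is correct and follows exactly the paper's route: the paper proves this proposition simply by citing Lemmas \ref{lemma:welldefined}, \ref{lemma:equivandcont}, \ref{lemma:welldefined1}, \ref{lemma:phihasinverse} and \ref{lemma:psicontinuous}, and your write-up is just a more careful assembly of those same ingredients. The bookkeeping you add (that $\psi$ lands in $\phi^{-1}(\nsp)$ and that the two identities give bijectivity) is implicit in the paper and correctly handled.
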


\subsection{Zero sets of $M$ under the stationary measure}\label{proof}

In this section we will show that, under the stationary measure, the finite boundary has zero mass. The main ingredient we use is the following lemma, found in \cite[Lemma 4.5]{Tiozzo2}.
\begin{lemma}\label{lemma:zeromeasure}
	Let $G$ a countable group acting by homeomorphisms on a
	metric space $M$, $\mu$ a probability distribution on $G$ whose support generates $G$, and $\nu$ a $\mu$-stationary probability measure on $M$. Moreover,
	let us suppose that $Y\subset M$ has the property that there is a sequence of
	positive numbers $(\epsilon_n)$ such that for any translate $fY$ of $Y$ there is a
	sequence $(g_n)$ of group elements (which may depend on $f$), such that the
	translates $f Y$, $g_1^{-1} f Y$, $g_2^{-1} f Y$, $\ldots$ are all disjoint, and for each $g_n$, there is an $m\in N$, such that $\mu_m (g_n )>\epsilon_n$. Then $\nu(Y )= 0.$
\end{lemma}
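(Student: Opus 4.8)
The plan is to prove the apparently stronger statement that $\nu(hY)=0$ for \emph{every} $h\in G$; this suffices, since $\nu(Y)\le\sup_{h\in G}\nu(hY)$. Write $c:=\sup_{h\in G}\nu(hY)\in[0,1]$, which is finite because $\nu$ is a probability measure. The one preliminary observation is that a $\mu$-stationary measure is automatically $\mu_m$-stationary for every $m$: iterating the identity $\nu=\sum_{g}\mu(g)\,g\nu$ and using $\mu_m=\mu^{*m}$ gives
\[
\nu(A)=\sum_{g\in G}\mu_m(g)\,\nu(g^{-1}A)
\]
for every $m\ge 1$ and every measurable $A\subseteq M$.

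Next I would fix $\delta>0$ and choose $f\in G$ with $\nu(fY)>c-\delta$. Applying the hypothesis to the translate $fY$ produces a sequence $(g_n)$ for which $fY,g_1^{-1}fY,g_2^{-1}fY,\dots$ are pairwise disjoint and, for each $n$, an integer $m_n$ with $\mu_{m_n}(g_n)>\epsilon_n$. For a fixed $n$, I would split the $\mu_{m_n}$-stationarity identity for $A=fY$ into the term $g=g_n$ and the remaining terms, using that $g^{-1}fY=(g^{-1}f)Y$ is again a translate of $Y$ so $\nu(g^{-1}fY)\le c$:
\[
\nu(fY)\;\le\;\mu_{m_n}(g_n)\,\nu(g_n^{-1}fY)+\bigl(1-\mu_{m_n}(g_n)\bigr)c .
\]
Rearranging, and using $\nu(fY)-c>-\delta$ together with $0<\epsilon_n<\mu_{m_n}(g_n)\le 1$, this gives
\[
\nu(g_n^{-1}fY)\;\ge\;c+\frac{\nu(fY)-c}{\mu_{m_n}(g_n)}\;>\;c-\frac{\delta}{\epsilon_n}.
\]

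Finally I would exploit disjointness: for any fixed $N$, the sets $fY,g_1^{-1}fY,\dots,g_N^{-1}fY$ are disjoint subsets of $M$, so
\[
1\;\ge\;\nu(fY)+\sum_{n=1}^{N}\nu(g_n^{-1}fY)\;>\;(c-\delta)+\sum_{n=1}^{N}\Bigl(c-\frac{\delta}{\epsilon_n}\Bigr)\;=\;(N+1)c-\delta\Bigl(1+\sum_{n=1}^{N}\frac1{\epsilon_n}\Bigr).
\]
Since the sequence $(\epsilon_n)$ is fixed (it does not depend on $f$), the factor $1+\sum_{n\le N}\epsilon_n^{-1}$ is a genuine constant once $N$ is fixed, so letting $\delta\to 0$ (re-choosing $f$ for each $\delta$) yields $1\ge(N+1)c$. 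As $N$ is arbitrary, $c=0$, hence $\nu(Y)=0$.

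I expect the only real idea here — everything else being routine bookkeeping (promoting stationarity to all convolution powers, keeping the translates straight, and checking signs) — to be the decision to run the argument against $c=\sup_{h}\nu(hY)$ rather than against $\nu(Y)$ itself. The stationarity inequality is informative only when applied to a translate of nearly maximal $\nu$-mass, and it is at such a translate that the hypothesis forces the disjoint copies $g_n^{-1}fY$ to carry almost as much mass as $fY$, which is impossible for more than finitely many of them in a finite measure.
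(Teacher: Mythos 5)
Your proof is correct: the promotion of stationarity to convolution powers, the comparison against $c=\sup_h\nu(hY)$ at a nearly maximal translate, and the contradiction with finiteness of $\nu$ via disjointness all check out. The paper itself does not prove this lemma but imports it from Maher--Tiozzo \cite[Lemma 4.5]{Tiozzo2}, and your argument is essentially the one given there, so there is nothing further to compare.
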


Given $x\in T$ and $R>0$, we will consider the sets of finite boundary points which are at ``distance'' smaller than or equal to $R$ from $x$, that is, the points $p\in M$ such that there exists a sequence $(x_n)\subset T$ with $\rho(x,x_n)\le R$ such that $x_n\to p$. In other words, we define
$$\BM(x,R):=\overline{B(x,R)}\cap M,$$
where the closure is with respect to Tukia's topology.
A critical observation is that, by the definition of $M_F$, if $(R_i)_{i\in \N}$ is an ascending sequence going to infinity we get $\BM(x,R_i)\subset \BM(x,R_{i+1})$, and $\bigcup_{i\in \N}\BM(x,R_i)=M_F$.
Hence, if we prove that each of $\BM(x,R)$ has zero measure, then the ascending limit $M_F$ also has zero measure.

The first step we need to take to apply the lemma is proving that these balls behave well under the action $G$.
\begin{lemma}\label{lemma:natural}
	We have $g\BM(x,R)=\BM(gx,R)$, or, equivalently, $p\in \BM(x,R) \iff gp \in \BM(gx,R).$
\end{lemma}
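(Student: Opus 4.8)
The plan is to reduce the statement to two structural facts: that $G$ acts by isometries on $(T,\rho)$, and that $G$ acts by homeomorphisms on $T\cup M$ equipped with Tukia's topology. Granting these, the lemma is formal: a homeomorphism commutes with topological closure, an isometry sends a metric ball to a metric ball, and $M\subset T\cup M$ is $G$-invariant, so
\[
g\,\BM(x,R)=g\bigl(\cl(B(x,R))\cap M\bigr)=\cl\bigl(gB(x,R)\bigr)\cap gM=\cl(B(gx,R))\cap M=\BM(gx,R).
\]
The reformulation $p\in\BM(x,R)\iff gp\in\BM(gx,R)$ then follows by applying this identity to $g$ and to $g^{-1}$.

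The $G$-invariance of $\rho$ — hence the equality $gB(x,R)=B(gx,R)$ — is precisely the property of Sun's quasimetric recalled in Section~\ref{sect:metricspacetriples}: the annulus system $\mathcal{A}$ was chosen to be $G$-invariant, so the associated function $\rho$ on $T^2$ is $G$-invariant. The one point that genuinely needs checking is that $G$ acts by homeomorphisms on $T\cup M$. Since $G$ already acts by homeomorphisms on $T$ and on $M$ separately, and since $g$ and $g^{-1}$ play symmetric roles, it suffices to show that $g$ carries a basis of Tukia's topology into the topology. Recall such a basis consists of the open subsets of $T$ together with the sets $\widetilde{U}=\{w\in T:\ w\text{ has at least two components in }U\}\cup U$ for $U\subseteq M$ open. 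An open subset of $T$ is mapped to an open subset of $T$ because the action on $T$ is by homeomorphisms. For the sets $\widetilde{U}$, one reads off directly from $g(a,b,c)=(ga,gb,gc)$ that $w$ has at least two components in $U$ if and only if $gw$ has at least two components in $gU$, so $g\widetilde{U}=\widetilde{gU}$, and $gU$ is open in $M$ since $G$ acts by homeomorphisms on $M$. Thus $g$ sends basis elements to basis elements, hence is continuous, and the same argument for $g^{-1}$ shows $g$ is a homeomorphism of $T\cup M$.

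Combining the two facts gives the displayed chain of equalities and finishes the proof. I do not expect a real obstacle here; the only thing to be careful about is the bookkeeping in the middle step, namely that Tukia's topology on $T\cup M$ is genuinely $G$-invariant as a topology, which is what the computation $g\widetilde{U}=\widetilde{gU}$ secures.
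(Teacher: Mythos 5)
Your proof is correct and follows essentially the same route as the paper's: both arguments rest on the $G$-invariance of $\rho$ (so $gB(x,R)=B(gx,R)$) together with the compatibility of the $G$-action with Tukia's topology on $T\cup M$; the paper phrases the latter via convergence of sequences while you phrase it via commuting $g$ with the closure operator. Your explicit verification that $g\widetilde{U}=\widetilde{gU}$, hence that $G$ acts by homeomorphisms on $T\cup M$, is a detail the paper leaves implicit, and it is a welcome addition rather than a divergence.
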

\begin{proof}
	The point $p$ belongs in $\BM(x_0,R)$ if and only if there exists a sequence $(x_n)\subset B(x,R)$ with $x_n\to p$. Since $G$ acts by isometries on $T$ and by homeomorphisms on $M$, the previous is equivalent to $(gx_n)\subset gB(x,R)=B(gx,R)$ and $gx_n\to gp$, which is equivalent to $gp$ belonging in $\BM(gx_0,R)$.
\end{proof}

Next, we need to see that if the centers of the balls are far enough with respect to the radius, then the balls are disjoint. To prove that we first need a small lemma, which will come up later.
\begin{lemma}\label{lemma:boundp}
	If $x=(x^1,x^2,x^3)$ and $p\in \BM(x,R)$, then $(x^i,x^j|p)\le R$ whenever $i\neq j$, $1\le i,j\le 3$.
\end{lemma}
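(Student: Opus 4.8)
The plan is to argue by contradiction, using the chain description of the crossratio together with the concrete basis of Tukia's topology recalled after Lemma \ref{lemma:kleinanconvtobound}. Suppose that $(x^i,x^j|p)\ge R+1$ for some pair $i\neq j$. By the definition of the crossratio as the maximal length of a chain of annuli in $\mathcal{A}$, there are $A_1,\dots,A_{R+1}\in\mathcal{A}$ with
\[
\{x^i,x^j\}<A_1<A_2<\dots<A_{R+1}<\{p\}.
\]
In particular the last relation says $p\in\inte A_{R+1}^+$, so, setting $U:=\inte A_{R+1}^+$, the set $U$ is an open neighbourhood of $p$ in $M$.

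Next I would unwind the hypothesis $p\in\BM(x,R)$: it provides a sequence $(x_n)\subset T$ with $\rho(x,x_n)\le R$ for every $n$ and $x_n\to p$ in Tukia's topology. Since $\widetilde{U}$ is a basic open neighbourhood of $p$ in $T\cup M$, there is some $n$ with $x_n\in\widetilde{U}$, and as $x_n\in T$ this means that at least two coordinates of $x_n$, say $x_n^k$ and $x_n^l$ with $k\neq l$, lie in $U=\inte A_{R+1}^+$. Thus $\{x_n^k,x_n^l\}\subset\inte A_{R+1}^+$, that is, $A_{R+1}<\{x_n^k,x_n^l\}$, and appending this relation to the chain above produces
\[
\{x^i,x^j\}<A_1<A_2<\dots<A_{R+1}<\{x_n^k,x_n^l\},
\]
a chain of length $R+1$ between $\{x^i,x^j\}$ and $\{x_n^k,x_n^l\}$. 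Hence $(x^i,x^j|x_n^k,x_n^l)\ge R+1$.

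This contradicts $\rho(x,x_n)\le R$: by definition $\rho(x,x_n)$ is the maximum of the crossratios $(x^a,x^b|x_n^c,x_n^d)$ over $a\neq b$ and $c\neq d$, so $\rho(x,x_n)\ge (x^i,x^j|x_n^k,x_n^l)\ge R+1>R$. Therefore $(x^i,x^j|p)\le R$ for all $i\neq j$, as claimed. I do not expect a genuine obstacle here: the only steps that require minor care are the translation of "$x_n\to p$ in Tukia's topology" into "eventually two coordinates of $x_n$ lie in $U$", which is precisely the meaning of the neighbourhoods $\widetilde{U}$, and the observation that, for a two-point set, the annulus relation $A<\{a,b\}$ is by definition exactly the condition $a,b\in\inte A^+$.
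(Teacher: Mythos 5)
Your proof is correct and follows essentially the same route as the paper's: both argue by contradiction, extract a chain $\{x^i,x^j\}<A_1<\dots<A_{R+1}<\{p\}$, use that $\inte A_{R+1}^+$ is a neighbourhood of $p$ so that Tukia's topology forces two components of some $x_n$ into it, and then extend the chain to contradict $\rho(x,x_n)\le R$. No substantive differences.
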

\begin{proof}
	Assume $p\in \BM(x,R)$ and that the conclusion is false. 
	That is, assume that there exists $x_n\to p$ with $\rho(x,x_n)\le R$ and that there exists $i,j$ such that $(x^i,x^j|p)\ge R+1$. 
	Then, there exists an annulus sequence of length $R+1$ such that
	$$\{x^i,x^j\}<A_1<A_2<\ldots < A_{R+1}<\{p\}.$$
	By definition of the order relation, $p\in \inte A^+_{R+1}$, so $\inte A^+_{R+1}$ is a neighborhood of p. By definition of the convergence to the boundary, there exists $n_0$ big enough such that for all $n\ge n_0$ at least two components of $x_n$ are in $\inte(A^+_{R+1})$. Therefore, we also have the chain
	$$\{x^i,x^j\}<A_1<A_2<\ldots < A_{R+1}<\{x^k_n,x^l_n\},$$
	and hence $\rho(x,x_n)\ge R+1$, which is a contradiction.
\end{proof}

As a side note, coupling this last result with Lemma \ref{lemma:noboundp} we get an equivalent definition of finite boundary point.
\begin{corollary}
	Let $p\in M$. Then $p$ is a finite boundary point if and only if there are two points $a,b\in M\setminus p$ such that $(a,b|p)< \infty$.
\end{corollary}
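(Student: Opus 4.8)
The plan is to read the corollary off directly from Lemmas~\ref{lemma:noboundp} and~\ref{lemma:boundp}; no new construction is needed, only a pairing of the two one-directional statements already proved.

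For the implication that $p\in\fsp$ whenever $(a,b|p)<\infty$ for some distinct $a,b\in M\setminus p$, I would take $R:=(a,b|p)+1<\infty$, so that $(a,b|p)<R$, and then apply Lemma~\ref{lemma:noboundp} verbatim to conclude $p\in\fsp$.

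For the converse, I would start from the definition of finite boundary point: if $p\in\fsp$, there is a sequence $(x_n)\subset T$ and a finite $R$ with $x_n\to p$ in Tukia's topology and $\rho(x_0,x_n)\le R$ for all $n$, where $x_0=(x^1,x^2,x^3)$. Every $x_n$ then lies in $\overline{B(x_0,R)}$, and since $x_n\to p$ this forces $p\in\overline{B(x_0,R)}\cap M=\BM(x_0,R)$, using only the definition $\BM(x,R)=\overline{B(x,R)}\cap M$. Lemma~\ref{lemma:boundp} now applies and gives $(x^i,x^j|p)\le R<\infty$ whenever $i\neq j$. Finally, the coordinates $x^1,x^2,x^3$ are pairwise distinct, so at most one of them can equal $p$; choosing indices $i\neq j$ with $x^i,x^j\in M\setminus p$ yields a pair $a=x^i$, $b=x^j$ of distinct points of $M\setminus p$ with $(a,b|p)<\infty$, as wanted.

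I do not expect any genuine difficulty here: the argument is essentially bookkeeping once the two lemmas are available. The only step that deserves an explicit sentence is the identification $p\in\BM(x_0,R)$, which is immediate from the definitions, and the small pigeonhole remark at the end (two of the three coordinates of $x_0$ must avoid $p$), which is the only place one has to be slightly careful in order to produce an admissible pair $(a,b)$.
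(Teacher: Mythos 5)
Your proposal is correct and matches the paper's intent exactly: the paper states this corollary as an immediate consequence of "coupling" Lemma \ref{lemma:boundp} with Lemma \ref{lemma:noboundp}, and your write-up simply supplies the routine details (the identification $p\in \BM(x_0,R)$ and the pigeonhole choice of two coordinates of $x_0$ avoiding $p$). No gaps.
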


Next we prove that if the centers of the balls are far enough, then the balls are disjoint.
\begin{lemma}\label{lemma:disjoint}
	If $\rho(x,y)\ge R+ 2$, then $ \BM(x,R)\cap \BM(y,R)=\emptyset$.
\end{lemma}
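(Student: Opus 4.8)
The plan is to argue by contradiction: suppose $p \in \BM(x,R) \cap \BM(y,R)$ with $x = (x^1,x^2,x^3)$, $y = (y^1,y^2,y^3)$, and derive a lower bound on $\rho(x,y)$ that exceeds $R+1$. The idea is that $p$ being close (in Tukia's sense, within ``distance'' $R$) to both $x$ and $y$ forces the triples $x$ and $y$ to have small crossratio with a pair of points both near $p$, and then one combines two such chains through $p$ into a long chain between $x$ and $y$.

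First I would unpack what membership in the two balls gives us. By Lemma \ref{lemma:boundp}, $p \in \BM(x,R)$ gives $(x^i,x^j|p) \le R$ for all $i \ne j$, and similarly $(y^i,y^j|p) \le R$. But this alone is not quite enough to bound $\rho(x,y) = \max\{(x^i,x^j|y^k,y^l)\}$, since the chains realizing $(x^i,x^j|p)$ and $(y^k,y^l|p)$ both terminate at the \emph{singleton} $\{p\}$ and cannot be directly spliced. Instead I would go back to the defining sequences: there are $(x_n) \to p$ with $\rho(x,x_n)\le R$ and $(z_n)\to p$ with $\rho(y,z_n)\le R$. Fix $n$ large. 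Then for each pair of indices, $(x^i,x^j|x_n^k,x_n^l) \le R$; I want to conclude that for suitable indices the chain between $\{x^i,x^j\}$ and $\{x_n^k,x_n^l\}$ can be continued. The key point is that $x_n$ and $z_n$ both converge to $p$, so for large $n$ at least two components of $x_n$ and two of $z_n$ lie in any prescribed neighbourhood of $p$; choosing that neighbourhood to sit inside the interior of the $A^+$-side of the last annulus of a maximal chain realizing $(x^i,x^j|z_n^k,z_n^l)$, one gets that a maximal chain from $\{x^i,x^j\}$ to $\{x_n^{k'},x_n^{l'}\}$ of length at least $\rho(x,y)$ would have to exist, contradicting $\rho(x,x_n)\le R$ once $\rho(x,y) \ge R+2$.

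More concretely: suppose for contradiction $\rho(x,y) \ge R+2$, so there exist indices $i\ne j$, $k \ne l$ with $(x^i,x^j|y^k,y^l) \ge R+2$, realized by a chain $\{x^i,x^j\} < A_1 < \cdots < A_{R+2} < \{y^k,y^l\}$. Then $\inte A_{R+2}^+$ is an open neighbourhood of both $y^k$ and $y^l$. Since $z_n \to p$ and $\rho(y,z_n) \le R$, and since... here I need $p$ to relate to $y^k, y^l$. The cleanest route: use that $p \in \BM(y,R)$ together with Lemma \ref{lemma:boundp} to get $(y^k,y^l|p) \le R$, hence no chain of length $R+2$ from $\{x^i,x^j\}$ to $\{y^k,y^l\}$ can have its terminal annulus containing $p$ in $\inte A^+$ — but actually the chain above does have $p$ considerations only if $p \in \inte A_{R+2}^+$, which need not hold. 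So instead I route through $x_n$: take $n$ large so that at least two components of $x_n$ lie in $\inte A_{R+2}^+$ (possible since $x_n \to p$ and... wait, this needs $p \in \inte A_{R+2}^+$).

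The main obstacle, then, is exactly this splicing bookkeeping: making sure the ``two components near $p$'' argument produces a chain from $x$'s components to $x_n$'s components of length $\ge R+1$, forcing $\rho(x,x_n) \ge R+1 > R$. I expect the correct argument to run: if $\rho(x,y) \ge R+2$ then $\max_{i\ne j,k\ne l}(x^i,x^j|y^k,y^l) \ge R+2$; fix such a maximal chain $\{x^i,x^j\} < A_1 < \cdots < A_{R+2} < \{y^k,y^l\}$; now since $p \in \BM(y,R)$, pick $(z_n)\to p$ with $\rho(y,z_n)\le R$, so at least two components of $z_n$ eventually lie in $\inte A_1^-$ (as $\{y^k,y^l\} \subset \inte A_{R+2}^+ \subset \cdots$, and $p$ is a limit point forcing... ). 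Hmm — the robust statement is surely: $p \in \BM(x,R)$ forces $p$ to sit "at distance $\le R$" meaning any annulus chain from $x$ reaching a neighbourhood basis of $p$ has length $\le R$; dualize on the $y$ side; glue. I would write it as: by the proof technique of Lemma \ref{lemma:boundp}, $p \in \BM(x,R)$ implies that for \emph{any} neighbourhood $W$ of $p$ there is no chain $\{x^i,x^j\} < A_1 < \cdots < A_{R+1}$ with $W \subseteq \inte A_{R+1}^+$ for some $i \ne j$; and symmetrically $p \in \BM(y,R)$ forbids chains of length $R+1$ from $\{y^k,y^l\}$ toward $p$. Given a realizing chain for $\rho(x,y) \ge R+2$, its middle annulus $A_{m}$ (with $m$ chosen so both halves have length $\ge \lceil (R+2)/2 \rceil$, or rather split as $R+1 = (\text{length} \ge 1) + (\text{length} \ge R+1)$ cleverly) satisfies $p \notin \inte A_m^-$ and $p \notin \inte A_m^+$, i.e. $p \in A_m^- \cup A_m^+$ is impossible unless $p$ is on the "boundary", but these are disjoint closed sets whose union is not all of $M$ — I'd need $p \in M \setminus (A_m^- \cup A_m^+)$ to get the contradiction. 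Finishing this split is the crux; the rest is routine. I'd set the threshold at $R+2$ precisely so that after removing one annulus from each end we retain chains of length $\ge R+1$ on the relevant side to invoke the $\BM(x,R)$, $\BM(y,R)$ constraints and reach the contradiction.
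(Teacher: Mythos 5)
Your overall strategy is the paper's: argue by contradiction, invoke Lemma \ref{lemma:boundp} to bound $(x^i,x^j|p)$ and $(y^k,y^l|p)$ by $R$, and split a chain realizing $\rho(x,y)$ at the position of $p$. But the step you yourself flag as ``the crux'' is exactly the step you leave undone, and it is the whole content of the proof. The missing observation is elementary but essential: by definition of $A_i<A_{i+1}$ we have $\inte A_{i+1}^-\cup\inte A_i^+=M$, so for every $i$ the point $p$ lies in $\inte A_i^+$ or in $\inte A_{i+1}^-$. Taking $i_0$ to be the largest index with $p\in\inte A_{i_0}^+$ (or $i_0=0$ if there is none), one gets $p\notin\inte A_{i_0+1}^+$, hence $p\in\inte A_{i_0+2}^-$, which produces the two chains $\{x^1,x^2\}<A_1<\cdots<A_{i_0}<\{p\}$ and $\{p\}<A_{i_0+2}<\cdots<A_n<\{y^1,y^2\}$. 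There is no need to ``splice'' chains or to route through the defining sequences $(x_n)$, $(z_n)$ as you attempt in your middle paragraphs; Lemma \ref{lemma:boundp} already converts membership in $\BM(x,R)$ into the crossratio bound you need, and the argument is a split, not a concatenation. Your search for a single ``middle annulus'' with $p\notin\inte A_m^-\cup\inte A_m^+$ is the wrong dichotomy: such an annulus need not exist, and the covering property above is what replaces it.

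Your arithmetic at the threshold is also wrong, in a way that matters. The index $i_0$ where the chain splits is dictated by where $p$ sits, not chosen by you, so from a chain of length $R+2$ you only get halves of lengths $i_0$ and $(R+1)-i_0$, and for $1\le i_0\le R$ neither reaches $R+1$; the claim that $R+2$ suffices ``after removing one annulus from each end'' does not hold. What the argument actually needs is a chain of length $2R+2$: then the halves have lengths $i_0$ and $2R+1-i_0$, and whichever case holds ($i_0\ge R+1$ or $i_0\le R$) one of them is at least $R+1$, contradicting Lemma \ref{lemma:boundp}. This is the bound the paper's proof uses and the one consumed in Proposition \ref{stickyzero} (the statement of the lemma as printed, with $R+2$, appears to be a typo for $2R+2$); by following the printed hypothesis literally you have set yourself a target that this method cannot reach.
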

\begin{proof}
	Write $x=(x^1,x^2,x^3)$, $y=(y^1,y^2,y^3)$ and assume $p\in \BM(x,R)\cap \BM(y,R)$. Assume that the distance between $x$ and $y$ is realized by $(x^1,x^2|y^1,y^2)$. Then, $(x^1,x^2|y^1,y^2)\ge 2R+2$, and hence we have the chain
	$$\{x^1,x^2\}<A_1<A_2<\ldots <A_{2R+2}<\{y^1,y^2\}.$$
	By definition of the relation, $\inte A^-_{i+1}\cup \inte  A^+_{i}=M$, so for each $i$, the point $p$ belongs to either $\inte A^-_{i+1}$ or $\inte A^+_i$.
	Let $i_0$ be the biggest $i$ such that $p\in \inte A^+_i$. We have the chain
	$$\{x^1,x^2\}<A_1<A_2<\ldots <A_{i_0}<\{p\},$$
	so if $i_0\ge R+1$ we get a contradiction with Lemma \ref{lemma:boundp}, since $(x^1,x^2|p)\ge R+1$. If $i_0\le R$ we have that $p\notin \inte A^+_{i_0+1}$, so, by definition of the relation, $p\in \inte  A^-_{i_0+2}$. We get the chain
	$$\{p\}<A_{i_0+2}<A_{i_0+3}<\ldots <A_{2R+2}<\{y^1,y^2\},$$
	and we have again a contradiction with Lemma \ref{lemma:boundp}.
\end{proof}

With this we can prove the result we anticipated at the beginning of the section.
\begin{proposition}\label{stickyzero}
	Let $G$ be a minimal, non-elementary convergence group on a compact metrizable space $M$, and $\mu$ a probability measure on $G$ such that its support generates $G$. If $\nu$ is the $\mu$-stationary Borel probability measure, then $\nu(\fsp)=0$.
	
\end{proposition}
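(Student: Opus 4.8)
The plan is to apply the zero-measure criterion of Lemma~\ref{lemma:zeromeasure} to the exhausting family $\BM(x,R)$. Since $\bigcup_{i}\BM(x,R_i)=\fsp$ for any sequence $R_i\uparrow\infty$, and the measure of an increasing union is the limit of the measures, it suffices to fix $x\in T$ and $R>0$ and prove $\nu(\BM(x,R))=0$. So I would set $Y:=\BM(x,R)$ and produce: a sequence of positive reals $(\epsilon_n)$ \emph{not depending on the translate}, and, for every translate $fY$, elements $g_n$ so that the translates $fY,\,g_1^{-1}fY,\,g_2^{-1}fY,\dots$ are pairwise disjoint and $\mu_{m_n}(g_n)>\epsilon_n$ for suitable $m_n$.

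The key ingredient is a single well-chosen element of the semigroup $\Gamma$ generated by $\operatorname{supp}(\mu)$. Since $\Gamma$ generates $G$ as a group, $G$ acts on $M$ non-elementarily, and $G$ acts isometrically on $(T,\rho)$ (equivalently on the quasi-isometric hyperbolic space $S$ of Sun \cite{binsun}) with loxodromic elements --- e.g. the power $h=g^N$ of Tukia's element from Theorem~\ref{theo:preloxodromic} --- the semigroup $\Gamma$ itself contains a loxodromic element $h_0$ for the action on $(T,\rho)$; this is the standard fact that a subsemigroup generating a non-elementary group acting on a hyperbolic space contains loxodromics (transported across the quasi-isometry $T\hookrightarrow S$). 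Applying Proposition~\ref{prop:loxodromic} to $h_0$ gives $N_0\in\N$ and $M_0>0$ with $\inf_{y\in T}\rho(y,h_0^{nN_0}y)\ge nM_0$ for all $n\ge1$. Fix $K\in\N$ with $KM_0\ge R+2$ and set $w:=h_0^{N_0K}\in\Gamma$; then $\rho(y,w^m y)\ge|m|(R+2)$ for every $y\in T$ and every $m\in\Z\setminus\{0\}$ (the case $m<0$ follows by applying the isometry $w^{-m}$ and using symmetry of $\rho$), whence $\rho(w^iy,w^jy)=\rho(y,w^{i-j}y)\ge|i-j|(R+2)$ for $i\neq j$. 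Finally, writing $h_0=s_1\cdots s_\ell$ with each $s_t\in\operatorname{supp}(\mu)$ and $c:=\prod_{t}\mu(s_t)>0$, convolution gives $\mu_{mN_0K\ell}(w^m)\ge c^{mN_0K}>0$ for all $m\ge1$.

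Now I would take $\epsilon_n:=\tfrac12 c^{nN_0K}$ and $g_n:=w^n$; note that neither $(\epsilon_n)$ nor $(g_n)$ depends on the translate. Given any translate $fY$, Lemma~\ref{lemma:natural} identifies it with $\BM(fx,R)=\BM(w^0fx,R)$ and identifies $g_n^{-1}fY=w^{-n}fY$ with $\BM(w^{-n}fx,R)$. For all $i\neq j$ in $\{0,1,2,\dots\}$ (so in particular for $fY$ itself, at index $0$) the centres satisfy $\rho(w^{-i}fx,w^{-j}fx)=\rho(fx,w^{i-j}fx)\ge|i-j|(R+2)\ge R+2$, so Lemma~\ref{lemma:disjoint} shows $fY,\,g_1^{-1}fY,\,g_2^{-1}fY,\dots$ are pairwise disjoint; and $\mu_{nN_0K\ell}(g_n)=\mu_{nN_0K\ell}(w^n)\ge c^{nN_0K}>\epsilon_n$. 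Hence Lemma~\ref{lemma:zeromeasure} applies and yields $\nu(\BM(x,R))=0$. Letting $R=R_i\to\infty$ and using $\bigcup_i\BM(x,R_i)=\fsp$ gives $\nu(\fsp)=0$.

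The one genuinely delicate point I expect is the existence of a loxodromic element \emph{inside the semigroup} $\Gamma$ rather than merely inside $G$: this is what makes the elements $g_n$ have positive, quantitatively controlled transition probability, and --- crucially --- it lets the $g_n$, and therefore the constants $\epsilon_n$, be chosen independently of the translate $f$, which is exactly the uniformity that Lemma~\ref{lemma:zeromeasure} demands. Everything else is routine bookkeeping combining Proposition~\ref{prop:loxodromic}, the $G$-equivariance of the balls $\BM(\cdot,R)$ (Lemma~\ref{lemma:natural}), and their disjointness at large distance (Lemma~\ref{lemma:disjoint}).
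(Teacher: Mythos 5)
Your proof is correct and follows essentially the same route as the paper: apply Lemma~\ref{lemma:zeromeasure} to $Y=\BM(x,R)$, using Lemma~\ref{lemma:natural} to identify the translates as balls $\BM(\cdot,R)$, Proposition~\ref{prop:loxodromic} to push the centres $w^{-n}fx$ apart, Lemma~\ref{lemma:disjoint} for disjointness, and the exhaustion $\bigcup_{R}\BM(x,R)=\fsp$. The only divergence is the point you flag yourself: the paper simply takes Sun's loxodromic $g$ and asserts $\mu_{m(n)}(g^{nN})>0$ directly from ``the support of $\mu$ generates $G$'' (reading generation as semigroup generation, as in Maher--Tiozzo's Lemma~\ref{lemma:zeromeasure}), whereas you instead manufacture a loxodromic inside the semigroup via an uncited ``standard fact''; this is a legitimate, translate-independent choice that makes the positivity of $\mu_{m}(g_n)$ quantitative, but it imports an input the paper does not need under its reading of the hypothesis.
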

\begin{proof}
	We want to apply Lemma \ref{lemma:zeromeasure} with $Y=\BM(x,R)$, where $x\in T$ is fixed. By Lemma \ref{lemma:natural}, all translations of $Y$ are of the form $\BM(y,R)$. Let $g\in G$ be the loxodromic element determined by Sun in \cite{binsun}. 
	By Proposition \ref{prop:loxodromic}, we can take $N>0$ such that $\inf(\rho(x,g^{nN}x))\ge n(2R+2)$. 
	By Lemma \ref{lemma:disjoint}, for any $f\in G$ the sets $fY, g^{-N}fY,g^{-2N}fY,\ldots$ are disjoint, since the distance between any of the centers of the balls is greater than or equal to $2R+2$. 
	Since the support of $\mu$ generates $G$, there is $m(n)$ such that $\mu_{m(n)}(g^{nN})>0$, so labeling $\epsilon_n:=\mu_{m(n)}(g^{nN})$ we can apply Lemma \ref{lemma:zeromeasure} and we get $\nu(\BM(x,R))=0$.
	We finish by recalling that $\BM(x,R)\to \fsp$ as $R\to \infty$.
\end{proof}

The set where the measure has all of its mass can be restricted a little further. To do this, we observe that $\nsp$, which has full mass, may depend on the metric $\rho$, which in turn only depends on the chosen annuli system. Since we always deal with annuli systems generated by a single annulus $A:=\{A^-,A^+\}$, the infinite boundary depends only on the annulus $A$, or more specifically, on the sets $A^-,A^+$, so we can write $\nsp(A^-,A^+)$. Therefore, if we choose a countable family of annuli $\{A^-_i,A^+_i\}_{i\in \N}$ such that Sun's construction works, we will get a countable family of sets, $\nsp^i:=\nsp(A^-_i,A^+_i)$, where $\nu(\nsp^i)=1$, and by intersecting them we still get $\nu(\bigcap_{i\in \N}\nsp^i)=1$. 

Looking at Sun's construction we see that, for the construction to work, the conditions on $A^-$ and $A^+$ are the following:
\begin{itemize}
	\item $A^-$ and $A^+$ are closed, and $A^-\cap A^+=\emptyset$;
	\item there exists an element $g\in G$ behaving like the one described in Theorem \ref{theo:preloxodromic}, such that if $a^-,a^+\in M$ are its fixed points, $a^-\in \inte A^-$ and $a^+\in \inte A^+$.
\end{itemize}

Choosing a particular family of acceptable generating annuli we get the following.
\begin{proposition}\label{prop:ultraconic}
	Let $\nu$ be the Borel $\mu$-stationary measure on $M$ and let $g\in G$ be such that it fixes two distinct points $a^-,a^+$ and that $g^n|_{M\setminus a^-}$ converges to $a^+$ locally uniformly as $n\to \infty$. 
	Denote $\nsp^g$ the set of points $p\in M$ such that there exists a sequence $(g_n)$ (depending on $p$) with $a^-$ and $a^+$ being its attracting and repelling points (they can be either).
	Then $\nu(\nsp^g)=1$.
\end{proposition}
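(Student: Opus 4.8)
The plan is to exhibit $\nsp^g$ as a superset of a countable intersection of infinite-boundary sets attached to a shrinking family of annuli around $a^-$ and $a^+$: each of these sets has full $\nu$-measure by Proposition~\ref{stickyzero}, and membership in all of them will, via Proposition~\ref{prop:conicpointsareinfinitebdpoints}, supply a single sequence of group elements pushing $p$ to one of $a^{\pm}$ and the rest of $M$ to the other. Unwinding the definition, the statement asks us to show that for $\nu$-almost every $p \in M$ there is a sequence $(g_n) \subset G$ with $g_n p$ converging to one of $a^-, a^+$ and $g_n x$ converging to the other for every $x \neq p$.

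First I would fix a metric on $M$ and choose closed sets $A^-_1 \supseteq A^-_2 \supseteq \cdots$ and $A^+_1 \supseteq A^+_2 \supseteq \cdots$ with $a^- \in \inte A^-_i$, $a^+ \in \inte A^+_i$, $\operatorname{diam} A^-_i \to 0$, $\operatorname{diam} A^+_i \to 0$ (so $\bigcap_i A^-_i = \{a^-\}$ and $\bigcap_i A^+_i = \{a^+\}$), and $A^-_i \cap A^+_i = \emptyset$, $M \setminus (A^-_i \cup A^+_i) \neq \emptyset$ for every $i$; this is possible once $i$ is large, since $M$ is metrizable and the action is non-elementary, so $M$ has more than two points. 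Each pair $(A^-_i, A^+_i)$ satisfies the two conditions needed for Sun's construction: the sets are closed and disjoint, and the element $g$ of the statement is precisely of the type in Theorem~\ref{theo:preloxodromic}, with its fixed points $a^-$ and $a^+$ in $\inte A^-_i$ and $\inte A^+_i$. So each $(A^-_i,A^+_i)$ generates a $G$-invariant hyperbolic path quasimetric $\rho_i$ on $T$ with an associated infinite boundary $\nsp^i := \nsp(A^-_i,A^+_i)$, and Proposition~\ref{stickyzero} gives $\nu(\nsp^i) = 1$; hence $\nu(\bigcap_i \nsp^i) = 1$.

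The heart of the argument is the inclusion $\bigcap_i \nsp^i \subseteq \nsp^g$. Fix $p$ in the intersection. For each $i$, Proposition~\ref{prop:conicpointsareinfinitebdpoints} applied to the annulus system generated by $(A^-_i,A^+_i)$ gives a sequence $(g^i_k)_k$ of distinct elements with $g^i_k p \to c_i$ and $g^i_k x \to b_i$ for all $x \neq p$, where $c_i, b_i$ lie one in $A^+_i$ and one in $A^-_i$. Since $\operatorname{diam} A^{\pm}_i \to 0$ and $a^{\pm} \in A^{\pm}_i$, after passing to a subsequence in $i$ we may assume $c_i \to a^+$ and $b_i \to a^-$ (the opposite assignment is handled identically and is allowed by the definition of $\nsp^g$). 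As $g^i_k p \to c_i \neq b_i$, the repelling point of every convergence-subsequence of $(g^i_k)_k$ is forced to be $p$ and its attracting point $b_i$, so $g^i_k|_{M \setminus \{p\}} \to b_i$ locally uniformly in $k$. I would then diagonalize: fixing compacta $K_1 \subseteq K_2 \subseteq \cdots$ exhausting $M \setminus \{p\}$, choose $k(i)$ so large that $h_i := g^i_{k(i)}$ satisfies $h_i(K_i) \subseteq B(b_i, 1/i)$ and $h_i p \in B(c_i, 1/i)$. Then $h_i p \to a^+$, and for any compact $K \subseteq M \setminus \{p\}$ and any neighbourhood $U$ of $a^-$ we have $K \subseteq K_i$ and $B(b_i,1/i) \subseteq U$ for all large $i$, whence $h_i(K) \subseteq U$; thus $h_i|_{M \setminus \{p\}} \to a^-$ locally uniformly. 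A value occurring infinitely often among the $h_i$ would be a homeomorphism of $M$ collapsing a nonempty open subset onto the single point $a^-$, which is impossible; so after passing to a further subsequence $(h_i)$ consists of distinct elements and certifies $p \in \nsp^g$. Finally, $\nsp^g \supseteq \bigcap_i \nsp^i$, a Borel set of full $\nu$-measure, so $\nu(\nsp^g) = 1$.

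The step I expect to be the main obstacle is this diagonalization. Proposition~\ref{prop:conicpointsareinfinitebdpoints} only places the two limit points of each conical sequence somewhere inside the annuli rather than exactly at $a^{\pm}$, which is why one needs a countable shrinking family and must then merge countably many conical sequences into a single one whose dynamics are simultaneously controlled on all of $M \setminus \{p\}$. The delicate points are keeping the convergence locally uniform — not just pointwise — through the diagonal choice of indices, so that the exhausting compacta $K_i$ can be moved uniformly into shrinking neighbourhoods of the $b_i$, and arranging that the amalgamated sequence be made of distinct elements, so that the convergence-group notions of attracting and repelling point genuinely apply to it.
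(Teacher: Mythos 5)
Your argument is correct and follows essentially the same route as the paper: a countable family of shrinking annuli around $a^{\pm}$, full measure of the intersection $\bigcap_i \nsp^i$ via Proposition \ref{stickyzero}, and a diagonal extraction from the conical sequences supplied by Proposition \ref{prop:conicpointsareinfinitebdpoints}. Your write-up is in fact somewhat more careful than the paper's at the diagonalization step (explicit exhaustion by compacta to secure local uniform convergence, and the distinctness check), but the underlying idea is identical.
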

\begin{proof}
	Equip $M$ with a metric $\d_M$, and define the sets 
	$$A^-_i=\overline{B(a^-,\d_M(a^-,a^+)/i)} \text{ and } A^+_i=\overline{B(a^+,\d(a^-,a^+)/i)}.$$
	For $i\ge 3$, these sets define admissible annuli, and we get a family of annuli systems as described above and an associated family of infinite boundary points $\nsp^i$. The countable intersection $\widetilde{\nsp^g}:=\bigcap_{i\ge 3} \nsp^i$ has full mass, so let's see how any $p\in \widetilde{\nsp^g}$ behaves. By Proposition \ref{prop:conicpointsareinfinitebdpoints}, for each $i$ we will have $(g_n^i)_n$ with either $g_n^ip\to A^-_i$ and $g_n^i x \to A^+_i$ for all $x\neq p$ or $g_n^ip\to A^+_i$ and $g_n^i x \to A^-_i$ for all $x\neq p$. We assume now that the first one happens infinitely many times for $(i_k)$, and we take a convergent subsequence of each $(g_n^{i_k})$ (which we relabel as $(g_n^{i_k})$). Taking $V$ an open set around $p$, for each $i_k$ there exists $n_k$ big enough so $g_{n_k}^{i_k}p\subset A^-_{i_k}$ and $g_{n_k}^{i_k} (M-V)\subset A^+_{i_k-1}$. 
	By definition of the sets, as $k\to \infty$ we have $g_{n_k}^{i_k}p\to a^-$ and $g_{n_k}^{i_k}(M-V)\to a^+$. So, taking a convergent subsequence $(h_j)\subset(g_{n_k}^{i_k})_k$, we get a sequence with $a^+$ as the attracting point and $a^-$ as the repelling. Hence, $\widetilde{\nsp^g}\subset\nsp^g$, so $\nu(\nsp^g)=1$.
\end{proof}

\subsection{The Poisson boundary of convergence groups}

By Sun's construction \cite{binsun}, the action of $G$ on $(T,\rho)$ (or rather, on the quasi-isometric metric space $(S,\rho')$) has a WPD element and is non-elementary. So, using Maher and Tiozzo's theorem \cite[Theorem 1.4]{Tiozzo3}, if $\mu$ satisfies the required conditions, then the Gromov boundary of $S$ coupled with the hitting measure $\nu$ is a model for the Poisson boundary of $(G,\mu)$.
Using Propositions \ref{prop:phihomeo} and \ref{stickyzero}, we are able to prove Theorem \ref{theo:poissboundary}.

\begin{proof}[Proof of Theorem \ref{theo:poissboundary}]
	Let $(S,\rho')$ be the hyperbolic space quasi-isometric to $T$ by the $G$-equivariant quasi-isometry $f$ obtained in \cite{binsun}.
	To apply \ref{theo:poissonacylindrical} to $(S,\rho')$ we need to see that the measure $\mu$ has finite logarithmic moments. 
	For this we denote by $\d_w$ the word metric with reference to some finite generating set $H$. 
	By definition of the word metric, for each $g$ we have $h_1,h_2,\ldots h_{d_w(e,g)}\in H$ such that $g=h_{1} h_{2}\ldots h_{{\d_w(e,g)}}$. 
	Hence, as $\rho$ is a quasimetric, using the triangle inequality and the invariance of $\rho$, we obtain
	$$\rho(x_0,gx_0)\le \sum_{i=1}^{\d_w(e,g)} \rho(x_0,h_{i} x_0) + r \le (\sup_{h\in H}(\rho(x_0,h x_0))+r)d_w(e,g)= C d_w(e,g).$$
	Using this, we get a bound for the logarithmic moment of $\mu$ under the distance $\rho'$. 
	Looking at the definition of $\rho'$ in Sun's construction, we see that if $\rho(x,y)=0$, then $\rho'(x,y)=1$, and that $\rho'(x,y)\ge1$ whenever $x\neq y$. 
	Hence, 
	$$\E[\log(\rho'(x_0,gx_0))]= \E[\log(\rho'(x_0,gx_0));\rho(x_0,gx_0)\ge 1]$$
	Therefore, using the upper bound $\rho'(x_0,gx_0)) \le K\rho(x_0,gx_0)+K$, we get
	\begin{align*}
	\E[\log(\rho'(x_0,gx_0))]&\le \E[ \log( K\rho(x_0,gx_0)+K); \rho(x_0,gx_0)\ge 1]\\
	&\le \E[ \log( KC\d_w(e,g)+K)]\le\log(KC)+K+\E[\log(\d_w(e,g))],
	\end{align*}
	which is finite by hypothesis. Hence, we can apply Theorem \ref{theo:poissonacylindrical} and, denoting the hitting measure on $\partial S$ as $\tilde{\tilde{\nu}}$, we have that $(\partial S,\tilde{\tilde{\nu}})$ is the Poisson boundary of the random walk $(G,\mu)$.
	Recall that $\partial S$ and $\partial T$ are homeomorphic by the induced action of $f$, which is $G$-equivariant, so $\tilde{\nu}:=f_*(\tilde{\tilde{\nu}})$ is $\mu$-stationary, and $(\partial T,\tilde{\nu})$ is equivalent to $(\partial S,\tilde{\tilde{\nu}})$ as measure space (and hence it is the Poisson boundary).
	
	Let $\phi$ be the $G$-equivariant application built in section \ref{sect:gromboundrayofT}, and $\psi$ the inverse on $M_\infty$. By $G$-equivariance, the probability measure $\nu:=\phi_*(\tilde{\nu})$ on $M$ is also $\mu$-stationary, and by continuity it is Borel so, by Proposition \ref{stickyzero}, $\nu(\nsp)=1$. Therefore, $\psi_*(\nu)=\tilde{\nu}$ so the two spaces are equivalent as measure spaces via a $G$-equivariant map, and hence $(M,\nu)$ is the Poisson boundary.
\end{proof}

\section{Applications}

\subsection{Compactification of $G$}\label{sect:sectcompact}

Using the topology we used to paste $M$ to $T$ we can paste $M$ to $G$ in a similar way.
That is, fix $x\in T$ and for any $U\subset M$ open we can consider the subset of $G\cup M$ given by 
$$\widetilde{U}_G:=\{g\in G\mid gx\text{ has two components in } U\}\cup U.$$
The family $B$ of sets of this form, together with $\mathcal{P}(G)$ (that is, all open sets of $G$, as it has the discrete topology), forms a basis for a topology on $G\cup M$. Therefore, we may consider the generated topology.

\begin{proposition}
	The topology on $G\cup M$ defined above does not depend on the basepoint $x$.
\end{proposition}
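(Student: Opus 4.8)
The plan is to show that for two basepoints $x,y\in T$ the resulting topologies, which I will call $\tau_x$ and $\tau_y$, coincide. By symmetry it suffices to prove $\tau_x\subseteq\tau_y$, and since $\mathcal P(G)$ lies in both bases this reduces to showing that each set $\widetilde U^x_G:=\{g\in G\mid gx\text{ has two components in }U\}\cup U$ is $\tau_y$-open. For the points of $\widetilde U^x_G$ lying in $G$ the singleton is a $\tau_y$-neighbourhood inside it, so the real content is: every $p\in U$ has a $\tau_y$-basic neighbourhood $\widetilde V^y_G$ contained in $\widetilde U^x_G$. The key to this is the following \emph{basepoint-free} observation. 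If $(g_n)\subset G$ is a sequence of distinct elements with attracting and repelling points $b,a\in M$ (which exist after passing to a subsequence, by the convergence property), then for every $z\in T$ one has $g_nz\to b$ in Tukia's topology on $T\cup M$. Indeed, writing $z=(z^1,z^2,z^3)$, the three components are distinct, so there is an open neighbourhood $W$ of $a$ with $W\cap\{z^1,z^2,z^3\}\subseteq\{a\}$; hence at least two components of $z$, say $z^i$ and $z^j$, lie in the compact set $K:=M\setminus W\subseteq M\setminus a$. Given any open neighbourhood $U'$ of $b$, local uniform convergence of $g_n|_{M\setminus a}$ to $b$ gives $N$ with $g_n(K)\subseteq U'$ for $n>N$, so $g_nz^i,g_nz^j\in U'$ and $g_nz\in\widetilde{U'}$; as $U'$ was arbitrary, $g_nz\to b$.

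Now fix $p\in U$ and put $S:=\{gy\mid g\in G,\ gx\text{ has at most one component in }U\}\subseteq T$. I claim $p$ is not in the Tukia closure of $S$. If it were, then, using that Tukia's topology on $T\cup M$ is first countable at $p$ (for a fixed metric on $M$ the sets $\widetilde{B(p,1/n)}$ form a countable neighbourhood basis at $p$), there would be $g_n\in G$ with $g_nx$ having at most one component in $U$ and $g_ny\to p$. The sequence $(g_n)$ has no constant subsequence, because a fixed triple $g_0y\in T$ does not Tukia-converge to $p$: a small enough neighbourhood $U'$ of $p$ in $M$ contains at most one of the three distinct components of $g_0y$, so $g_0y\notin\widetilde{U'}$. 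Hence we may pass to a subsequence of distinct elements and then, by the convergence property, to a further subsequence with attracting point $b$. The observation above gives $g_nx\to b$ and $g_ny\to b$ in Tukia's topology. But Tukia's topology separates the distinct points $b,p\in M$ (for disjoint open sets $U_b\ni b$ and $U_p\ni p$ in $M$ the sets $\widetilde{U_b}$ and $\widetilde{U_p}$ are disjoint, since a triple cannot have two components in $U_b$ and two in $U_p$ simultaneously), so $g_ny\to p$ forces $b=p$, and then $g_nx\to p$ means that for large $n$ the triple $g_nx$ has two components in $U$, contradicting the choice of $(g_n)$.

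Granting the claim, choose an open $V$ with $p\in V\subseteq U$ and $\widetilde V\cap S=\emptyset$. Then $\widetilde V^y_G\subseteq\widetilde U^x_G$: first $\widetilde V^y_G\cap M=V\subseteq U$, and second, if $g\in G$ has $gy$ with two components in $V$ then $gy\in\widetilde V$, so $gy\notin S$, so $gx$ has two components in $U$, that is $g\in\widetilde U^x_G$. Hence each $\widetilde U^x_G$ is $\tau_y$-open, so $\tau_x\subseteq\tau_y$, and by symmetry $\tau_x=\tau_y$. The one step that requires genuine thought is the basepoint-free observation; the rest is bookkeeping with the two topologies. Once one recognises that the asymptotics of a group sequence in $T$ are governed by its attracting point in $M$ — an object that does not mention the basepoint — independence of the topology on $G\cup M$ follows essentially formally.
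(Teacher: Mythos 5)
Your proof is correct and rests on the same key idea as the paper's: after passing to a convergence subsequence, $g_n z$ tends to the attracting point for every $z\in T$ (since at least two components of a triple avoid the repelling point), so the limit of $(g_n x)$ in $M$ cannot depend on the basepoint. You package this as openness of basic sets via a closure argument where the paper invokes first countability and compares convergent sequences directly, but the substance is identical; if anything you are more explicit about ruling out constant subsequences before applying the convergence property.
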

\begin{proof}
	Given a point in $M$, we can take a countable neighborhood basis in $M$, and we get a corresponding countable neighborhood basis in $G\cup M$. Since $G$ has the discrete topology, a point in $G$ itself is a neighborhood, so we have a countable neighborhood basis. Therefore, $G\cup M$ is first countable, and the topology is characterized by convergence along sequences. 
	
	Consider a sequence $(g_n)\subset G$ with $g_n\to p$, which by definition is equivalent to $g_nx\to p$ in Tukia's topology.
	Take $y\in T$ and assume that $g_{n}y$ does not converge to $p$.
	Then there exists an open neighborhood of $p$ in $T\cup M$ of the form $\widetilde{U}$ and a subsequence of $(g_{n_k})\subset (g_n)$ such that $g_{n_k}y$ does not enter $\widetilde {U}$.
	However, by the convergence property, we can take a convergent subsequence of $g_{n_k}$ (in the sense of convergence groups) which, since $g_nx\to p$, has $p$ as attracting point.
	Hence $g_{n_k}y$ enters $ \widetilde{U}$ eventually and hence $g_{n_k}$ enters $\widetilde{U}_G$. 
	That is, $g_n$ converges to $p$ in the topology generated by taking $y$ as a basepoint, so doing the same reasoning the other way, $g_n$ converges to $p$ in the topology generated by taking $y$ as basepoint if and only if it also converges to $p$ with the topology generated by taking $x$ as basepoint.
\end{proof}

We observe that $G$ acts by homeomorphisms on $G\cup M$ since 
$$h\widetilde{U}_G=\{hg\in G\mid gx\in \widetilde{U}\}\cup hU=\{g\in G\mid gx\in \widetilde{hU}\}\cup hU=\widetilde{hU}_G,$$
for any $h\in G$.

\begin{proof}[Proof of Theorem \ref{theo:compactification}]
	It is straightforward to see that the inclusions $G\hookrightarrow G\cup M$, $M\hookrightarrow G\cup M$ are topological embeddings.
	
	We now show that $G\cup M$ is compact. We observe that if $(g_n)$ is a convergent sequence in the sense of convergence group with attracting point $a\in M$, then $(g_n x)$ converges to $a$ for any $x\in T$, so $(g_n)$ converges to $a$ in the topology of $G\cup M$.
	Therefore, by definition of convergence group, any sequence $(h_n)\subset G$ has a converging sequence (in the sense of convergence groups) which converges (in the topology of $G\cup M$). Adding that $M$ is also compact, and that it is topologically embedded into $G\cup M$, we get that any sequence $g_n\in G\cup M$ has a converging subsequence. 
	
	For a sequence $(g_n)\subset G$ we have convergence to a point $p\in M$ if and only if $(g_n x)\subset T$ converges to the same point $p\in M$. Hence, by Proposition \ref{prop:convtobound}, random walks on $G$ converge almost surely to points in $M$.
\end{proof}

Whenever $G$ is a hyperbolic group, we can consider a finite set of generators $S$ and the Cayley graph $\Gamma(G,S)$. Then, we can add the Gromov boundary to $\Gamma(G,S)$, getting $\Gamma(G,S)\cup \partial G$, and then we can take the induced topology on $G\cup \partial G$. As changing the generating set $S$ induces a quasi-isometry, this topology on $G\cup \partial G$ does not depend on $S$, so it is well defined. The topology we have explained for a convergence group can be seen as an extension of Gromov's topology. Indeed, we have the following.
\begin{proposition}
	Let $G$ be an hyperbolic group, and assume its Gromov boundary $M$ has more than two points. Then, the topology we obtain on $G\cup M$ by considering $G$ acting as a convergence group on $M$ following the procedure explained in this section coincides with Gromov's topology.
\end{proposition}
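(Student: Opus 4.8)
The plan is to prove that the two topologies on $G\cup M$ have exactly the same convergent sequences; since $M$ is metrizable and $G$ carries the discrete topology, both spaces are first countable (as noted in the proof of the preceding proposition for the convergence-group topology, and because $X\cup\partial X$ is metrizable for Gromov's topology), so a topology on $G\cup M$ is determined by its convergent sequences. Here $X:=\Gamma(G,S)$ is a Cayley graph with respect to a finite generating set: a proper geodesic Gromov hyperbolic space on which $G$ acts by isometries (properly discontinuously and cocompactly, hence as a convergence group on $\partial X=M$ by Proposition \ref{hypgroupsareconve}), and Gromov's topology on $G\cup M$ is by definition the subspace topology from $X\cup\partial X$. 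First I would dispose of the routine cases. A sequence contained in $M$ converges to the same limit in both topologies, since the subspace topology on $M$ is its original topology in each construction; a sequence in $G$ that converges to a point of $G$ is eventually constant in both (each induces the discrete topology on $G$). If a sequence $(g_n)\subset G$ has a subsequence that stays in a ball of $X$, then, $X$ being locally finite, that subsequence has an eventually constant sub-subsequence, so $(g_n)$ converges to a point of $M$ in neither topology; one sees this directly for the convergence-group topology because for a finite set $F\subset G$ and $p\in M$ a small enough neighbourhood $U$ of $p$ makes the fixed triple $gx$ have at most one component in $U$ for every $g\in F$ (two distinct points cannot both lie in arbitrarily small neighbourhoods of $p$). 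So it remains to consider an injective sequence $(g_n)\subset G$ leaving every finite subset of $G$ and, for $p\in M$, to show it converges to $p$ in one topology exactly when it does in the other.

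For this I would invoke the $G$-equivariant map $\pi\colon T\to X$ sending $(a,b,c)$ to a nearest-point projection of $c$ onto a bi-infinite geodesic $[a,b]$, as in Section \ref{seckleinian}; such geodesics exist because $X$ is proper, and $\pi$ is well defined up to bounded error, which is all we use. Unwinding the definition of the topology on $G\cup M$ from Section \ref{sect:sectcompact}, $g_n\to p$ in the convergence-group topology means precisely that for every neighbourhood $U$ of $p$ in $M$ the triple $g_nx$ eventually has two components in $U$, i.e.\ $g_nx\to p$ in Tukia's topology on $T\cup M$. By Lemma \ref{lemma:kleinanconvtobound} together with the converse discussed immediately after it --- stated there for $\overline{B}^{n+1}$ but valid for any proper geodesic hyperbolic space, see below --- a sequence in $T$ satisfies $g_nx\to p$ in Tukia's topology if and only if $\pi(g_nx)\to p$ in $X\cup\partial X$. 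Finally $\pi(g_nx)=g_n\cdot\pi(x)$ by equivariance, and $\pi(x)$ lies at a fixed bounded distance $D$ from the identity vertex $e$; since the Gromov product of a sequence with a boundary point changes by at most $D$ when the sequence is moved by at most $D$, we get $g_n\cdot\pi(x)\to p$ in $X\cup\partial X$ if and only if $g_n=g_n\cdot e\to p$ in $X\cup\partial X$, which is by definition convergence of $g_n$ to $p$ in Gromov's topology on $G\cup M$. Chaining these equivalences, and combining with the routine cases, shows that the two topologies have the same convergent sequences, hence coincide.

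The one point I would actually write out is that Lemma \ref{lemma:kleinanconvtobound} and its converse hold with $\overline{B}^{n+1}$ replaced by an arbitrary proper geodesic Gromov hyperbolic space $X$ (in particular $X=\Gamma(G,S)$, with $M=\partial X$). Their proofs use only the Gromov product estimates, the Morse Lemma, and Proposition \ref{prop:doubletriangle}, all available here; the sole place where the Kleinian geometry intervenes is the remark that geodesics $[a,b]$ and $[c,x_n]$ meet orthogonally at $x_n$, and this is used only to conclude that the nearest-point projection onto $[c,x_n]$ of any point of $[a,b]$ lies within bounded distance of $x_n$ --- a coarse statement that holds in any hyperbolic space. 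Granting this replacement, the rest is bookkeeping, the main such item being the first-countability reduction above. (We also use that the action of a hyperbolic group on its boundary is non-elementary and minimal when $M$ has more than two points, so that the convergence-group construction of the topology genuinely applies and $T\neq\emptyset$.)
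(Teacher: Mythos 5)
Your argument is correct, but it takes a genuinely different route from the paper's. The paper also reduces to sequences, but then argues dynamically: given $(g_n)$ converging to $\lambda$ in the convergence-group topology but (after passing to a subsequence) not in Gromov's topology, it extracts a convergence subsequence $(h_i)$ with attracting point $\lambda$, picks any $\alpha\in M$ away from the repelling point, and combines the reverse triangle inequality with the identity $\d(e,[h_i\alpha,h_i])=\d(h_i^{-1},[\alpha,e])$ to force $h_i^{-1}$ to accumulate on every such $\alpha$ --- impossible since $|M|>2$; the reverse implication then follows from compactness of $G\cup M$ in the convergence-group topology. You instead build a coarse $G$-equivariant conjugacy $\pi\colon T\to\Gamma(G,S)$ and show that both compactifications are pulled back along it, which hinges on generalizing Lemma \ref{lemma:kleinanconvtobound} and its converse from $\overline{B}^{n+1}$ to an arbitrary proper geodesic hyperbolic space. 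You correctly isolate the only Kleinian-specific ingredient (orthogonality of $[a,b]$ and $[c,x_n]$ at $x_n$) and observe that it is used only for the coarse statement that nearest-point projections from $[a,b]$ to $[c,x_n]$ land within bounded distance of $x_n$, which follows from equation \eqref{eq:invtriangle} in any hyperbolic space; the converse direction likewise survives because, whichever two components of the triple lie near $\lambda$, the projection point lies within $C(\delta)$ of a geodesic with both endpoints near $\lambda$. Your route is longer and requires redoing the lemma, but it is more structural --- it exhibits the triple-space projection as the object mediating between the two compactifications --- whereas the paper's proof is shorter and self-contained but leans on the specific north--south dynamics of convergence subsequences. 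The bookkeeping (first countability of both topologies, eventual constancy of bounded subsequences, and the fact that a fixed triple has at most one component in a small enough neighbourhood of $p$) is handled correctly.
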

\begin{proof}
	Both restrictions to $M$ and to $G$ have the same topology in both cases.
	Hence we only have to check if the sequences of $G$ converging to points in $M$ have the same limit in both topologies, as there are no sequences of elements of $M$ converging to elements of $G$.
	
	Consider $(g_n)_{n\in\N}\subset G$ converging to $\lambda\in M$ with the topology of convergence groups and assume the sequence does not converge to $\lambda$ in Gromov's topology.
	Then, given a finite set of generators $S$ there exists a subsequence $(g_{n_k})\subset (g_n)$ such that $(g_{n_k}\cdot \lambda)_e\le K$, where the Gromov product is taken with respect to the path metric on $\Gamma(G,S)$.
	Taking a convergent subsequence (in the sense of convergence groups) we get a subsequence $(h_i)\subset (g_{n_k})$ which, since it converges to $\lambda$ in the convergence group topology, has $\lambda$ as attracting point.
	Take $\alpha\in M$ different from the repelling point of $(h_i)$. Then $h_i\alpha$ converges to $\lambda$, so $(h_i\alpha\cdot \lambda)_e$ goes to infinity.
	Hence, by the reverse triangle inequality,
	$$K\ge (h_i\cdot \lambda)_e\ge \min((h_i\alpha\cdot h_i)_e,(h_i\alpha\cdot \lambda)_e)+C(\delta)=(h_i\alpha\cdot h_i)_e+C(\delta).$$
	Hence, $K\ge \d(e,[h_i\alpha,h_i])+C(\delta)=\d(h_i^{-1},[\alpha,e])+C(\delta)$. Therefore, $(h_i^{-1})$ converges to $\alpha$ in Gromov's topology. As $\alpha$ is can be any point in $M$ (except the repelling point of $(h_i)$) we get a contradiction, so $(g_n)$ converges to $\lambda$ in Gromov's topology.
	
	Assume now that $(g_n)$ converges to $\lambda\in M$ in Gromov's topology but that it does not converge to $\lambda$ in the convergence group topology. 
	Then there exists a subsequence $(g_{n_k})$ which converges to $\lambda'\neq \lambda$ with the convergence group topology, and by the previous paragraph, $(g_{n_k})$ converges to $\lambda'$ with Gromov's topology, giving us a contradiction.
\end{proof}

\subsection{The Dirichlet problem}
We have that $(M,\nu)$ is a $\mu$ boundary of $(G,\mu)$ with $\nu$ being the hitting measure of the random walk $(w_n x)\subset T$. That is, we have $w_n x\to p\in A\subset M$ with probability $\nu(A)$. Hence, for the random walk $(w_n)\subset G$ we have $w_n\to p\in A$ with probability $\nu(A)$, that is, the random walk $w_n$ converges pointwise to a random variable $w_\infty$ on $M$, with distribution $\nu$. We can define the hitting measures of the random walk starting at any $g\in G$ by
$$\nu_g(A):=\P[gw_\infty\in A\mid w_0=e]=\P[w_\infty\in g^{-1}A\mid w_0=e]=g\nu_e(A).$$

Given this setting, a frequent question is whether the \emph{Dirichlet problem at infinity} is solvable, that is, whether every continuous function $f:M\to \R$ admits a continuous extension to $G\cup M$ harmonic on $G$ with respect to the transition probability. For this we will, use the following theorem, a proof of which can be found in \cite[Theorem 20.3]{woess}.
\begin{theorem}
	The Dirichlet problem with respect a measure $\mu$ and a compactification $G\cup B$ of $G$ is solvable if and only if
	\begin{enumerate}
		\item the random walks $(w_n)$ converge almost surely to the boundary $B$;
		\item for the corresponding harmonic measures,
		$$\lim_{g\to p} \nu_g=\delta_p \: \text{weakly for every } p\in B.$$
	\end{enumerate}
\end{theorem}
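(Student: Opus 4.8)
The plan is to prove the two implications separately, using the harmonic measures $\nu_g$ — the law of $gw_\infty$, where $w_\infty$ denotes the almost sure limit of the walk — as the bridge between the analytic side (harmonic extensions of functions on $B$) and the probabilistic side (the asymptotics of $(w_n)$).

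First I would treat the ``if'' direction. Assume (1) and (2); by (1) the limit $w_\infty\in B$ exists almost surely, so each $\nu_g$ is a well-defined Borel probability measure on $B$. Given a continuous $\varphi\colon B\to\R$, set $h_\varphi(g):=\int_B\varphi\,\d\nu_g$ for $g\in G$ and $h_\varphi(p):=\varphi(p)$ for $p\in B$. Harmonicity of $h_\varphi$ on $G$ follows from the one-step decomposition $\nu_g=\sum_{h\in G}\mu(h)\nu_{gh}$, which is just the stationarity of $\nu_e$ (obtained by conditioning on the first increment) transported by the identity $\nu_g=g\nu_e$, since then $h_\varphi(g)=\sum_{h}\mu(h)\int_B\varphi\,\d\nu_{gh}=\sum_h\mu(h)h_\varphi(gh)$. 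Continuity of $h_\varphi$ on $G\cup B$ reduces, since $G$ is discrete and $G\cup B$ is metrizable, to the claim that $h_\varphi(g_n)\to\varphi(p)$ whenever $g_n\in G$ and $g_n\to p$; but $h_\varphi(g_n)=\int_B\varphi\,\d\nu_{g_n}$, and this converges to $\varphi(p)$ precisely because $\nu_{g_n}\to\delta_p$ weakly by (2). Thus $h_\varphi$ solves the Dirichlet problem for $\varphi$.

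Next I would treat the ``only if'' direction, which is the crux. Assume the Dirichlet problem is solvable and, for each continuous $\varphi$, let $h_\varphi$ be its harmonic extension, which is bounded because $G\cup B$ is compact. Then $\bigl(h_\varphi(w_n)\bigr)_n$ is a bounded martingale, hence converges almost surely. Picking a countable dense family $\{\varphi_k\}$ in $C(B)$ — available because $B$ is compact metrizable — almost surely \emph{all} the martingales $h_{\varphi_k}(w_n)$ converge simultaneously. I would then argue that this forces $(w_n)$ itself to converge in $G\cup B$: two distinct subsequential limits $p\neq q$ in $B$ would give, by continuity of each $h_{\varphi_k}$, the contradiction $\varphi_k(p)=\lim_n h_{\varphi_k}(w_n)=\varphi_k(q)$ for all $k$, whence $p=q$ by density together with Urysohn's lemma; a subsequential limit inside the discrete set $G$ is excluded because the walk is transient in the relevant situation. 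This is statement (1). For (2), continuity of $h_\varphi$ along $w_n\to w_\infty$ identifies the almost sure limit of the martingale as $\varphi(w_\infty)$, and uniform integrability of the bounded martingale yields $h_\varphi(g)=\E_g[\varphi(w_\infty)]=\int_B\varphi\,\d\nu_g$, so the Dirichlet solution is necessarily the harmonic-measure integral. Letting $g\to p$ and invoking continuity of $h_\varphi$ on $G\cup B$ then gives $\int_B\varphi\,\d\nu_g\to\varphi(p)$ for every continuous $\varphi$, i.e.\ $\nu_g\to\delta_p$ weakly.

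The main obstacle is upgrading the almost sure convergence of each scalar martingale $h_\varphi(w_n)$ to almost sure convergence of the compactification-valued process $(w_n)$: this needs the separability of $C(B)$ so that countably many test functions suffice, together with the point-separation argument ruling out multiple subsequential limits. The secondary subtlety is the uniform-integrability step identifying $h_\varphi$ with $\int_B\varphi\,\d\nu_g$, which is what converts the analytic continuity of $h_\varphi$ at $B$ into the weak-convergence statement (2).
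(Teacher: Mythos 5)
The paper does not actually prove this statement: it is quoted verbatim from Woess \cite[Theorem 20.3]{woess}, so there is no internal proof to compare against. Your argument is the standard one (and essentially the one in Woess): the ``if'' direction via the harmonic-measure integral $h_\varphi(g)=\int_B\varphi\,\d\nu_g$, whose harmonicity is the one-step identity $\nu_g=\sum_{h}\mu(h)\nu_{gh}$ and whose boundary continuity is exactly condition (2); the ``only if'' direction via bounded martingales $h_\varphi(w_n)$, a countable dense family in $C(B)$ to upgrade scalar convergence to convergence of $(w_n)$ in $G\cup B$, and uniform integrability to identify $h_\varphi(g)=\int_B\varphi\,\d\nu_g$. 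All of these steps are correct, and the use of first countability of $G\cup B$ to reduce continuity to sequential continuity is legitimate in the setting where the paper applies the theorem.

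The one genuine gap is the clause excluding subsequential limits in $G$ ``because the walk is transient in the relevant situation.'' Transience is not among the hypotheses as the statement is written, and without some such hypothesis the ``only if'' direction is false: for a recurrent irreducible walk and a one-point boundary $B$, every continuous function on $B$ is constant, so the Dirichlet problem is trivially solvable, yet $(w_n)$ returns to $e$ infinitely often and does not converge to $B$, so (1) fails. In Woess's book transience is a standing assumption of that section, which is why the citation is harmless; if you want your proof to be self-contained you should either import that hypothesis explicitly or derive it, which you can do when $B$ has at least two points and the walk is irreducible: pick a continuous $\varphi$ separating two boundary points, so that its harmonic extension is a non-constant bounded harmonic function, and recall that an irreducible recurrent chain admits no such function. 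With transience in hand, a subsequential limit $g\in G$ would force $w_{n_k}=g$ eventually (since $\{g\}$ is open), contradicting that $g$ is visited finitely often; the rest of your separation argument then goes through as written.
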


We have already seen that the first requisite is satisfied. For the second one, we just have to observe that every sequence with $g_n\to p$ has a convergent subsequence (in the sense of convergence groups), for which $g_{n_k}\nu$ converges to $\delta_p$.
Therefore, $g_n \nu\to\delta_p$, since every subsequence has a convergent subsequence, and hence we cannot take a fully non converging subsequence. Hence, we get the following.

\begin{proposition}
	Assume $G$ is a non-elementary convergence group acting minimally on a metrizable space $M$. Then, the Dirichlet problem on $G\cup M$ solvable with respect to the topology defined above.
\end{proposition}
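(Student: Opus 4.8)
The plan is to invoke the criterion of Woess quoted immediately above, which reduces solvability of the Dirichlet problem on $G\cup M$ to two conditions: almost sure convergence of the random walk to the boundary $M$, and weak convergence $\nu_g\to\delta_p$ as $g\to p$ for every $p\in M$. The first condition is exactly the content of Theorem~\ref{theo:compactification} (equivalently, of Proposition~\ref{prop:convtobound} together with the identification of convergence in $G\cup M$ with convergence of $(g_n x)$ in Tukia's topology), so nothing new is needed there. It therefore remains to verify the second condition.

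For this I would argue along sequences, which is legitimate since $G\cup M$ is first countable (as observed when proving that the topology is independent of the basepoint). So fix $p\in M$ and a sequence $g_n\to p$ in $G\cup M$; the goal is $g_n\nu\to\delta_p$ weakly. The key move is a subsequence argument. Any subsequence of $(g_n)$ contains a further subsequence $(g_{n_k})$ that converges in the sense of convergence groups, with some repelling point $a$ and attracting point $b$. Because $g_n\to p$ in $G\cup M$ means exactly that $g_n x\to p$ in Tukia's topology for the fixed basepoint $x\in T$, and a convergent (convergence-group) subsequence drags every point of $T$ lying off $a$ to its attracting point, we must have $b=p$. Now use that $\nu$ is non-atomic by \cite[Theorem 9.4]{ilya} and regular (as $M$ is metrizable): given $\epsilon>0$ choose a neighbourhood $V$ of $a$ with $\nu(V)\le\epsilon$; then for any continuous $f$ on $M$ the locally uniform convergence $g_{n_k}|_{M\setminus a}\to p$ gives $\int_M f\,\d(g_{n_k}\nu)\to f(p)$, by exactly the estimate carried out in the proof of Proposition~\ref{prop:convtobound}. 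Hence $g_{n_k}\nu\to\delta_p$ weakly. Since every subsequence of $(g_n\nu)$ admits a further subsequence converging weakly to $\delta_p$, the whole sequence converges: $g_n\nu\to\delta_p$, i.e. $\nu_{g_n}\to\delta_p$. This establishes condition (2), and the proposition follows from Woess's theorem.

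The only genuinely delicate point is the identification $b=p$ of the attracting point of the convergence-group subsequence with the limit $p$ in $G\cup M$; everything else is the weak-convergence computation already done for stationarity. Since that identification amounts to unwinding the definition of the topology on $G\cup M$ — precisely the step performed when showing the random walk converges to $M$ — I would simply appeal to that reasoning rather than repeat it, and the rest is routine.
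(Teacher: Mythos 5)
Your proposal is correct and follows essentially the same route as the paper: both verify Woess's two conditions by citing the already-established boundary convergence for (1) and, for (2), extracting from any subsequence of $(g_n)$ a further subsequence convergent in the convergence-group sense whose attracting point must be $p$, then concluding $g_n\nu\to\delta_p$ by the standard ``every subsequence has a sub-subsequence converging to the same limit'' principle. You simply spell out more explicitly the identification of the attracting point and the weak-convergence estimate (borrowed from the proof of Proposition~\ref{prop:convtobound}) that the paper leaves implicit.
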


If $f:M\to \R$ is the continuous function on the boundary, the extension to $G$ is given by the Poisson formula
$$h(g)=\int_M f(p)g\nu.$$

\subsection{Strongly almost transitive actions}

Let $G$ be a second countable group acting measurably on a standard probability space $(X,\mathcal{B},\nu)$ in such a way that the action preserves the measure class of $\nu$ (that is, for all $g\in G$ and $A\in \mathcal{B}$ we have $\nu(A)=0\iff \nu(gA)=0$). 
We say that the action is \emph{strongly almost transitive} if, given a set $A\subset X$ such that $\nu(A)>0$ and $\epsilon>0$, there exists $g\in G$ such that $\nu(hA)>1-\epsilon$.
That is, the action is strongly almost transitive if every set of positive measure can be blown up to almost full measure. These actions were introduced by Jaworski in \cite{Jaworski}, where he proves the following theorem.

\begin{theorem}[Jaworski]
	Let $(M,\nu)$ be a $\mu$-boundary of $G$. Then, the action of $G$ on $(M,\nu)$ is strongly almost transitive.
\end{theorem}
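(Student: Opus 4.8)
The plan is to turn a translate of $A$ into $\nu(w_n^{-1}A)$ for a well-chosen sample path and time $n$, and then exploit the fact that a $\mu$-boundary concentrates. Write $\P=\mu^{\N}$ for the step space, and for a Borel set $S\subset M$ put $X_n^S(\omega):=(w_n\nu)(S)=\nu(w_n^{-1}S)$. Since $\nu$ is $\mu$-stationary one has $\mu^{*n}*\nu=\nu$, so $\E[X_n^S]=\nu(S)$ for every $n$; conditioning on the first $n$ increments shows $(X_n^S)_n$ is a martingale bounded by $1$, hence $X_n^S\to X_\infty^S$ almost surely with $\E[X_\infty^S]=\nu(S)$. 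By definition of a $\mu$-boundary, $w_n\nu\to\delta_{\xi(\omega)}$ weakly for a.e.\ $\omega$, and averaging continuous functions against $\E[w_n\nu]=\nu$ shows $\xi$ has law $\nu$. It then suffices to produce, for each Borel $A$ with $\nu(A)>0$ and each $\epsilon>0$, a positive-probability event on which some $w_n$ satisfies $\nu(w_n^{-1}A)>1-\epsilon$; taking $g=w_n^{-1}$ on that event gives $\nu(gA)>1-\epsilon$, which is exactly strong almost transitivity.

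First I would replace $A$ by a $\nu$-continuity set. The Borel sets with $\nu$-null boundary form an algebra, and since all but countably many metric balls about a given point have $\nu$-null boundary, this algebra generates the Borel $\sigma$-algebra; hence for any $\delta>0$ there is a continuity set $U$ with $\nu(A\mathbin{\triangle}U)<\delta$. For such $U$, the Portmanteau theorem together with $\P(\xi\in\partial U)=\nu(\partial U)=0$ gives $X_n^U\to\mathbf{1}_U(\xi)$ a.s., whence $\P\big(X_n^U\to 1\big)=\nu(U)\ge\nu(A)-\delta$. For the discrepancy, $X_n^{A\triangle U}\to X_\infty^{A\triangle U}$ a.s.\ with $\E[X_\infty^{A\triangle U}]=\nu(A\mathbin{\triangle}U)<\delta$, so Markov's inequality yields $\P\big(X_\infty^{A\triangle U}\ge\epsilon\big)<\delta/\epsilon$.

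Now choose $\delta$ small enough that $\nu(A)-\delta-\delta/\epsilon>0$; then $E:=\{X_n^U\to 1\}\cap\{X_\infty^{A\triangle U}<\epsilon\}$ has positive probability. On $E$ one has $X_n^A\ge X_n^U-X_n^{U\setminus A}\ge X_n^U-X_n^{A\triangle U}$, so $\liminf_n X_n^A\ge 1-X_\infty^{A\triangle U}>1-\epsilon$, and therefore $\nu(w_n^{-1}A)=X_n^A>1-\epsilon$ for all large $n$; setting $g=w_n^{-1}$ completes the proof. The hard part is precisely this measure-theoretic step: for a general (non-continuity) set $A$, weak convergence gives no direct control of $(w_n\nu)(A)$, so one must go through a continuity set $U$ and then bound the error $(w_n\nu)(A\mathbin{\triangle}U)$ — and the only leverage there is in expectation, via the bounded martingale identity $\E[X_\infty^{A\triangle U}]=\nu(A\mathbin{\triangle}U)$, since no pathwise estimate is available. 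Everything else (stationarity, the martingale property, Portmanteau) is routine. The argument is written for the compact metrizable $M$ at hand; for an arbitrary standard probability space one simply works in a compatible Polish topology.
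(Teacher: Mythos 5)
The paper does not actually prove this statement: it is quoted as Jaworski's theorem and attributed to \cite{Jaworski} with no argument given, so there is no in-paper proof to compare yours against. Judged on its own, your proof is correct and complete. The martingale structure is right: stationarity $\mu*\nu=\nu$ gives $\E[\nu(w_{n+1}^{-1}S)\mid g_1,\dots,g_n]=\sum_h\mu(h)\,\nu(h^{-1}w_n^{-1}S)=\nu(w_n^{-1}S)$, so each $X_n^S$ is a bounded martingale with $\E[X_\infty^S]=\nu(S)$; the identification of the law of the limit point $\xi$ as $\nu$ via $\E[w_n\nu]=\mu^{*n}*\nu=\nu$ and dominated convergence is fine; and you correctly isolate the one genuinely delicate point, namely that weak convergence $w_n\nu\to\delta_\xi$ says nothing directly about $(w_n\nu)(A)$ for an arbitrary Borel $A$, forcing the detour through a $\nu$-continuity set $U$ with $\nu(A\mathbin{\triangle}U)<\delta$ (available since the continuity sets form an algebra generating the Borel $\sigma$-algebra on a separable metric space) and the control of the error term only in expectation via Markov. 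The bookkeeping $X_n^A\ge X_n^U-X_n^{A\triangle U}$ and the choice of $\delta$ with $\nu(A)-\delta-\delta/\epsilon>0$ then give a positive-probability event on which $\nu(w_n^{-1}A)>1-\epsilon$ eventually, and $g=w_n^{-1}$ for a single $\omega$ in that event suffices. This is essentially the classical argument for Jaworski's theorem, so while I cannot certify it matches the source's proof line by line, it is a valid proof of the stated result in the setting the paper uses it (compact metrizable $M$), and your closing remark about transferring to a compatible Polish topology covers the general standard-space formulation.
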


\begin{corollary}
	Let $G$ be a non-elementary, minimal convergence group on a compact metrizable space $M$, and $\mu$ a measure on $G$ such that its support generates $G$. Then, there exists a measure $\nu$ such that the action on the probability space $(M,\nu)$ is strongly almost transitive. 
\end{corollary}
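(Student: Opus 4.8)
The plan is to take $\nu$ to be the $\mu$-stationary Borel probability measure on $M$ and to apply Jaworski's theorem (quoted just above) to the pair $(M,\nu)$. So the corollary will be an immediate consequence of two facts already available: that $(M,\nu)$ is a $\mu$-boundary of $(G,\mu)$, and Jaworski's theorem that every $\mu$-boundary carries a strongly almost transitive action.

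First I would produce the measure $\nu$. Since $M$ is compact and metrizable, a standard weak-$*$ compactness argument (Furstenberg's averaging trick) yields a $\mu$-stationary Borel probability measure $\nu$ on $M$; its existence and uniqueness are moreover recorded in \cite[Theorems 9.7 and 9.8]{ilya}. The substantive point I would then recall is that $(M,\nu)$ is a $\mu$-boundary of $(G,\mu)$: condition (1) in the definition is stationarity, which holds by construction, and condition (2) holds because almost every sample path $(w_n)$ converges to a point of $M$ --- this is part of Theorem \ref{theo:compactification}, proved via Proposition \ref{prop:convtobound} --- so that the translated measures $(w_n\nu)$ converge weakly to the Dirac mass at that limit point. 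Equivalently, one may simply cite \cite{ilya}, where this is exactly Theorems 9.7 and 9.8.

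Finally I would invoke Jaworski's theorem: $G$ is countable, hence second countable as a discrete group, and $(M,\nu)$ is a standard probability space because $M$ is compact metrizable, so the hypotheses are satisfied, and the action of $G$ on $(M,\nu)$ is therefore strongly almost transitive, which is the assertion of the corollary. There is no genuine analytic obstacle here: all of the work has been done in the cited results and in the earlier sections of the paper. The only thing to be careful about is bookkeeping --- matching the framework of \cite{Jaworski} exactly (in particular recording that the $G$-action on $(M,\nu)$ fits the setup in which "strongly almost transitive" is defined) --- rather than any new estimate.
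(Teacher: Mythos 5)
Your proposal is correct and matches the paper's (implicit) argument exactly: the paper takes $\nu$ to be the unique $\mu$-stationary measure, relies on the fact already recorded at the start of Section 3 (via \cite[Theorems 9.7 and 9.8]{ilya}) that $(M,\nu)$ is a $\mu$-boundary, and then applies Jaworski's theorem. There is nothing further to add.
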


This result is in fact a consequence of previous results by Gekhtman, Gerasimov, Potyagailo and Yang \cite{ilya}.
We refer to the paper by Glasner and Weiss, \cite{glasner}, for a recompilation of some implications of having a nontrivial strongly almost transitive action. We write here one of the consequences explained in that paper, which we find particularly interesting.

\begin{corollary}[of {\cite[Proposition 4.3]{glasner}}]
	Let $G$ be non-elementary, minimal convergence group on a compact metrizable space $M$. Then, there is no non-constant Borel measurable equivariant map $\phi:M\to Z$, where $(Z,d)$ is a separable metric space on which $G$ acts by isometries (that is, $M$ is \emph{ergodic with isometric coefficients}).
\end{corollary}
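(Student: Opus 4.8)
The plan is to obtain this immediately from the previous corollary together with \cite[Proposition~4.3]{glasner}, so almost all of the work has already been done. First I would fix a probability measure $\mu$ on $G$ whose support generates $G$ (such a $\mu$ exists since $G$ is countable) and let $\nu$ be the associated $\mu$-stationary Borel probability measure on $M$, so that $(M,\nu)$ is a $\mu$-boundary of $(G,\mu)$. The preceding corollary, which is Jaworski's theorem applied to this $\mu$-boundary, then says that the action of $G$ on $(M,\nu)$ is strongly almost transitive. I would also record two auxiliary facts: $\nu$ is non-atomic by \cite[Theorem~9.4]{ilya}, so the action is \emph{nontrivial} in the sense of \cite{glasner}; and, by minimality, the support of $\nu$ is a closed $G$-invariant subset of $M$ and hence all of $M$.

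With this in place the corollary is a direct invocation of \cite[Proposition~4.3]{glasner}: a nontrivial strongly almost transitive action is \emph{ergodic with isometric coefficients}, i.e.\ for every separable metric space $(Z,d)$ carrying an isometric $G$-action, every $G$-equivariant Borel map $\phi\colon M\to Z$ is $\nu$-essentially constant. This is exactly the statement that there is no non-constant equivariant Borel map $M\to Z$, the parenthetical remark in the corollary being just a name for this property. (If one prefers to eliminate the reference to $\nu$: an essentially constant equivariant map $\phi\equiv z_{0}$ forces $g z_{0}=z_{0}$ for all $g\in G$ by equivariance and quasi-invariance of $\nu$, so $z_{0}$ is a global fixed point and $\phi^{-1}(z_{0})$ is a dense $G$-invariant Borel set of full measure; the measure-theoretic statement is the one intended.)

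I do not expect a genuine obstacle here: the substance is entirely in producing the strongly almost transitive action, which is done in the previous corollary via Theorem~\ref{theo:poissboundary} and Jaworski's theorem. The only points to verify are that the hypotheses of \cite[Proposition~4.3]{glasner} hold — strong almost transitivity and nontriviality of the action — and both are immediate from the previous corollary and the non-atomicity of $\nu$. The one subtlety worth flagging is that \cite[Proposition~4.3]{glasner} gives constancy only up to $\nu$-null sets; upgrading this to constancy everywhere is false in general, so the assertion should be (and is) read in the measure-theoretic category.
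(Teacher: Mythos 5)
Your proposal is correct and follows exactly the route the paper intends: the previous corollary (Jaworski's theorem applied to the $\mu$-boundary $(M,\nu)$) supplies the strongly almost transitive action, and the statement is then a direct citation of \cite[Proposition 4.3]{glasner}; the paper gives no further argument. Your additional remarks on nontriviality via non-atomicity of $\nu$, full support by minimality, and the measure-theoretic reading of ``non-constant'' are sensible clarifications but do not change the approach.
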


\subsection{$F_\mu$-proximality}
Given a measure $\mu$ on a discrete countable group $G$, we can define the Ces\`aro averages $\mu_n:=\frac{1}{n}(\mu+\mu^2+\ldots+\mu^n)$. We say that a compact metric $G$-space $X$ is \emph{$F_\mu$-proximal} if for each $x,y\in X$, $\mu_n\{g:\d(gx,gy)>\epsilon\}\to 0$ as $n\to \infty$ for any $\epsilon>0$. Furstenberg introduced this notion in \cite{Furstenberg3}, where he also shows (among other equivalences, see Theorem 14.1 of that same article; see also \cite[Theorems 8.4 and 8.5]{glasner} for a slightly larger list) that $X$ is $F_\mu$-proximal if and only if for any $\mu$-stationary Borel probability measure $\nu$ on $X$, the couple $(X,\nu)$ is a $\mu$-boundary of $G$. This is indeed the case for convergence groups, so we get the following result, as it is shown in \cite{ilya}.
\begin{corollary}
	Let $G$ be a non-elementary, minimally convergence group on a compact metrizable space $M$, and $\mu$ a measure on $G$ such that its support generates $G$. Then, $M$ is $F_\mu$-proximal.
\end{corollary}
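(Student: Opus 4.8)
The plan is to apply the characterization of $F_\mu$-proximality recalled immediately above the statement: a compact metric $G$-space $X$ is $F_\mu$-proximal if and only if, for \emph{every} $\mu$-stationary Borel probability measure $\nu$ on $X$, the couple $(X,\nu)$ is a $\mu$-boundary of $(G,\mu)$. Since $M$ is compact metrizable and $G$ acts on it by homeomorphisms (it is a convergence action), the only thing left to check is the $\mu$-boundary property for an arbitrary stationary $\nu$, and the ingredients for this are already assembled in Section \ref{sect:randomwalks} and Section \ref{sect:convergencegroups}.

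First I would reduce the universal quantifier to a single measure: by \cite[Theorems 9.7 and 9.8]{ilya} the $\mu$-stationary Borel probability measure $\nu$ on $M$ is unique, so ``for every $\mu$-stationary $\nu$'' simply means ``for the one and only $\nu$''. The same reference moreover asserts that this $(M,\nu)$ is a $\mu$-boundary of $(G,\mu)$. Feeding these two facts into Furstenberg's equivalence yields $F_\mu$-proximality at once, which is precisely the deduction made in \cite{ilya}.

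For a route that does not quote the $\mu$-boundary conclusion verbatim, I would instead verify the two defining conditions of a $\mu$-boundary directly for a stationary measure $\nu$. Condition (1), that $\nu$ is $\mu$-stationary, holds by assumption. For condition (2), a standard martingale argument gives that the measures $(w_n\nu)$ converge weakly almost surely to some random limit $\nu_\infty^\omega$, and one must show $\nu_\infty^\omega$ is almost surely a point mass. Passing to a subsequence $(w_{n_k})$ which, in the convergence-group sense, has repelling point $a(\omega)$ and attracting point $b(\omega)$, non-atomicity of $\nu$ (\cite[Theorem 9.4]{ilya}) gives $\nu(\{a(\omega)\})=0$, so local uniform convergence of $w_{n_k}|_{M\setminus a(\omega)}$ to $b(\omega)$ forces $w_{n_k}\nu\to\delta_{b(\omega)}$ weakly; since the full sequence $w_n\nu$ already converges, the limit $\nu_\infty^\omega$ must equal $\delta_{b(\omega)}$. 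Hence $(M,\nu)$ is a $\mu$-boundary for every stationary $\nu$, and Furstenberg's equivalence concludes.

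There is no real obstacle in either route; the only point that genuinely requires a cited input is the non-atomicity of the (unique) stationary measure, supplied by \cite[Theorem 9.4]{ilya} for non-elementary convergence actions, while everything else is bookkeeping of definitions already collected in the preliminaries.
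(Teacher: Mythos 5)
Your first route is exactly the paper's argument: the paper deduces the corollary by combining Furstenberg's equivalence with the fact (from \cite{ilya}, already quoted at the start of Section 3) that the unique $\mu$-stationary measure makes $(M,\nu)$ a $\mu$-boundary. The proposal is correct and takes essentially the same approach; the direct verification you sketch as a second route is a sound but unnecessary elaboration of what the citation already supplies.
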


\bibliographystyle{amsplain}
\bibliography{rw}{}

\end{document}